\theoremstyle{definition}
 \newtheorem{dfn}{Definition}
 \newtheorem{remark}[dfn]{Remark}
\theoremstyle{plain}
 \newtheorem{thm}[dfn]{Theorem}
 \newtheorem{lem}[dfn]{Lemma}
\numberwithin{equation}{section}
\newcommand{\bn}{{\bold n}}
\newcommand{\bd}{{\bold d}}
\newcommand{\bk}{{\bold k}}
\newcommand{\bu}{{\bold u}}
\newcommand{\bz}{{\bold z}}
\newcommand{\bv}{{\bold v}}
\newcommand{\bw}{{\bold w}}
\newcommand{\ba}{{\bold a}}
\newcommand{\bb}{{\bold b}}
\newcommand{\bff}{{\bold f}}
\newcommand{\bA}{{\bold A}}
\newcommand{\bB}{{\bold B}}
\newcommand{\bD}{{\bold D}}
\newcommand{\bH}{{\bold H}}
\newcommand{\bI}{{\bold I}}
\newcommand{\bK}{{\bold K}}
\newcommand{\bN}{{\bold N}}
\newcommand{\bV}{{\bold V}}
\newcommand{\bT}{{\bold T}}
\newcommand{\bU}{{\bold U}}
\newcommand{\DV}{{\rm Div}\,}
\newcommand{\dv}{{\rm div}\,}
\newcommand{\BR}{{\Bbb R}}
\newcommand{\BC}{{\Bbb C}}
\newcommand{\BN}{{\Bbb N}}
\newcommand{\CA}{{\mathcal A}}
\newcommand{\CB}{{\mathcal B}}
\newcommand{\CC}{{\mathcal C}}
\newcommand{\CD}{{\mathcal D}}
\newcommand{\CE}{{\mathcal E}}
\newcommand{\CF}{{\mathcal F}}
\newcommand{\CK}{{\mathcal K}}
\newcommand{\CL}{{\mathcal L}}
\newcommand{\CN}{{\mathcal N}}
\newcommand{\CR}{{\mathcal R}}
\newcommand{\CS}{{\mathcal S}}
\newcommand{\CT}{{\mathcal T}}
\newcommand{\CH}{{\mathcal H}}
\newcommand{\CP}{{\mathcal P}}
\newcommand{\CX}{{\mathcal X}}
\newcommand{\fq}{{\frak q}}
\newcommand{\fp}{{\frak p}}
\newcommand{\bg}{{\bold g}}
\newcommand{\bh}{{\bold h}}
\newcommand{\pd}{\partial}
\newcommand{\curl}{{\rm curl\,}}
\newcommand{\HS}{\BR^N_+}
\newcommand{\rot}{{\rm rot}\,}
\begin{document}

\title{\bf Local Well-posedness for Free Boundary Problem of Viscous
 Incompressible Magnetohydrodynamics}
\author{Kenta {Oishi} \thanks{
Department of Graduate School of Mathematics, Nagoya University \endgraf 
Furocho, Chikusaku, Nagoya, Aichi 464-8602 Japan. 
email address: m16011b@math.nagoya-u.ac.jp
}\enskip and 
\enskip  Yoshihiro Shibata
\thanks{Department of Mathematics,  Waseda University, 
\endgraf
Department of Mechanical Engineering and Materials Science,
University of Pittsburgh, USA \endgraf
mailing address: 
Department of Mathematics, Waseda University,
Ohkubo 3-4-1, Shinjuku-ku, Tokyo 169-8555, Japan. \endgraf
e-mail address: yshibata325@gmail.com \endgraf
Partially supported by Top Global University Project, 
JSPS Grant-in-aid for Scientific Research (A) 17H0109, and Toyota
 Central Research Institute Joint Research Fund }
%\enskip and \enskip So Yanaihara 
%\thanks{Master course of Mathematics and Applied Mathematics,
%Waseda University, 
%\endgraf
%Ohkubo 3-4-1, Shinjuku-ku, Tokyo 169-8555, Japan. \endgraf
%e-mail address: yanaihara@suou.waseda.jp}
}
\date{}
\maketitle

\begin{abstract}
In this paper, we consider the motion of incompressible magnetohydrodynamics (MHD) with 
resistivity in a  domain bounded by a free surface. 
An electromagnetic field generated by
some currents in an external domain keeps an MHD flow in a bounded domain.
On the free surface, free boundary conditions 
for MHD flow and transmission conditions for 
electromagnetic fields are imposed. We proved the local well-posedness in
 the general setting of 
domains from a mathematical point of view. The solutions are obtained in an 
anisotropic space $H^1_p((0, T), H^1_q) \cap L_p((0, T), H^3_q)$ for the velocity field and in an anisotropic space
$H^1_p((0, T), L_q) \cap L_p((0, T), H^2_q)$ for the magnetic fields with $2 < p < \infty$, $N < q < \infty$
and $2/p + N/q <1$. To prove our main result, we used $L_p$-$L_q$ maximal regularity theorem
for the Stokes equations with free boundary conditions and for the magnetic field equations with 
transmission conditions which have been obtained in \cite{FS1, S1, S2}.   
\end{abstract}
{\bf 2010 Mathematics Subject Classification}:  35K59, 76W05 \\
{\bf Keywords}: free boundary problem, transmission condition, magnetohydorodynamics, local well-posedness, 
$L_p$-$L_q$ maximal regularity, 

\section{Introduction}\label{sec:1} 
In this paper, we prove the local well-posedness of a free boundary problem for the 
viscous non-homogeneous incompressible magnetohydrodynamics.  The problem here is 
formulated as follows:  
%%%%%%%%%%%%%%%%%%%%%%%%%%%%%%
Let $\Omega_+$ be a domain  in 
 the $N$-dimensional Euclidean space
$\BR^N$ ($N \geq 2$), and let $\Gamma$ be the boundary of $\Omega_+$.
Let $\Omega_-$ be also a domain in $\BR^N$ whose boundary is 
$\Gamma$ and $S_-$.  We assume that $\Omega_+ \cap \Omega_-
=\emptyset$.   Throughout the paper, we assume 
that $\Omega_\pm$ are uniform $C^2$ domains, that the weak Dirichlet problem
is uniquely solvable in $\Omega_+$ 
\footnote{The definition of uniform $C^2$ domains and 
the weak Dirichlet problem will be given in Sect. \ref{sec:3} below.}  and  that 
${\rm dist}\,(\Gamma, S_-) \geq {2}d_-$ with some positive constants $d_-$, where
 ${\rm dist}(A, B)$ denotes the 
distance of any two subsets $A$ and $B$ of $\BR^N$
 defined by setting ${\rm dist}(A, B)
= \inf\{|x-y| \mid x \in A, y \in B\}$.
Let $\Omega = \Omega_+ \cup \Gamma \cup \Omega_-$
and $\dot\Omega = \Omega_+ \cup \Omega_-$. 
The boundary of $\Omega$ is $S_-$.  We may consider 
the case that $S_-$ is an empty set, and in this case 
$\Omega= \BR^N$.  Physically, we consider the case where
 $\Omega_+$ is filled by a non-homogeneous 
incompressible magnetohydrodynamic (MHD) fluid
 and $\Omega_-$ is filled by 
an insulating gas.  
We consider a motion of an MHD fluid in a time dependent domain $\Omega_{t+}$ 
whose boundary is $\Gamma_t$ 
subject to an electomagnetic field generated in a domain 
$\Omega_{t-} = \Omega\setminus
(\Omega_{t+} \cup\Gamma_t)$ by some currents located 
on a fixed boundary $S_-$ of $\Omega_{t-}$. 
Let $\bn_t$ be the unit  outer normal to
$\Gamma_t$ oriented from $\Omega_{t+}$ into $\Omega_{t-}$, and 
let $\bn_-$ be  respective the unit outer normals to $S_-$. 
 Given any functions, $v_\pm$,  defined on $\Omega_{t\pm}$, $v$ is
defined by $v(x) = v_{\pm}(x)$ for $x \in \Omega_{t\pm}$ for $t \geq 0$,
where $\Omega_{0\pm} = \Omega_\pm$. Moreover, what
$v = v_\pm$ denotes that $v(x) = v_+(x)$ for $x \in \Omega_{t+}$ and 
$v(x) = v_{t-}(x)$ for $x \in \Omega_{t-}$.  Let
$$[[v]](x_0) = \lim_{x\to x_0 \atop x \in \Omega_{t+}}v_+(x)
- \lim_{x\to x_0 \atop x \in \Omega_{t-}}v_-(x)
$$
for every point $x_0 \in \Gamma_t$, which is the jump quantity of $v$ across
$\Gamma$.

The purpose of this paper is to prove the local well-posedness of the  
free boundary problem  formulated by the set of
 the following equations:
\begin{equation}\label{mhd.1}\begin{aligned}
\rho(\pd_t\bv + \bv\cdot\nabla\bv) - \DV(\bT(\bv, \fp) + \bT_M(\bH_+))  = 0,
\quad \dv\bv= 0&
&\quad &\text{in $\Omega^T_+$}, 
 \\
\mu_+\pd_t\bH_+ + \DV\{\alpha_+^{-1}\curl\bH_+ - \mu_+
(\bv\otimes\bH_+ - \bH_+\otimes\bv)\} = 0,
\quad \dv \bH_+ = 0&&\quad &\text{in $\Omega^T_+$}, 
 \\
\mu_-\pd_t\bH_- + \DV\{\alpha_-^{-1}\curl\bH_-\} = 0,
\quad \dv \bH_- = 0&&\quad &\text{in $\Omega^T_-$}, 
 \\
(\bT(\bv, \fp)+ \bT_M(\bH_+))\bn_t = 0, 
\quad V_{\Gamma_t}  = \bv\cdot\bn_t& 
&\quad &\text{on $\Gamma^T$}, 
 \\
[[(\alpha^{-1}\curl\bH) \bn_t]]- \mu_+(\bv\otimes\bH_+ - \bH_+\otimes\bv)
\bn_t = 0&
&\quad &\text{on $\Gamma^T$},   \\
[[\mu\bH\cdot\bn_t]]=0,
\quad [[\bH-<\bH, \bn_t>\bn_t]]=0&
&\quad &\text{on $\Gamma^T$},  \\
%\bv=0, \quad \bn_+\cdot\bH_+ =0, \quad
%(\curl\bH_+)\bn_+ = 0&
%&\quad&\text{on $S_+^T$}, \\
\bn_-\cdot\bH_- =0, \quad
(\curl\bH_-)\bn_- = 0&
&\quad&\text{on $S_-^T$}, \\
(\bv, \bH_+)|_{t=0}  = (\bv_0, \bH_{0+}) \quad \text{in $\Omega_{+}$}, 
\quad \bH_{-}|_{t=0} = \bH_{0-}&&\quad&\text{in $\Omega_{-}$}.
\end{aligned}\end{equation}
Here, 
$$\Omega^T_\pm = \bigcup_{0 < t < T}\, \Omega_{t\pm} \times\{t\}, \quad 
\Gamma^T = \bigcup_{0 < t < T}\, \Gamma_t \times\{t\},
\quad {S_-^T} = {S_-} \times (0, T);
$$
$\bv  = (v_1(x, t), \ldots, v_N(x, t))^\top$
is the velocity vector field, where $M^\top$ stands for the 
transposed $M$, $\fp = \fp(x, t)$ the pressure fields, and 
$\bH = \bH_\pm = (H_{\pm 1}(x, t), \ldots, H_{\pm N}(x, t))^\top$
the magnetic vector field.  The $\bv$, $\fp$, and $\bH$  are unknows,
while $\bv_0$ and  $\bH_0$  are prescribed $N$-component vectors
of functions.
As for the remaining symbols,  
$\bT(\bv, \fp) = \nu\bD(\bv)-\fp\bI$ is the viscous stress
tensor, $\bD(\bv) = \nabla \bv
+ (\nabla\bv)^\top$ is the doubled deformation tensor whose
$(i, j)$th component is $\pd_j v_{i} + \pd_i v_{j}$ with 
$\pd_i = \pd/\pd x_i$, $\bI$ the $N\times N$ unit matrix, 
$\bT_M(\bH_+) = \mu_+(\bH_+\otimes\bH_+
- \frac12|\bH_+|^2\bI)$
the magnetic stress tensor, $\curl \bv = 
 {(\nabla \bv)^\top - \nabla \bv}$ the doubled rotation
tensor whose $(i,j)$th component is $\pd_j v_{i} - \pd_i v_{j}$,
$V_{\Gamma_t}$ the velocity of the evolution of $\Gamma_t$ in the direction of 
$\bn_t$.
Moreover,  
$\rho$, $\mu_\pm$, $\nu$, and $\alpha_\pm$ are positive
constants describing respective the mass density, 
the magnetic permability,  the kinematic 
viscosity, and the conductivity. Finally, for any
matrix field $\bK$ with $(i, j)$th component 
$K_{ij}$, the quantity $\DV K$ is an $N$-vector of functions
with the $i$th  component $\sum_{j=1}^N \pd_jK_{ij}$, 
and for any $N$-vectors of functions $\bu = (u_1, \ldots,
u_N)^\top$ and $\bw = (w_1,\ldots, w_N)^\top$,  
$\dv\bu = \sum_{j=1}^N\pd_ju_j$,  
$\bu\cdot\nabla\bw$ is an $N$-vector of functions
with  the $i$th component  $\sum_{j=1}^N u_j\pd_jw_i$,
and $\bu\otimes\bw$ an $N\times N$ matrix with the $(i, j)$th component
 $u_iw_j$. We notice that  in the three dimensional case 
\begin{equation}\label{lap:1}\begin{split}
&\Delta \bv = -\DV\curl \bv + \nabla\dv \bv, 
\quad 
\DV(\bv\otimes\bH- \bH\otimes\bv)
= \bv\dv\bH - \bH\dv \bv + \bH\cdot\nabla\bv - \bv\cdot\nabla\bH, \\
&\rot\rot \bH = \DV\curl\bH, \quad
\rot(\bv\times\bH)= \DV(\bv\otimes\bH - \bH\otimes\bv), 
\end{split}\end{equation}
where $\times$ is the exterior product. 
In particular, in the three dimensional case, the set of equations 
for the  magnetic vector field in equations \eqref{mhd.1} are written by
\begin{alignat*}2
\mu_+\pd_t\bH_+ + \rot(\alpha_+^{-1}\rot\bH_+ - \mu_+\bv\times\bH_+)
=0, \quad\dv \bH_+ = 0& &\quad &\text{in $\Omega^T_+$}, \nonumber \\ 
\mu_-\pd_t\bH_- + \rot(\alpha_-^{-1}\rot\bH_- )
=0, \quad\dv \bH_- = 0& &\quad &\text{in $\Omega^T_-$}, \nonumber \\ 
[[\bn_t\times (\alpha^{-1}\rot\bH)]]- \bn_t\times (\mu_+\bv\times\bH_+)  = 0, \quad 
[[\mu\bH\cdot\bn_t]]=0,
\quad [[\bH-<\bH, \bn_t>\bn_t]]=0&
&\quad &\text{on $\Gamma^T$}. 
\end{alignat*}
This is a standard description, and so the set of equations for the magnetic field in equations \eqref{mhd.1} is 
the $N$-dimensional mathematical description of equations for the
 magnetic vector field with transmission 
conditions.

In equations \eqref{mhd.1}, there is one equation for the magnetic fields
$\bH_\pm$ too many, so that in this paper instead of \eqref{mhd.1}, 
we consider the following equations:
\begin{equation}\label{mhd.2}\begin{aligned}
\rho(\pd_t\bv + \bv\cdot\nabla\bv) - \DV(\bT(\bv, \fp) + \bT_M(\bH_+))  = 0,
\quad \dv\bv= 0&
&\quad &\text{in $\Omega^T_+$},
 \\
\mu_+\pd_t\bH_+ - \alpha^{-1}_+\Delta \bH_+-\DV\mu_+(\bv\otimes\bH_+ - \bH_+\otimes\bv) = 0
&&\quad&\text{in $\Omega^T_+$},  \\
\mu_-\pd_t\bH_- - \alpha^{-1}_- \Delta \bH_- = 0 &&\quad&\text{in $\Omega^T_-$}, 
 \\
(\bT(\bv, \fp)+ \bT_M(\bH_+))\bn_t = 0, \quad {V_{\Gamma_t} = \bv\cdot\bn_t}&&
\quad&\text{on $\Gamma^T$}, \\
[[(\alpha^{-1}\curl\bH)\bn_t]] - \mu_+(\bv\otimes\bH_+ - \bH_+\otimes\bv)
\bn_t  = 0, \quad [[\mu\dv\bH]]  =0&
&\quad &\text{on $\Gamma^T$}, 
\\
[[\mu\bH\cdot\bn_t]]=0,
\quad [[\bH-<\bH, \bn_t>\bn_t]]=0&
&\quad &\text{on $\Gamma^T$}, 
\\
%\bv=0,\quad \bn_+\cdot\bH_+ =0, \quad
%(\curl\bH_+)\bn_+ = 0&
%&\quad&\text{on $S_+^T$}, 
%\\
\bn_-\cdot\bH_- =0, \quad
(\curl\bH_-)\bn_- = 0&
&\quad&\text{on $S_-^T$}, 
\\
(\bv, \bH_+)|_{t=0}  = (\bv_0, \bH_{0+}) \quad \text{in $\Omega_{+}$}, 
\quad \bH_{t-}|_{t=0} = \bH_{0-}&&\quad&\text{in $\Omega_-$}. 
\end{aligned}\end{equation}
Namely, two equations: $\dv \bH_\pm = 0$ in $\Omega_\pm^T$ is replaced with 
one transmission condition:
 $[[\mu\dv\bH]]=0$ on $\Gamma^T$. 
 Employing the same argument as in Frolova and Shibata \cite[Appendix]{FS1},  
 we see that in equations \eqref{mhd.2}
 if $\dv\bH= 0$ initially, then $\dv \bH=0$ in $\dot\Omega$ 
follows automatically for any $t > 0$ as long as solutions
exist.  Thus, the local well-posedness of equations \eqref{mhd.1} 
follows from that of equations \eqref{mhd.2} provided that
the initial  data $\bH_{0\pm}$ satisfy the divergence zero condition:
$\dv \bH_{0\pm} = 0$.
%, which is a compatibility condition. 
This paper devotes to proving the local well-posedness of equations
\eqref{mhd.2} in the maximal $L_p$-$L_q$ regularity framework.

The MHD equations can be found in \cite{C, LL}. 
The solvability of MHD equations was first obtained 
by Ladyzhenskaya and Solonnikov \cite{LS}.
The initial-boundary value problem for MHD equations 
with non-slip conditions for the velocity vector field 
and perfect wall conditions for the magnetic vector field was studied by 
Sermange and Temam \cite{ST}
in a bounded domain and by Yamaguchi \cite{Y} in an exterior domain. 
In their studies \cite{ST, Y}, the boundary is fixed. 
On the other hand, 
in the field of engineering, when a thermonuclear reaction is caused artificially,
a high-temperature plasma is sometimes subjected 
to a magnetic field and held in the air, 
and the boundary of the fluid at this time is a free one. From this point of view,
free boundary problem for MHD equations is important. The local well-posedness
for free boundary problem for MHD equations was first proved by Padula and Solonnikov
\cite{PS} in the case where $\Omega_{t+}$ is a bounded domain surrounded by
a vacuum area, $\Omega_{t-}$. In \cite{PS}, 
the solution was obtained in Sobolev-Slobodetskii
spaces in the $L_2$ framework of fractional order greater than 2.  Later on,
the global well-posedness was proved by Frolova \cite{Frolova1} and Solonnikov
and Frolova \cite{SE1}.  Moreover, the $L_p$ approach to the same problem
was done by Solonnikov {\cite{Sol15, Sol14-2}}.  When $\Omega_{t+}$ is a bounded 
domain which is surrounded by an electromagnetic field generated in a domain,  
$\Omega_{t-}$, Kacprzyk proved the local well-posedness in \cite{K1} and global
well-posedness in \cite{K2}. In \cite{K1, K2}, the solution 
was also obtained in Sobolev-Slobodetskii
spaces in the $L_2$ framework of fractional order greater than 2.

Recently, the $L_p$-$L_q$ maximal regularity theorem 
for the initial boundary value problem of the system
of parabolic equations with non-homogeneous boundary conditions 
have been studied   
by using  $\CR$-solver  in \cite{S20} and references therein and 
by using $H^\infty$ calculus in \cite{Pr-Sim} and references therein.  They are completely different approaches.  
In particular, Shibata \cite{S1, S2} proved the $L_p$-$L_q$ maximal regularity for the Stokes equations with 
non-homogeneous free boundary conditions by $\CR$-solver theory and 
Frolova and Shibata \cite{FS1} proved it for linearized equations for the magnetic vecor fields 
with transmission conditions
on the interface and {perfect} wall conditions on the fixed boundary arising in the study of 
 two phase problems for the MHD flows  also 
by using  $\CR$-solver.  The results in \cite{S1, S2, FS1} enable us to prove the 
local well-posedness for equations \eqref{mhd.2} in the $L_p$-$L_q$ maximal 
regularity class. 

Aside from dynamical boundary conditions on $\Gamma_t$, a kinematic condition, $V_{\Gamma_t} = \bv\cdot
\bn_t$, is satisfied on $\Gamma_t$, which represents $\Gamma_t$ as a set of 
points $x = x(\xi, t)$ for $\xi \in \Gamma$, where $x(\xi, t)$ is the solution of 
the Cauchy problem:
\begin{equation}\label{kin:1}
\frac{dx}{dt} = \bv(x, t) ,\quad x|_{t=0}= \xi.
\end{equation}
This expresses the fact that the free surface $\Gamma_t$ consists for all $t > 0$ of the 
same fluid particules, which do not leave it and are not incident on it 
from inside $\Omega_{t+}$. Problem \eqref{mhd.2} can be written as an initial-boundary problem with
transmission conditions on $\Gamma$ if we go over the 
Euler coordinates $x \in \dot\Omega_t = \Omega_{t+} \cup \Omega_{t-}$ to the 
Lagrange coordinates $\xi \in \dot \Omega = \Omega_+ \cup \Omega_-$ connected with
$x$ by \eqref{kin:1}. Since the velocity field, $\bu_+(\xi, t) =\bv(x, t)$,  is given only in $\Omega_+$, 
we extend it to $\bu_-$ defined on $\Omega_-$ in such a way that 
\begin{equation}\label{ext:1}\begin{aligned}
\lim_{\xi\to \xi_0 \atop \xi \in \Omega_+} {\pd_\xi^\alpha}\bu_+(\xi, t) 
&= \lim_{\xi\to \xi_0 \atop \xi\in \Omega_-} {\pd_\xi^\alpha} \bu_-(\xi, t) 
\quad\text{for $|\alpha| \leq 3$ and $(\xi_0,  t) \in \Gamma\times(0, T)$}, \\
\|\bu_-(\cdot, t)\|_{H^i_q(\Omega_-)} & \leq C_q\|\bu_+(\cdot, t)\|_{H^i_q(\Omega_+)}
\quad\text{for $i=0, 1, 2, 3$ and $t \in (0, T)$.}
\end{aligned}\end{equation}
Let $\varphi(\xi)$ be a $C^\infty(\BR^N)$ function which equals $1$ when 
${\rm dist}\,(\xi, S_-) \geq 2d_-$ and equals $0$ 
when ${\rm dist}\,(\xi, S_-) \leq  d_-$.
%where $d_0$ is a positive constant such 
%that ${\rm dist}\,(\Gamma, S_\pm) \geq 3d_0$. 
The connection between Euler coordinates $x$ and Lagrangian coordinates $\xi$ is
defined by setting 
\begin{equation}\label{kin:3}
x = \xi + \varphi(\xi)\int^t_0 \bu(\xi, s)\,ds = X_\bu(\xi, t).
\end{equation}
{Define $\fq_+(\xi,t):=\fp(x,t)$ and $\tilde{\bH}(\xi,t)=\tilde{\bH}_\pm(\xi,t)={\bf H}_\pm(x,t)$.} 
Problem \eqref{mhd.2}  is transformed by \eqref{kin:3} to 
the following equations:
\begin{equation}\label{mhd.3}\begin{aligned}
\rho\pd_t\bu_+ - \DV \bT(\bu_+, \fq_+) = \bN_1(\bu_+, \tilde\bH_+)
&&\quad&\text{in $\Omega_+\times(0, T)$}, \\
\dv \bu_+ = N_2(\bu_+) = \dv \bN_3(\bu_+) 
&&\quad&\text{in $\Omega_+\times(0, T)$},\\
T(\bu_+, \fq_+)\bn = \bN_4(\bu_+, \tilde\bH_+)
&&\quad&\text{on $\Gamma\times(0, T)$}, \\
\mu\pd_t \tilde\bH - \alpha^{-1}\Delta \tilde\bH = \bN_5(\bu, \tilde\bH)&&\quad&\text{in $\dot\Omega\times(0, T)$}, \\
[[\alpha^{-1}{\rm \curl}\tilde\bH]]\bn 
= \bN_6(\bu, \tilde\bH)&&\quad&\text{on $\Gamma\times(0, T)$}, \\
[[\mu\dv\tilde\bH]]= N_7(\bu, \tilde\bH) 
&&\quad&\text{on $\Gamma\times(0, T)$}, \\
[[\mu\tilde\bH\cdot\bn]]= N_8(\bu, \tilde\bH) 
&&\quad&\text{on $\Gamma\times(0, T)$}, \\
[[\tilde\bH_\tau]] = \bN_9(\bu, \tilde\bH)
&&\quad&\text{on $\Gamma\times(0, T)$}, \\
%\bu_+=0, \quad \bn_+\cdot\tilde\bH_+=0, \quad 
%({\rm curl}\,\tilde\bH_+)\bn_+=0&&\quad&\text{on $S_+\times(0, T)$}, \\
\bn_-\cdot\tilde\bH_-=0, \quad 
({\rm curl}\,\tilde\bH_-)\bn_-=0&&\quad&\text{on $S_-\times(0, T)$}, \\
{\bu_+}|_{t=0} = \bu_{0+} \quad\text{in $\Omega_+$}, \quad
\tilde\bH|_{t=0} = \tilde\bH_0&&\quad&\text{in $\dot\Omega$}.
\end{aligned}\end{equation}
Here, $\bn$ is the unit outer normal to $\Gamma$ oriented from $\Omega_+$ into $\Omega_-$,
 $\bd_\tau = \bd - <\bd, \bn>\bn$ for any $N$-vector $\bd$,   and
the $\bN_1(\bu_+, \tilde\bH_+), \ldots, \bN_9(\bu, \tilde\bH)$ are nonlinear terms defined in
Sect. \ref{sec:2} below.

Our main result is the following theorem.
\begin{thm}\label{thm:main}
Let $1 < p, q < \infty$ and $B \geq 1$. Assume that $2/p + N/q < 1$, that
$\Omega_+$ is a uniform $C^3$ domain and 
$\Omega = \Omega_+ \cup \Gamma \cup \Omega_-$ 
a uniform $C^2$ domain, 
and that
the weak  problem is uniquely solvable in $\Omega_+$ for $q$ and 
$q' = q/(q-1)$. Let initial data $\bu_{0+}$ and $\bH_{0\pm}$ 
with
$$\bu_{0+} \in B^{3-2/p}_{q,p}(\Omega_+), \quad \tilde\bH_{0\pm} \in 
B^{2(1-1/p)}_{q,p}(\Omega_\pm)$$
satisfy the conditions: 
\begin{equation}\label{initial:1}
\|\bu_{0+}\|_{B^{3-2/p}_{q,p}(\Omega_+)}
+ \|\tilde\bH_0\|_{B^{2(1-1/p)}_q(\dot\Omega)} \leq B
\end{equation}
and compatibility conditions
\begin{equation}\label{initial:2}\begin{aligned}
\dv\bu_{0+} = 0&&\quad&\text{in $\Omega_+$}, \\
((\nu\bD(\bu_{0+}) + T_M(\tilde\bH_{0+})\bn)_\tau
= 0&&\quad&\text{on $\Gamma$}, \\
([[\alpha^{-1}\curl\tilde\bH_0]]-\mu_+(\bu_{0+}\otimes \tilde\bH_{0+} - \tilde\bH_{0+}\otimes\bu_{0+}))\bn=0
&&\quad&\text{on $\Gamma$}, \\
[[\mu\dv\tilde\bH_0]]=0, \quad [[\mu\tilde\bH_0\cdot\bn]]=0,
\quad [[(\tilde\bH_0)_\tau]]=0&&\quad&\text{on $\Gamma$}, \\
%\bu_{0+} = 0, \quad \bn_+\cdot\tilde\bH_{0+} = 0, 
%\quad(\curl\tilde\bH_{0+})\bn_+=0
%&&\quad&\text{on $S_+$}, \\
\bn_-\cdot\tilde\bH_{0-} = 0, 
\quad(\curl\tilde\bH_{0-})\bn_-=0
&&\quad&\text{on $S_-$}.
\end{aligned}\end{equation}
Then, the there exists a time $T > 0$ for which
problem \eqref{mhd.3} admits unique solutions $\bu_+$ and $\tilde\bH_\pm$ with
\begin{align*}
\bu_+&\in L_p((0, T), H^3_q(\Omega_+)^N) \cap H^1_p((0, T), H^1_q(\Omega_+)^N), \\ 
\tilde\bH_\pm &\in  L_p((0, T), H^2_q(\Omega_\pm)^N) \cap H^1_p((0, T), L_q(\Omega_\pm)^N) 
\end{align*}
possessing the estimate: 
\begin{align*}
&\|\bu_+\|_{L_p((0, T), H^3_q(\Omega_+))} + 
\|\pd_t\bu_+\|_{L_p((0, T), H^1_q(\Omega_+))}\\
&\quad + \|\tilde\bH\|_{L_p((0, T), H^2_q(\dot\Omega))} + 
\|\pd_t\tilde\bH\|_{L_p((0, T), L_q(\dot\Omega))}
\leq f(B)
\end{align*}
with some polynomial $f(B)$ with respect to $B$. 
\end{thm}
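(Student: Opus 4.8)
The plan is to prove Theorem \ref{thm:main} by the Banach fixed point theorem in a suitably chosen space of functions on a short time interval $(0,T)$, using the maximal $L_p$-$L_q$ regularity theorems for the linearized Stokes system with free boundary conditions (from \cite{S1, S2}) and for the linearized magnetic field system with transmission conditions on $\Gamma$ and perfect wall conditions on $S_-$ (from \cite{FS1}). First I would construct a reference solution: solve the two linear problems with the given initial data $\bu_{0+}$, $\tilde\bH_{0\pm}$ and zero right-hand sides (after a suitable modification so that the compatibility conditions \eqref{initial:2} match the inhomogeneous boundary data of the linear systems — here I would use an extension/lifting of the initial data and invoke the trace characterization of the Besov spaces $B^{3-2/p}_{q,p}$ and $B^{2(1-1/p)}_{q,p}$ as the correct initial-value spaces for the respective maximal regularity classes). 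Because $2/p + N/q < 1$, the solution spaces embed into $C^1$ in space and $C^0$ in time with Hölder moduli, which is what makes the nonlinear estimates close.

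Next I would set up the iteration. Define the solution space $\mathbb{X}_T$ as pairs $(\bu_+, \tilde\bH)$ with $\bu_+ \in L_p((0,T),H^3_q(\Omega_+)) \cap H^1_p((0,T),H^1_q(\Omega_+))$ and $\tilde\bH \in L_p((0,T),H^2_q(\dot\Omega)) \cap H^1_p((0,T),L_q(\dot\Omega))$, agreeing with the prescribed initial data and lying in a ball of radius $\sim f(B)$ around the reference solution. For $(\bw, \tilde\bG) \in \mathbb{X}_T$ I would substitute these into the nonlinear terms $\bN_1,\dots,\bN_9$, $N_2$, $\bN_3$ defined in Section \ref{sec:2}, and then solve the two decoupled linear systems (Stokes with free boundary data, and the parabolic magnetic system with transmission data) with those nonlinear terms as right-hand sides; this defines the map $\Phi(\bw,\tilde\bG) = (\bu_+,\tilde\bH)$. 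The bulk of the work is to show (i) $\Phi$ maps the ball into itself and (ii) $\Phi$ is a contraction, both for $T$ small. For (i) one estimates each $\bN_j$ in the norm dual to the maximal regularity estimate — $L_p((0,T),L_q)$ for the parabolic right-hand sides, and appropriate $L_p$-in-time Sobolev traces on $\Gamma$ for the boundary terms $\bN_4, \bN_6, N_7, N_8, \bN_9$ — and absorbs the small factors. The key mechanism is that every nonlinear term is either (a) a product of factors at least one of which is small because it carries an extra $\int_0^t$ (coming from the change of variables \eqref{kin:3}, so the Jacobian-type corrections are $O(T^{1-1/p})$ or $O(T)$ in the relevant norms), or (b) a genuine quadratic term like $\bu\cdot\nabla\bu$, $\bv\otimes\bH$, $\bT_M(\bH)$, whose norm is controlled by the square of the ball radius times a positive power of $T$ via the embedding $H^1_p((0,T),H^1_q) \cap L_p((0,T),H^3_q) \hookrightarrow C([0,T],B^{3-2/p}_{q,p})$ and interpolation (Hölder in $t$).

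The main obstacle, as usual for these free boundary problems, is the careful bookkeeping of the quasilinear structure: after the Lagrangian transformation \eqref{kin:3} the principal parts $\DV\bT(\bu_+,\fq_+)$, $\Delta\tilde\bH$, the divergence constraint, and the boundary operators all pick up coefficient perturbations depending on $\int_0^t \nabla\bu\,ds$, and one must verify that these perturbations are small in the operator norm on the maximal regularity class — this requires the $C^3$ regularity of $\Omega_+$ and the $C^2$ regularity of $\Omega$, the condition $2/p+N/q<1$ so that $\int_0^t\nabla\bu\,ds$ is small in $L_\infty((0,T),C^1)$ (via $\|\int_0^t \nabla \bu\,ds\|_{L_\infty((0,T),H^2_q)} \le T^{1-1/p}\|\nabla\bu\|_{L_p((0,T),H^2_q)}$), and a perturbation lemma to transfer the maximal regularity of the flat/unperturbed operators to the perturbed ones. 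A secondary technical point is that the divergence condition is inhomogeneous ($\dv\bu_+ = N_2(\bu_+) = \dv\bN_3(\bu_+)$), so one needs the version of the Stokes maximal regularity theorem that handles a right-hand side of divergence form together with the compatibility between $N_2$ and $\bN_3$; this is exactly the form proved in \cite{S1, S2}. Once the contraction is established for $T = T(B)$ small, the fixed point $(\bu_+,\tilde\bH)$ is the unique solution, uniqueness on the fixed interval following from the same contraction estimate applied to the difference of two solutions, and the asserted bound by a polynomial $f(B)$ is read off from the estimates in (i).
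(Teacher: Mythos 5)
Your overall strategy (Lagrangian transformation, Banach fixed point via the maximal $L_p$-$L_q$ regularity theorems of \cite{S1,S2,FS1}, and nonlinear estimates using Besov/Sobolev embeddings made possible by $2/p+N/q<1$) is the same as the paper's, and most of the ingredients you list are correct.

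There is, however, one structural choice that matters and that you get wrong. You define the iteration map by computing \emph{all} nonlinear terms $\bN_1,\dots,\bN_9$ from the input pair $(\bw,\tilde\bG)$ and then solving the two decoupled linear systems in parallel (a Jacobi-type map). The paper instead uses a \emph{sequential} (Gauss--Seidel) map: it first solves the magnetic system with $\bN_5,\dots,\bN_9$ evaluated at the input $(\bw,\tilde\bG)$ to produce a new field $\bH$, and \emph{then} plugs that new $\bH$ (not $\tilde\bG$) into $\bN_1,N_2,\bN_3,\bN_4$ before solving the Stokes system. This is not a cosmetic difference. The Stokes boundary datum $\bN_4$ contains $\bT_M(\bH_+)\bn$, and the maximal regularity estimate for Theorem \ref{thm:3.1.1} requires $\bN_4\in L_p(H^2_q)\cap H^1_p(L_q)$; the $H^1_p(L_q)$ norm of $\bH_+\otimes\bH_+$ is of size $\|\bH_+\|_{L_\infty(H^1_q)}\|\pd_t\bH_+\|_{L_p(L_q)}\sim B\cdot E^{2,+}_T(\bH_+)$ and carries \emph{no} positive power of $T$ (the $\|\pd_t\bH_+\|_{L_p((0,T),L_q)}$ factor is not $O(T^{\alpha})$; the same phenomenon appears as the naked ``$B$'' in the paper's $\CN_4$ bound \eqref{nd:4}). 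In the sequential scheme this is harmless, because $E^{2,+}_T(\bH_+)$ for the freshly solved $\bH$ is bounded by a polynomial in $B$ (via Theorem \ref{thm:3.2.1} and the $\bN_5,\dots,\bN_9$ estimates), independently of the ball radius $L$; the paper then chooses $L=O(\text{poly}(B))$ and closes. In your Jacobi scheme, $\bH_+=\tilde\bG_+$ is an arbitrary element of the ball, so $E^{2,+}_T(\tilde\bG_+)\le L$ and the offending term is $\sim BL$: with a single radius $L$ the self-mapping inequality becomes $C(B+BL)\le L$, which fails for $B\ge 1/C$, and the corresponding Lipschitz constant for the contraction is $\sim B$, which is $\ge 1$. (One can salvage Jacobi with a two-radius / weighted-norm bookkeeping, but your proposal does not introduce that and does not flag the issue.)

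Related to this, your stated ``key mechanism'' --- that every nonlinear term is either small because it carries $\int_0^t$, or quadratic and thus controlled by a positive power of $T$ --- is slightly too optimistic. The quadratic coupling terms $\bT_M(\bH_+)\bn$ and $\mu_+(\bv\otimes\bH_+-\bH_+\otimes\bv)\bn$ contribute $O(B^2)$ pieces with no $T$-gain at all (in the paper these are the $M_1B^2$ and $e^{2(\gamma-\gamma_0)}B^2$ terms); they are not small, only polynomially bounded in $B$, and it is precisely to absorb them that the ball radius must be taken as a polynomial $f(B)$ rather than small. You do say the radius is $\sim f(B)$, which is the right instinct, but then the mechanism you describe (smallness in $T$ from the quadratic structure) is not what actually closes those terms. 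Finally, you gloss over the half-derivative-in-time spaces $H^{1/2}_p(\BR,L_q)$ needed for the magnetic transmission data $\bN_6,N_7$ with just ``appropriate $L_p$-in-time Sobolev traces''; the paper devotes Lemmas \ref{lem:4.5.3}--\ref{lem:4.5.4} and the extension operators \eqref{4.8}, \eqref{4.13*}, \eqref{4.14} to make this precise, and this is a genuine technical component that would need to be supplied.
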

\begin{remark} As was mentioned after equations \eqref{mhd.2}, if we assume that 
$\dv\tilde\bH_{0\pm} = 0$ in $\Omega_\pm$ in addition, then $\dv \bH_\pm=0$ in $\Omega^T_\pm$, and so
$\bv$ and $\bH$ are solutions of equations \eqref{mhd.1}.  Thus, we obtain the 
local well-posedness of equations \eqref{mhd.1} from Theorem \ref{thm:main}. 
\end{remark}
Finally, we explain some symbols used throughout the paper. \\
{\bf Notation} ~  We denote the set of all naturall numbers, real numbers, complex numbers by
$\BN$, $\BR$, and $\BC$, respectively, and set $\BN_0 ~ \BN \cup\{0\}$. For any 
multi-index $\kappa=(\kappa_1, \ldots, \kappa_N)$, $\kappa_j \in \BN_0$, we set
$\pd_x^\kappa = \pd_1^{\kappa_1}\cdots\pd_N^{\kappa_N}$ and $|\kappa| = \sum_{j=1}^N \kappa_j$. 
For a scalar function, $f$,  and an $N$-vector of functions, $\bg=(g_1, \ldots, g_N)^\top$, we set 
$\nabla^nf = \{\pd^\kappa_x f \mid |\kappa|=n\}$ and $\nabla^n\bg = 
\{\pd_x^\kappa g_j \mid |\kappa|=n, j=1, \ldots, N\}$.  In particular, $\nabla^0f=f$, $\nabla^0\bg=\bg$, $\nabla^1f=\nabla f$,
and $\nabla^1\bg = \nabla \bg$. For notational convention, $\nabla\bg$ and $\nabla^2\bg$ are
sometimes considered as $N^2$ and $N^3$ column vectors, respecively,  in the following way:
$$\nabla\bg = (\pd_1g_1,\ldots, \pd_Ng_1, \ldots, \pd_1g_N, \ldots, \pd_Ng_N)^\top, 
\quad \nabla^2\bg= (\cdots, \pd_1\pd_1g_\ell, \ldots, \pd_i\pd_jg_\ell, \ldots, \pd_N\pd_Ng_\ell,
\cdots)^\top$$
for $\ell=1, \ldots, N$, and $1 \leq i \leq j \leq N$. 

For $1 \leq q \leq \infty$, $m \in \BN$, $s \in \BR$,
and any domain $D \subset \BR^N$, we denote  the standard Lebesgue space, Sobolev space, 
and Besov space by $L_q(D)$, $H^m_q(D)$, and
$B^s_{q,p}(D)$  respectively, while 
 $\|\cdot\|_{L_q(D)}$, $\|\cdot\|_{H^m_q(D)}$,
and $\|\cdot\|_{B^s_{q,p}(D)}$ denote their norms. 
We write $W^s_q(D) = B^s_{q,q}(D)$ and $H^0_q(D)=L_q(D)$.
What $f= f_\pm$ means that
$f(x) = f_\pm(x)$ for $x \in \Omega_\pm$. 
 For $\CH \in \{L_q, H^m_q, B^s_{q,p}\}$, 
the function spaces $\CH(\dot \Omega)$ 
($\dot \Omega = \Omega_+ \cup \Omega_-$) 
and their norms   are  defined by setting
$$\CH(\dot \Omega) = \{f = f_\pm \mid f_\pm \in \CH(\Omega_\pm)\},
\quad \|f\|_{\CH(\dot D)} = \|f_+\|_{\CH(D_+)} +
\|f_-\|_{\CH(D_-)}. 
$$

For any Banach space $X$, 
$\|\cdot\|_X$ being its norm,
$X^d$ denotes the $d$ product space defined by
$\{x = (x_1, \ldots, x_d) \mid x_i \in X\}$, while
 the norm of $X^d$ is simply written by $\|\cdot\|_X$, which is defined by setting 
$\|x\|_X = \sum_{j=1}^d\|x_j\|_X$.
For any time interval $(a, b)$,
$L_p((a, b), X)$ and $H^m_p((a, b), X)$ denote respective the standard
$X$-valued Lebesgue space and $X$-valued Sobolev space,
while $\|\cdot\|_{L_p((a, b), X)}$ and
$\|\cdot\|_{H^m_p((a, b), X)}$ denote their norms. 
Let $\CF$ and $\CF^{-1}$ be respective the Fourier transform
and the Fourier inverse transform. Let $H^s_p(\BR, X)$, $s>0$,
be the Bessel potential space of order $s$ defined
by
\begin{align*}
H^s_p(\BR, X) &= \{f \in \CS'(\BR, X) \mid \|f\|_{H^s_p(\BR, X)}=
\|\CF^{-1}[(1+|\tau|^2)^{s/2}\CF[f](\tau)]\|_{L_p(\BR, X)} < \infty\}.
\end{align*}
where $\CS'$ denotes the set of all $X$-valued tempered distributions on
$\BR$.

Let $\ba\cdot \bb =<\ba, \bb>= \sum_{j=1}^Na_jb_j$
for any $N$-vectors $\ba=(a_1, \ldots, a_N)$  and $\bb
=(b_1, \ldots, b_N)$.  
For any $N$-vector $\ba$, let $\ba_\tau
: = \ba - <\ba, \bn>\bn$.  For any two $N\times N$-matrices 
$\bA=(A_{ij})$ and $\bB=(B_{ij})$, the quantity $\bA:\bB$ is defined by
$\bA:\bB 
= \sum_{i,j=1}^NA_{ij}B_{ji}$. 
For any domain $G$ with boundary $\pd G$, 
we set
$$(\bu, \bv)_G = \int_G\bu(x)\cdot\overline{\bv(x)}\,dx, 
\quad (\bu, \bv)_{\pd G} = \int_{\pd G}\bu(x)\cdot\overline{\bv(x)}\,d\sigma,
$$
where $\overline{\bv(x)}$ is the complex conjugate of $\bv(x)$ and 
$d\sigma$ denotes the surface element of $\pd G$. 
Given  
$1 < q < \infty$, let $q' = q/(q-1)$. Throughout the paper, the letter $C$ denotes
generic constants and   $C_{a,b, \cdots}$ 
the constant  which depends on $a$, $b$, $\cdots$.
 The values of constants $C$, $C_{a,b,\cdots}$
may be changed from line to line.

When we describe nonlinear terms $\bN_1(\bu_+, \tilde\bH_+),
\ldots, \bN_9(\bu, \tilde\bH)$ in \eqref{mhd.3},  we use the following notational conventions. 
Let $\bu_i$ ($i=1, \ldots, m)$ be $n_i$-vectors whose 
$j$th component is $u_{ij}$, and then 
$\bu_1\otimes\cdots\otimes \bu_m$  denotes an $n = {\prod}_{i=1}^m n_i$
vector whose $(j_1, \ldots, j_m)$th component is $\Pi_{i=1}^m u_{ij_i}$ and 
 the set $\{(j_1, \ldots, j_m) \mid 1 \leq j_i\leq n_i, \enskip i=1, \ldots, m\}$ is 
rearranged as $\{k \mid k=1, 2, \ldots, n\}$ and $k$ is the corresponding 
number to some  $(j_1, \ldots, j_m)$. For example, 
$\bu\otimes \nabla\bv$ is an $N+N^2$ vector whose $(i, j, k)$ component is  
$ u_i\pd_jv_k$ and $\bu\otimes\nabla\bv\otimes\nabla^2\bw$ is an 
$N+N^2+N^3$ vector whose $(i,j,k,\ell,m,n)$ component is
$u_i(\pd_jv_k)\pd_\ell\pd_mw_n$. Here, the sets $\{(i, j ,k) \mid 1 \leq i, j, k \leq N\}$ and 
$\{(i, j, k, \ell, m, n) \mid 1\leq i, j, k, \ell, m, n\leq N\}$ are rearranged as 
$\{k  \mid 1 \leq k \leq N+N^2\}$ and $\{k \mid 1 \leq k \leq N+N^2+N^3\}$,
respectively.  Let $\bu^\ell_i$ $(i=1, \ldots, m_\ell, \ell=1, \ldots, n)$ be $n^\ell_i$-
vectors, let $\bA^\ell$ be $n^\ell\times N$ 
matrices, where $n^\ell = {\prod}_{i=1}^{m_\ell} n^\ell_i$,
 and set  $\bA = \{\bA^1,  \ldots, \bA^m\}$.   And then,  we write
$$\bA(\bu^1_1\otimes\cdots\otimes\bu^1_{m_1}, \ldots, \bu^m_1\otimes
\cdots\otimes\bu^m_{n_m}) = \sum_{\ell=1}^n
\bA^\ell\bu^\ell_1\otimes\cdots\otimes\bu^\ell_{m_\ell}.
$$
When there are two sets of matrices $\bA=\{\bA^1,\ldots, \bA^n\}$ and 
$\bB = \{\bB^1, \ldots, \bB^n\}$, we write
$$(\bA-\bB)(\bu^1_1\otimes\cdots\otimes\bu^1_{m_1}, \ldots, \bu^m_1\otimes
\cdots\otimes\bu^m_{n_m}) = \sum_{\ell=1}^n
(\bA^\ell-\bB^\ell)\bu^\ell_1\otimes\cdots\otimes\bu^\ell_{m_\ell} . 
$$

%%%%%%%%%%%%%%%%%%%%%%%%%%%%%%%%%%%%%%%
\section{Derivation of nonlinear terms} \label{sec:2}

Let $\bu_+(\xi, t)$ be the velocity field with respect to the Lagrange coordinates
$\xi \in \Omega_+$, and let $\bu_-(\xi, t)$ be the extension of  $\bu_+$ to 
$\xi \in \Omega_-$ satisfying the conditions given in \eqref{ext:1}. 
Let 
$$\psi_\bu(\xi, t) = \varphi(\xi)\int^t_0\bu(\xi, s)\,ds$$
 and we consider the 
correspondence: $x = \xi + \psi_\bu(\xi, t)$ for $\xi \in \Omega$,
which has been already 
given in \eqref{kin:3}.  Let $\delta \in (0, 1)$ 
be a small constant and we assume that 
\begin{equation}\label{2.1*}
\sup_{t \in (0, T)}\|\psi_\bu(\cdot, t)\|_{H^1_\infty(\Omega)} \leq \delta.
\end{equation}
And then, the correspondence: $x = \xi + \psi_\bu(\xi, t)$
is one to one. Since $\psi_\bu(\xi, t) = 0$ when ${\rm dist}\,(\xi, S_\pm) \leq d_0$, if
$\bu$ satisfies the regularity condition:
\begin{equation}\label{2.2}
\bu(\xi, t) \in H^1_p((0, T), H^1_q(\Omega)) \cap L_p((0, T), H^3_q(\Omega)),
\end{equation}
then the correspondence $x = \xi + \psi_\bu(\xi, t)$ is a bijection from $\Omega$
onto $\Omega$, and so we set
\begin{equation}\label{2.3}
\Omega_{t\pm} = \{x = \xi + \psi_\bu(\xi, t) \mid \xi \in \Omega_\pm\},
\quad \Gamma_t = \{x = \xi + \psi_\bu(\xi, t) \mid \xi \in \Gamma\}.
\end{equation}
In the following, for notational simplicity we set 
\begin{equation}\label{psi:1}
\Psi_\bu = \int^t_0\nabla(\varphi(\xi)\bu(\xi, s))\,ds, 
\end{equation}
where $\nabla = (\pd/\pd \xi_1, \ldots, \pd/\pd \xi_N)$. 
The Jacobi matrix of the correspondence: $x = \xi + \psi_\bu(\xi, t)$ is 
\begin{equation}\label{2.4*}\frac{\pd x}{\pd \xi} = \bI + \Psi_\bu.
\end{equation}
Notice that 
\begin{equation}\label{2.1}
\|\Psi_\bu\|_{L_\infty((0, T), L_\infty(\Omega))} \leq \delta
\end{equation}
as follows from the assmption \eqref{2.1*} .  Thus, we have 
\begin{equation}\label{2.4} 
\frac{\pd \xi}{\pd x} = \Bigl(\frac{\pd x}{\pd \xi}\Bigr)^{-1}
= \bI + \sum_{k=1}^\infty({-\Psi(\xi, t)})^k= \bI + \bV_0(\Psi_\bu).
\end{equation}
Here and in the following, we set 
\begin{equation}\label{2.4*}
\bV_0(\bk) = \sum_{k=1}^\infty(-\bk)^k
\end{equation}
for $|\bk| \leq \delta ( <1)$.  The  $\bk = (\bk_1, \ldots, \bk_N) \in \BR^{N^2}$ with
$\bk_i = (k_{i1}, \ldots, k_{iN}) \in \BR^N$ denotes  the  independent variables
corresponding  to
$\Psi_\bu = (\Psi_{1\bu}, \ldots, \Psi_{N\bu})$ with $\Psi_{\ell\bu} 
= \int^t_0 \nabla(\varphi(\xi)u_\ell(\xi, s))\,ds$.
The $\bV_0(\bk)$ is a matrix
of analytic functions defined on $|\bk| \leq \delta$ with $\bV_0(0) = 0$.  Using this symbol, we have
\begin{equation}\label{2.5}
\nabla_x = (\bI + \bV_0(\Psi_\bu))\nabla_\xi, \quad \frac{\pd}{\pd x_i} = \frac{\pd}{\pd \xi_i}
+ \sum_{j=1}^NV_{0ij}(\Psi_\bu)\frac{\pd}{\pd \xi_j}
\end{equation}
where $V_{0ij}$ is the $(i, j)$th component of the $N\times N$ matrix $\bV_0$. 

For any $N$-vector of functions, $\bw(x, t) = (w_1(x, t), \ldots, w_N(x, t))^\top$, 
we set $\tilde \bw(\xi, t) = \bw(x, t)$ and $\bv(x, t) = \bu(\xi, t)$. 
And then, by \eqref{2.5} 
\begin{equation}\label{2.6}\begin{aligned}
\pd_t\bw(x, {t}) + \bv(x, t)\cdot\nabla \bw(x, t) = 
\pd_t\tilde\bw (\xi, t) + (1-\varphi(\xi))\bu(\xi, t)\cdot
 ((\bI + \bV_0(\Psi_\bu))\nabla\tilde \bw(\xi, t)).
\end{aligned}\end{equation}
By \eqref{2.5}, 
\begin{equation}\label{2.7}\begin{aligned}
\bD(\bw) & = \bD(\tilde\bw) + \CD(\Psi_\bu)\nabla\tilde\bw, \quad 
\curl \bw & = \curl \tilde\bw + \CC(\Psi_\bu)\nabla \tilde\bw
\end{aligned}\end{equation}
with
\begin{align*}
\CD(\Psi_\bu)\nabla\tilde\bw = \bV_0(\Psi_\bu)\nabla\tilde\bw + (\bV_0(\Psi_\bu)\nabla\tilde\bw)^\top,
\quad \CC(\Psi_\bu)\nabla\tilde\bw  = 
\bV_0(\Psi_\bu)\nabla\tilde\bw - (\bV_0(\Psi_\bu)\nabla\tilde\bw)^\top. 
\end{align*}
By \eqref{2.5}, 
\begin{equation}\label{2.8} 
\dv \bw = \dv \tilde\bw  + V_0(\Psi_\bu):\nabla \tilde\bw
\end{equation}
with $V_0(\Psi_\bu):\nabla \tilde\bw = \sum_{i,j=1}^N V_{0ij}(\Psi_\bu)\frac{\pd \tilde w_i}{\pd \xi_j}$.  
Analogously, for any $N\times N$ matrix of functions, $A = (A_{ij})$, we set $\tilde A(\xi, t) =A(x, t)$
and $\tilde A_{ij}(\xi, t) = A_{ij}(x, t)$. Let  
$\bV_0(\Psi_\bu):\nabla \tilde A$ be an $N$-vector of functions whose $i$-th component is
$\sum_{j,k=1}^N V_{0jk}(\Psi_\bu)\frac{\pd \tilde A_{ij}}{\pd \xi_k}$, and then
\begin{equation}\label{2.9}
\DV A = \DV \tilde A + \bV_0(\Psi_\bu):\nabla \tilde A.
\end{equation}
On the other hand, using the dual form, we see that
\begin{equation}\label{2.10}
\dv \bw = \dv((\bI + \bV_0(\Psi_\bu)^\top)\tilde\bw) = \dv \tilde\bw + \dv(\bV_0(\Psi_\bu)^\top\tilde\bw).
\end{equation}
Let $\fq(\xi, t) = \fp(x, t)$, $\bu_+(\xi, t) = \bv(x, t)$, and 
$\tilde\bH_\pm = \bH(x, t)$. By \eqref{2.6}, \eqref{2.7}, and \eqref{2.9},
we see that the first equation in equations \eqref{mhd.2} is transformed to 
\begin{align*}
&\rho\pd_t\bu_+ + \rho(1-\varphi)\bu_+\cdot(\bI + \bV_0(\Psi_\bu))\nabla\bu_+
- \DV(\nu\bD(\bu_+) + \nu\CD(\Psi_\bu)\nabla\bu_+) \\
&- \bV_0(\Psi_\bu):\nabla(\nu\bD(\bu_+) + \nu\CD(\Psi_\bu)\nabla\bu_+)
+ (\bI + \bV_0(\Psi_\bu))\nabla\fq
-\DV T_M(\tilde\bH_+) - \bV_0(\Psi_\bu):\nabla T_M(\tilde\bH_+) = 0.
\end{align*}
By \eqref{2.4*} and \eqref{2.4}, 
$(\bI + \Psi_\bu)(\bI + \bV_0(\Psi_\bu)) = \bI$, and so we have
$$\rho\pd_t\bu_+ - \DV \bT(\bu_+, \fq) = \bN_1(\bu_+, \bH_+)$$
with
\begin{align*} 
\bN_1(\bu_+, \tilde\bH_+) & = -\Psi_\bu
\{\rho\pd_t\bu_+ - \DV(\nu\bD(\bu_+)\} \\
& + \Bigl(\bI + \Psi_\bu)
\{-\rho(1-\varphi)\bu_+\cdot(\bI + \bV_0(\Psi_\bu))\nabla\bu_+ 
+\nu \DV(\CD(\Psi_\bu)\nabla\bu_+) \\
&\quad +\bV_0(\Psi_\bu):\nabla(\nu\bD(\bu_+) + \nu\CD(\Psi_\bu)\nabla\bu_+)
+ \DV\bT_M(\tilde\bH_+) + \bV_0(\Psi_\bu):\nabla\bT_M(\tilde\bH_+)\}.
\end{align*}
Using notational convention given in Notation, we may write 
\begin{equation}\label{2.12}\begin{aligned}
&\bN_1(\bu_+, \tilde\bH_+) \\
&\quad = \CA_1(\Psi_\bu)(\Psi_\bu\otimes(\pd_t\bu_+, \nabla^2\bu_+),
\bu_+\otimes\nabla\bu_+ , \nabla\Psi_\bu\otimes\nabla\bu_+,
\tilde\bH_+\otimes\nabla\tilde\bH_+), 
\end{aligned}\end{equation}
where $\CA_1(\bk)$ is a set of matrices of  smooth 
functions defined for $|\bk| \leq \delta.$ 
Combining \eqref{2.8} and  \eqref{2.9}, we see that the condition $\dv \bv= 0$
in $\Omega_t$ is transformed to 
$$\dv \bu_+ = - \bV_0(\Psi_\bu):\nabla\bu_+ = -\dv(\bV_0(\Psi_\bu)^\top\bu_+).
$$
Thus, we set $N_2(\bu_+) 
= - \bV_0(\bk):\nabla\bu_+$ and $\bN_3(\bu_+) = -\bV_0(\bk)^\top\bu_+$, 
and then by using notational convenience defined in Notation, we may write 
\begin{equation}\label{2.11}
N_2(\bu_+)  = \CA_2(\Psi_\bu)\Psi_\bu\otimes\nabla\bu_+, \quad 
\bN_3(\bu_+) = \CA_3(\Psi_\bu)\Psi_\bu\otimes\bu_+
\end{equation}
where $\CA_i(\bk)$ ($i=2,3$) are matrices of smooth functions defined for $|\bk| \leq \delta$. 
By \eqref{2.5}, 
$$\Delta\bH_\pm = \nabla\cdot\nabla\bH_\pm =\Delta\tilde\bH_\pm 
+ \nabla(\bV_0(\Psi_\bu)\nabla\tilde\bH_\pm) 
+ \bV_0(\Psi_\bu)\nabla((\bI+\bV_0(\Psi_\bu))\nabla\tilde\bH_\pm).
$$
and so noting \eqref{2.6} we see that  the second and third equations in \eqref{mhd.2} 
are transformed to
\begin{align*}\mu_+\pd_t\tilde\bH_+ - \alpha_+^{-1}\Delta\tilde\bH_+ = \bN_{5+}(\bu, \tilde\bH) 
&\quad\text{in $\Omega_+\times(0, T)$}, \\
\mu_-\pd_t\tilde\bH_- - \alpha_-^{-1}\Delta\tilde\bH_- = \bN_{5-}(\bu,  \tilde\bH)
&\quad\text{in $\Omega_-\times(0, T)$}
\end{align*}
with
\begin{align*}
\bN_{5+}(\bu, \tilde\bH) & = \mu_+\varphi{\bu}\cdot(\bI + \bV_0(\Psi_\bu))\nabla\tilde\bH_+\\
&+ \alpha^{-1}_+\{{\DV}(\bV_0(\Psi_\bu)\nabla\tilde\bH_+)
+ \bV_0(\Psi_\bu)\nabla((\bI + \bV_0(\Psi_\bu))\nabla\tilde\bH_+)\}\\
&+ \DV(\mu_+(\bu_+\otimes\tilde\bH_+ - \tilde\bH_+\otimes\bu_+))
+ \bV_0(\Psi_\bu):\nabla(\mu_+(\bu_+\otimes\tilde\bH_+ - \tilde\bH_+\otimes\bu_+));\\
\bN_{5-}(\bu, \tilde\bH) & =  \mu_-\varphi{\bu}\cdot(\bI + \bV_0(\Psi_\bu))\nabla\tilde\bH_-\\
&+ \alpha^{-1}_-\{{\DV}(\bV_0(\Psi_\bu)\nabla\tilde\bH_-)
+ \bV_0(\Psi_\bu)\nabla((\bI + \bV_0(\Psi_\bu))\nabla\tilde\bH_-)\}.
\end{align*}
Thus, using notational convention given in Notation, we may write 
\begin{equation}\label{2.13}\begin{aligned}
\bN_{5\pm}(\bu, \tilde\bH) 
= \CA_{5\pm}(\Psi_\bu)(\tilde\bH_\pm\otimes\nabla\tilde\bH_\pm, \Psi_\bu\otimes\nabla^2\tilde\bH_\pm, 
\nabla\Psi_\bu\otimes\nabla\tilde\bH_\pm, \delta_\pm\nabla\bu_\pm\otimes\tilde\bH_\pm,
{\bu_\pm\otimes\nabla\tilde\bH_\pm}), 
\end{aligned}\end{equation}
where $\delta_+=1$ and $\delta_-=0$, where  $\CA_{5\pm}(\bk)$ are two sets of matrices
of  smooth functions  defined for $|\bk| \leq \delta$.  In partuclar, 
we have the fourth equation in \eqref{mhd.3}. 

We now consider the transmission conditions. The unit outer normal, $\bn_t$, to the $\Gamma_t$ is 
represented by
$$\bn_t = \frac{{(\bI + \bV_0(\Psi_\bu))^\top}\bn}{|{(\bI + \bV_0(\Psi_\bu))^\top}\bn|}.$$
Choosing $\delta > 0$ small enough, we may write
\begin{equation}\label{2.14}
\bn_t = (\bI + \bV_\bn(\Psi_\bu))\bn, 
\end{equation}
where $\bV_\bn(\bk)$ is a matrix of smooth functions defined on $|\bk| \leq \delta$ such that
$\bV_\bn(0) = 0$.  By \eqref{2.7} 
\begin{align*}
&(\bT(\bv, \fp)+\bT_M(\bH_+))\bn_t \\
&\quad = \nu(\bD(\bu_+) + \CD(\Psi_\bu)\nabla\bu)(\bI+ \bV_\bn(\Psi_\bu))\bn
-(\bI +\bV_\bn(\Psi_\bu))\fq\bn + \bT_M(\tilde\bH_+)(\bI + \bV_\bn(\Psi_\bu))\bn=0.
\end{align*}
Choosing $\delta > 0$ small if necessary, we may assume that $(\bI + \bV_\bn(\bk))^{-1}$
exists and we may write  $(\bI + \bV_\bn(\bk))^{-1}= \bI + \bV_{\bn,-1}(\bk)$, where
$\bV_{\bn,-1}(\bk)$ is a matrix of smooth functions defined on $|\bk| \leq \delta$
such that $\bV_{\bn,-1}(0) = 0$.  
Thus, setting 
\begin{align*}
\bN_4(\bu_+, \tilde\bH_+) &= - \{\nu\bD(\bu_+)\bV_\bn(\Psi_\bu))\bn
+ \bV_{\bn,-1}(\Psi_\bu)\nu\bD(\bu_+)(\bI+\bV_\bn(\Psi_\bu)\bn)\\
& + (\bI + \bV_{\bn,-1}(\Psi_\bu))(\nu\CD(\Psi_\bu)\nabla\bu_+
+\bT_M(\tilde\bH_+))(\bI + \bV_\bn(\Psi_\bu))\bn\}, 
\end{align*}
we have
$$\bT(\bu_+, \fq) = \bN_4(\bu_+, \tilde\bH_+)\quad
\text{on $\Gamma\times(0, T)$}.
$$
Using notational convention defined in Notation, we may write
\begin{equation}\label{2.15}\begin{aligned}
\bN_4(\bu_+, \tilde\bH_+) & = \CA_4(\Psi_\bu)(\Psi_\bu\otimes\nabla\bu_+, \tilde\bH_+\otimes\tilde\bH_+).
\end{aligned}\end{equation}
where $\CA_4(\bk)$ is a set of matrices of  functions consisting of  products of  elements of $\bn$ 
 and smooth functions defined for 
$|\bk| \leq \delta$. 

By \eqref{2.7} and \eqref{2.14},
\begin{align*}
&[[(\alpha^{-1}\curl\bH)\bn_t]] - \mu_+(\bv\otimes\bH_+ - \bH_+\otimes\bv)\bn_t
\\
&\quad = [[(\alpha^{-1}(\curl\tilde\bH + \CC(\Psi_\bu)\nabla\tilde\bH)(\bI + \bV_\bn(\Psi_\bu))\bn]]
-\mu_+(\bu_+\otimes\tilde\bH_+ - \tilde\bH_+\otimes\bu_+)
(\bI + \bV_\bn({\Psi_\bu}))\bn = 0.
\end{align*}
Thus, setting 
\begin{align*}
\bN_6(\bu, \tilde\bH) & = -\{[[(\alpha^{-1}\curl\tilde\bH)\bV_\bn(\Psi_\bu)\bn]]
+[[(\alpha^{-1}\CC(\Psi_\bu)\nabla\tilde\bH)(\bI + \bV_\bn(\Psi_\bu))\bn]]\} \\
& + \mu_+(\bu_+\otimes\tilde\bH_+ - \tilde\bH_+\otimes\bu_+)
(\bI + \bV_\bn(\Psi_\bu))\bn,
\end{align*}
we have
$$[[(\alpha^{-1}\curl \tilde\bH)\bn]] = \bN_6(\bu, \tilde\bH)\quad
\text{on $\Gamma\times(0, T)$}.
$$
Using notational convention defined in Notation and noting that 
$[[\Psi_\bu]]=0$ on $\Gamma$ as follows from \eqref{ext:1}, we may write
\begin{equation}\label{2.16}\begin{aligned}
\bN_6(\bu, \tilde\bH)= \CA_{61}(\Psi_\bu)[[\alpha^{-1}\nabla\tilde\bH]] 
+ (\CB+\CA_{62}(\Psi_\bu))\bu_+\otimes\tilde\bH_+
\end{aligned}\end{equation}
where $\CA_{61}(\bk)$ and $\CA_{62}(\bk)$ are  a matrix and a set of 
matrices of functions  consisting of  products of
elements of $\bn$ and smooth functions defined for 
$|\bk| \leq \delta$, and $\CB$ is a set of matrices of functions such that 
$\CB\bu_+\otimes\tilde\bH_+ 
= \mu_+(\bu_+\otimes\tilde\bH_+ - \tilde\bH_+\otimes\bu_+)\bn$.
In particular, 
\begin{equation}\label{mat:1}
{\|\CA_{6i}(\Psi_\bu)\|_{H^1_\infty(\Omega)} \leq C\|\Psi_\bu\|_{H^1_\infty(\Omega)}}.
\end{equation} 
By \eqref{2.10}, 
$$[[\mu\dv\bH]]=[[\mu\dv\tilde\bH + \mu\bV_0(\Psi_\bu):\nabla\tilde\bH]]=0,$$
and so setting 
\begin{equation}\label{2.17}
N_7(\bu, \tilde\bH) = -[[\mu\bV_0(\Psi_\bu):\nabla\tilde\bH]] = \CA_7(\Psi_\bu)
[[\mu\nabla\tilde\bH]]
\end{equation}
we have 
$$[[\mu\dv\tilde \bH]] = N_7(\bu, \tilde\bH) \quad\text{on $\Gamma\times(0, T)$},
$$
where $\CA_7(\bk)$ is a matrix of functions consisting of products of 
elements of $\bn$ and smooth functions defined for $|\bk| \leq \delta$.  
Notice that 
\begin{equation}\label{mat:2}
{\|\CA_{7}(\Psi_\bu)\|_{H^1_\infty(\Omega)} \leq C\|\Psi_\bu\|_{H^1_\infty(\Omega)}.}
\end{equation}

By \eqref{2.14}, we have
$$[[\mu\bH\cdot\bn_t]] = [[\mu\tilde\bH(\bI + \bV_\bn(\Psi_\bu))\bn]] = 0,$$
and so , setting
\begin{equation}\label{2.18*}
N_8(\bu, \tilde\bH) = -[[\mu\tilde\bH]]\bV_\bn(\Psi_\bu)\bn
\end{equation}
we have 
$$[[\mu\tilde\bH\cdot\bn]]= N_8(\bu, \tilde\bH) \quad\text{on $\Gamma\times(0, T)$}, 
$$

Finally, by \eqref{2.14} 
\begin{align*}
[[\bH-<\bH, \bn_t>\bn_t]] &= [[\tilde\bH - < \tilde\bH, (\bI + \bV_\bn(\Psi_\bu))\bn>(\bI + \bV_\bn({\Psi_\bu}))\bn]]
\\
& = [[\tilde\bH_\tau]]-[[<\tilde\bH, \bn>\bV_\bn(\Psi_\bu)\bn]] - 
[[<\tilde\bH, \bV_\bn(\Psi_\bu)\bn>(\bI + \bV_\bn(\Psi_\bu))\bn]],
\end{align*}
and so, setting
\begin{equation}\label{2.19*}
\bN_9(\bu, \tilde\bH) = <[[\tilde\bH]], \bn>\bV_\bn(\Psi_\bu)\bn +
<[[\tilde\bH]], \bV_\bn(\Psi_\bu)\bn>(\bI + \bV_\bn(\Psi_\bu))\bn,
\end{equation}
we have
$$[[\tilde\bH_\tau]] = \bN_9(\bu, \tilde\bH) \quad\text{on $\Gamma\times(0, T)$}, 
$$
For notational simplicity, we set 
\begin{equation}\label{2.18}
(N_8(\bu, \tilde\bH), \bN_9(\bu, \tilde\bH) = \CA_8(\Psi_\bu)[[\tilde\bH]],
\end{equation}
where $\CA_8(\bk)$ is  a set of matrices 
of  functions consisting of  products of
elements of $\bn$ and smooth functions defined for 
$|\bk| \leq \delta$. Notice that 
\begin{equation}\label{mat:2}
{\|\CA_{8}(\Psi_\bu)\|_{H^1_\infty(\Omega)} \leq C\|\Psi_\bu\|_{H^1_\infty(\Omega)}}.
\end{equation}

%%%%%%%%%%%%%%%
%%%%%%%%%%%%%%
\section{Linear Theory}\label{sec:3}

Since the coupling of the velocity field and the magnetic field in 
\eqref{mhd.3} is semilinear, the linearized equations are decouple.  Namely, 
we consider the two linearlized equations:  one is  Stokes equations
with {free boundary conditions on $\Gamma$}, 
and  another is a system  of  heat 
equations with transmission conditions on $\Gamma$ and the perfect
wall conditions on ${S_-}$.  Recall that 
$\dot\Omega = \Omega_+ \cup \Omega_-$ and $\Omega
= \dot\Omega\cup \Gamma$. 
\subsection{The Stokes equations with free boundary conditions}
\label{subsec:3-1}
This subsection is devoted to presenting the $L_p$-$L_q$
maximal regularity theorem for the  
Stokes equations with free boundary conditions.  The problem considered here is formulated by
the following equations: 
\begin{equation}\label{3.1.1}
\begin{aligned}
\rho\pd_t\bv - \DV\bT(\bv, \fq) = \bff_1&
&\quad &\text{in $\Omega_+\times(0, T)$}, \\
 \dv\bv= g = \dv\bg&
&\quad &\text{in $\Omega_+\times(0, T)$}, \\
\bT(\bv, \fq)\bn= \bh&
&\quad &\text{on $\Gamma\times(0, T)$}, \\
%\bv=0&
%&\quad&\text{on $S_+\times(0, T)$}, \\
\bv|_{t=0}  = \bu_{0+}&
&\quad&\text{in $\Omega_+$}.
\end{aligned}
\end{equation}
To state assumptions for equations \eqref{3.1.1}, we make two definitions.
\begin{dfn}\label{dfn:1} Let $\Omega_+$ be a domain given in Introduction. 
 We say that $\Omega_+$ is a uniform ${C^3}$ domain,
if there exist positive constants $a_1$, $a_2$, and $A$ such 
that the following assertion holds:  For any $x_0 = (x_{01},
\ldots, x_{0N}) \in \Gamma$ there exist a coordinate number
$j$ and a ${C^3}$ function $h(x')$ defined on $B'_{a_1}(x_0')$ such that 
$\|h\|_{H^k_\infty(B'_{a_1}(x_0'))} \leq A$ {for $k\leq3$} and
\begin{align*}
\Omega_+\cap B_{a_2}(x_0) & = \{x \in \BR^N \mid x_j > h(x') \enskip
(x' \in B_{a_1}'(x_0')) \} \cap B_{a_2}(x_0), \\
\Gamma\cap B_{a_2}(x_0) & = \{x \in \BR^N \mid x_j = h(x') \enskip
(x' \in B_{a_1}'(x_0')) \} \cap B_{a_2}(x_0).
\end{align*}
Here,  we have set 
\begin{gather*}
y' = (y_1, \ldots, y_{j-1}, y_{j+1}, \ldots, y_N) \enskip (y \in \{x, x_0\}),
 \\
B'_{a_1}(x'_0)  = \{x' \in \BR^{N-1} \mid |x' - x'_0| < a_1\}, \\
B_{a_2}(x_0) = \{x \in \BR^N \mid |x-x_0| < a_2\}.
\end{gather*}
\end{dfn}
Let $\hat H^1_{q,0}(\Omega_+)$ be a homogeneous Sobolev space defined by
letting
\begin{equation}\label{hom:1}
\hat H^1_{q,0}(\Omega_+) = \{\varphi \in L_{q, {\rm loc}}(\Omega_+)
\mid \nabla\varphi \in L_q(\Omega_+)^N, \quad\varphi|_\Gamma=0\}.
\end{equation}
Let $1 < q < \infty$. The variational equation:
\begin{equation}\label{wd:1}
(\nabla u, \nabla\varphi)_\Omega = (\bff, \nabla\varphi)_\Omega
\quad\text{for all $\varphi \in \hat H^1_{q', 0}(\Omega_+)$}
\end{equation}
is called the weak Dirichlet problem, where $q' = q/(q-1)$.  
\begin{dfn}\label{dfn:wd} We say that the weak Dirichlet  problem
\eqref{wd:1} is uniquely solvable for an index $q$ if 
for any $\bff \in L_q(\Omega)^N$, problem \eqref{wd:1} admits
a unique solution $u \in \hat H^1_{q,0}(\Omega)$ possessing the 
estimate: $\|\nabla u\|_{L_q(\Omega)} \leq C\|\bff\|_{L_q(\Omega)}$.
\end{dfn} 
We say that $\bu \in L_q(\Omega_+)$ is solenoidal if $\bu$ satisfies 
\begin{equation}\label{solenoidal:1}
(\bu, \nabla\varphi)_{\Omega_+} = 0 
\quad\text{for any $\varphi \in \hat H^1_{q',0}(\Omega_+)$}.
\end{equation}
Let $J_q(\Omega_+)$ be the set of all solenoidal vector of functions.

In this paper, we assume that 
\begin{itemize}
\item[\thetag1]~ $\Omega_+$ is a unform $C^3$ domain.
\item[\thetag2]~ The weak Dirichlet problem is uniquely solvable
in $\Omega_+$ for indices $q \in (1, \infty)$ and $q'=q/(q-1)$. 
\end{itemize}
By assumption \thetag2, 
we see that $L_q(\Omega_+)^N = J_q(\Omega_+)\oplus G_q(\Omega_+)$, 
where $G_q(\Omega_+) 
= \{\nabla\varphi \mid \varphi \in \hat H^1_{q,0}(\Omega_+)\}$ and the symbol 
$\oplus$ here denotes the direct sum of $J_q(\Omega_+)$ and $G_q(\Omega_+)$.
\begin{thm}\label{thm:3.1.1} 
Let $1 < p, q < \infty$ with ${2/p+N/q \not=1}$, and $T > 0$. 
 Let 
$\bu_{0+} \in B^{3-2/p}_{q, p}(\dot\Omega)$ and 
let $\bff$, $g$, $\bg$, $\bh$ be functions appearing in equations \eqref{3.1.1} 
satisfying the following conditions:
\begin{gather*}
 \bff \in L_p((0, T), H^1_q(\Omega_+)^{N}), 
\quad 
 g \in L_p((0, T), H^2_q(\Omega_+)) \cap 
H^1_p((0, T), L_q(\Omega_+)), \\
\bg \in H^1_p((0, T), H^1_q(\Omega_+)^N), \quad 
\bh \in L_p((0, T), H^2_q(\Omega_+)^N) \cap H^1_p((0, T), L_q(\Omega_+)^N).
\end{gather*} 
  Assume that $\bu_0$,  $g$, and
$\bh$ satisfy the following compatibility conditions:
\begin{equation}\label{compati:3}\begin{aligned}
\dv\bu_{0+} = g|_{t=0}&&\quad &\text{on $\dot\Omega$},
\quad \bu_{0+} - \bg|_{t=0} \in J_q(\Omega_+), \\
(\nu\bD(\bu_{0+})\bn)_\tau = \bh_\tau|_{t=0}&
&\quad&\text{on $\Gamma$, \phantom{${}_\pm$}provided
$2/p + 1/q < 1$},
%\\ 
%\bu_{0+} =0&&\quad&\text{on $S_\pm$, provided $2/p+1/q < 2$},
\end{aligned}\end{equation}
where $\bd_\tau = \bd - <\bd, \bn>\bn$. Then, problem \eqref{3.1.1} admit unique
solutions $\bv$ and $\fq$ with
\begin{align*}
\bv & \in L_p((0, T), H^3_q(\Omega_+)^N) \cap
H^1_p((0, T), H^1_q(\Omega_+)^N), \quad 
\fq  \in L_p((0, T), H^1_q(\Omega_+)+ \hat H^1_{q,0}(\Omega_+)),
\end{align*}
and $\nabla^2\fq \in L_p((0, T), L_q(\Omega_+)^{N^2})$ 
possessing the estimates:
$$
\|\pd_t\bv\|_{L_p((0, T), H^1_q(\Omega_+))}
+ \|\bv\|_{L_p((0, T), H^3_q(\Omega_+))}
\leq Ce^{\gamma_1T}\{\|\bu_0\|_{B^{3-2/p}_{q,p}(\Omega_+)}
+ F_v(\bff, g, \bg, \bh)\}$$
with
$$F_v(\bff, g, \bg, \bh) =  \|\bff\|_{L_p((0, T), H^1_q(\Omega_+))}
 + \|(g, \bh)\|_{L_p(\BR, H^2_q(\Omega_+))}
+ \|\bg\|_{H^1_p(\BR, H^1_q(\Omega_+))}
+\|(g, \bh)\|_{H^1_p(\BR, L_q(\Omega_+))}
$$
for some positive constants $C$ and $\gamma_1$ independent of $T$.
\end{thm}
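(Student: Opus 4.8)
The plan is to reduce the variable-coefficient problem \eqref{3.1.1} to the constant-coefficient model problems for which $\mathcal{R}$-bounded solution operators and $L_p$-$L_q$ maximal regularity are already known from Shibata \cite{S1, S2}. Since $\Omega_+$ is a uniform $C^3$ domain, I would begin by constructing a countable open covering $\{B_{j}\}$ of $\overline{\Omega_+}$ subordinate to the uniformity constants $a_1, a_2, A$: interior balls $B_j \subset \Omega_+$ on which the problem is modeled on the whole space $\mathbb{R}^N$, and boundary balls $B_j$ centered at points of $\Gamma$ on which, after the local $C^3$ change of variables flattening $\Gamma$, the problem is a perturbation of the model Stokes problem in the half-space $\mathbb{R}^N_+$ with free boundary conditions $\mathbf{T}(\mathbf{v},\mathfrak{q})\mathbf{n} = \mathbf{h}$. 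Let $\{\zeta_j\}$ be an associated partition of unity with the standard derivative bounds. The weak Dirichlet solvability assumption \thetag2 is exactly what is needed to define the pressure globally: given $(\mathbf{v}, \mathfrak{q})$ locally, the pressure term is recovered through the Helmholtz-type decomposition $L_q(\Omega_+)^N = J_q(\Omega_+) \oplus G_q(\Omega_+)$, so $\mathfrak{q} \in H^1_q(\Omega_+) + \hat H^1_{q,0}(\Omega_+)$ is well-defined and $\nabla^2\mathfrak{q} \in L_q$.

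Next I would treat existence. Multiplying the model solution operators by the cutoffs $\zeta_j$ produces, via the $\mathcal{R}$-boundedness and the operator-valued Fourier multiplier theorem of Weis, a parametrix $\Phi$ for \eqref{3.1.1}: one shows $\mathcal{A}\Phi = \mathrm{Id} + \mathcal{K}$ where $\mathcal{A}$ is the full operator and $\mathcal{K}$ collects the commutator terms $[\mathcal{A}, \zeta_j]$ together with the perturbation coming from the $C^3$ coordinate change (difference of the flattened metric from the flat one). The crucial point is that all the lower-order and commutator terms carry at least one extra derivative falling on $\zeta_j$ or a factor that is small in $L_\infty$ after localizing to a small ball, so on a short time interval — or after inserting a shift $\rho\partial_t \to \rho\partial_t + \lambda_0$ with $\lambda_0$ large — the operator norm of $\mathcal{K}$ on the maximal regularity space is strictly less than one. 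Then $\mathrm{Id} + \mathcal{K}$ is invertible by Neumann series, giving a solution operator; the exponential factor $e^{\gamma_1 T}$ in the estimate absorbs the shift $\lambda_0$ when returning to the original equation on $(0,T)$. The compatibility conditions \eqref{compati:3} — $\mathrm{div}\,\mathbf{u}_{0+} = g|_{t=0}$, $\mathbf{u}_{0+} - \mathbf{g}|_{t=0} \in J_q(\Omega_+)$, and, when $2/p + 1/q < 1$ so that the trace makes sense in $B^{1-1/p-N/(pq)}_{q,p}$, the tangential Neumann condition $(\nu\mathbf{D}(\mathbf{u}_{0+})\mathbf{n})_\tau = \mathbf{h}_\tau|_{t=0}$ — are exactly the obstructions to membership in the maximal regularity trace space, so they must be imposed to extend the data to $\mathbb{R}$ and apply the whole-line theory; extending $\mathbf{f}, g, \mathbf{g}, \mathbf{h}$ from $(0,T)$ to $\mathbb{R}$ with control of norms is a standard Sobolev extension argument once the traces at $t=0$ are compatible. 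Uniqueness follows by the same a priori estimate applied to the difference of two solutions with zero data, using that the $L_p$-$L_q$ estimate is linear and has no loss.

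The divergence equation $\mathrm{div}\,\mathbf{v} = g = \mathrm{div}\,\mathbf{g}$ with both representations of the right-hand side is handled as in \cite{S1, S2}: the condition $g = \mathrm{div}\,\mathbf{g}$ with $\mathbf{g} \in H^1_p((0,T), H^1_q)$ is what makes the $H^1_p((0,T), L_q)$-regularity of $g$ usable at the level of the time-derivative estimate, because one integrates by parts against the pressure test function and needs $\partial_t g$ paired only against $L_q$ functions. So in the reduction to model problems I would keep $g$ and $\mathbf{g}$ as a pair throughout, splitting $g = \mathrm{div}\,\mathbf{g}$ consistently under the partition of unity (which requires solving an auxiliary $\mathrm{div}$-problem on each patch, again using \thetag2 or a Bogovskii-type operator on the flattened domain).

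I expect the main obstacle to be the careful bookkeeping of the perturbation operator $\mathcal{K}$ in the boundary patches: one must verify that the difference between the true free-boundary operator in the flattened coordinates and the constant-coefficient half-space model — which involves first derivatives of the $C^3$ function $h$ defining $\Gamma$ and hence terms that are $C^2$ but not small pointwise unless the patch radius $a_1$ is taken small — can be split into a genuinely small $L_\infty$ part plus a lower-order part controlled by the shift $\lambda_0$, all while the third-order spatial regularity of $\mathbf{v}$ and the $H^1_q$-regularity of $\partial_t \mathbf{v}$ are preserved; this is where the $C^3$ (rather than $C^2$) hypothesis on $\Omega_+$ is genuinely used, since the highest-order coefficient perturbation in the equations for $\nabla^2 \mathbf{v}$ involves $\nabla h$ differentiated twice. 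The uniform (not merely bounded) geometry means the covering and partition of unity must have finite multiplicity with constants independent of the patch, so that summing the local estimates in $\ell_q$ and $\ell_p$ recovers the global norm without loss — this is routine given the uniform $C^3$ structure but needs to be stated.
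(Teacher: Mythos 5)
Your proposal is a reasonable sketch of the standard localization/parametrix machinery (cover, flatten, $\mathcal{R}$-bounded model solution operator, partition of unity, Neumann series against the commutator error $\mathcal{K}$), and that machinery is indeed what underlies the framework of \cite{S1,S2,S20} that the paper invokes. But it skips the one step that is actually new here and that the appendix is entirely devoted to: the $\mathcal{R}$-solvers $\mathcal{S}(\lambda), \mathcal{P}(\lambda)$ of \cite{S1,S20} are bounded into $H^2_q(\mathbb{R}^N_+)$ (see the $\mathcal{R}$-bound \thetag3, which controls $\lambda^{j/2}\mathcal{S}(\lambda)$ into $H^{2-j}_q$), so the parametrix you describe lands you in $L_p((0,T),H^2_q)\cap H^1_p((0,T),L_q)$, not in the $L_p H^3_q\cap H^1_p H^1_q$ space the theorem claims. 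You cannot simply assert that $\mathcal{K}$ has small norm ``on the maximal regularity space'' when the model operators do not map into that space to begin with; the third-order regularity is not an a priori given that the perturbation must merely preserve — it has to be produced.

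The paper produces it by a bootstrap on the half-space model, and this is the content you are missing. Starting from the known $H^2_q$ estimate \eqref{ap:2}, one differentiates \eqref{Ap:1} tangentially in $x_j$ ($j\le N-1$); by uniqueness $\partial_j\mathbf{u},\partial_j\mathfrak{p}$ solve the same problem with differentiated data, giving \eqref{ap:4}. The missing pure normal derivatives are then recovered from the structure of the equations: $\partial_N u_N$ (hence $\partial_N^3 u_N$) comes from the divergence constraint $\partial_N u_N=-\sum_{j<N}\partial_j u_j+g$; $\partial_N^2\mathfrak{p}$ comes from the $N$-th momentum equation $\lambda u_N-\mu\Delta u_N-\mu\partial_N g+\partial_N\mathfrak{p}=f_N$; and $\partial_N u_j$ for $j<N$ is shown to solve a scalar Dirichlet problem for $\lambda-\mu\Delta$ on $\mathbb{R}^N_+$ (after subtracting the boundary value $-\partial_j u_N+\mu^{-1}h_j$), whose resolvent estimate closes the argument as in \eqref{ap:7}. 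Only after this half-space bootstrap is established does one localize. Note also that the paper is candid (footnote) that even this is only a sketch, because passing from the resolvent bound to the genuine $\mathcal{R}$-bound with nonzero $\mathbf{f},g,\mathbf{g}$ needs an extra reduction; so asserting the parametrix ``follows from Weis's theorem'' is too fast even granting the bootstrap. Your remark on the role of $C^3$ is correct in spirit, but it explains why the hypothesis is \emph{necessary}, not how the third-order estimate is \emph{obtained}. I would rewrite the proposal around the tangential-differentiation bootstrap as the central lemma and relegate the covering/partition-of-unity argument to the closing paragraph, where it belongs.
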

\begin{remark} \thetag1
Theorem \ref{thm:3.1.1} has been proved by Shibata \cite{S1} in the 
standard case where
$$\bv \in H^1_p((0, T), L_q(\Omega_+)^N) \cap L_p((0, T), H^2_q(\Omega_+)^N).
$$
But, in Theorem \ref{thm:3.1.1}  one more additional regularity is stated, which is
necessary for our approach to prove the well-posedness of equations 
\eqref{mhd.2}. 
The idea of proving how to obtain third order regularity 
of the fluid vector field will be given in Appendix below. \\
\thetag2~ The uniqueness holds in the following sense. Let $\bv$  and $\fq$ with 
\begin{align*}
\bv & \in L_p((0, T), H^2_q(\Omega_+)^N) \cap
H^1_p((0, T), L_q(\Omega_+)^N), \quad 
 \fq  \in L_p((0, T),  H^1_q(\Omega) + \hat H^1_{q,0}(\Omega_+))
\end{align*}
satisfy the homogeneous equations: 
\begin{align*}
\rho\pd_t\bv - \DV\bT(\bv, \fq) = 0, \quad \dv\bv=0
&\quad \text{in $\Omega_+\times(0, T)$}, \\
\bT(\bv, \fq)\bn= 0
&\quad \text{on $\Gamma\times(0, T)$}, \\
%\bv=0&
%\quad\text{on $S_+\times(0, T)$}, \\
\bv|_{t=0}  = 0&
\quad\text{in $\Omega_+$},
\end{align*}
then $\bv=0$ and $\fq=0$. 
\end{remark}
\subsection{Two phase problem for the linear electro-magnetic vector field equations}
\label{subsec:3-2} 
This subsection is devoted to presenting the 
$L_p$-$L_q$ maximal regularity due to 
Frolova and Shibata \cite{FS1} for the linear electro-magnetic vector field equations.  The problem is formulated by a set of the following equations:
\begin{equation}\label{3.2.1}\begin{split}
\mu\pd_t\bH - \alpha^{-1}\Delta\bH = \bff 
\quad&\text{in $\dot\Omega\times(0, T)$}, \\
[[\alpha^{-1}\curl\bH]]\bn = \bh', \quad
[[\mu\dv\bH]] = h_N 
\quad&\text{on $\Gamma\times(0, T)$}, \\
[[\bH-<\bH, \bn>\bn]]= \bk', \quad 
[[\mu \bH\cdot\bn]] = k_N
\quad&\text{on $\Gamma\times(0, T)$}, \\
{\bn_-\cdot\bH_- = 0, \quad
(\curl\bH_-)\bn_- = 0} \quad
&{\text{on $S_-\times(0, T)$}}, \\
\bH|_{t=0} = \bH_0\quad
&\text{in $\dot\Omega$}.
\end{split}\end{equation}
To state the main result, we make a definition. 
\begin{dfn}\label{dfn:2} Let $\Omega=\Omega_+\cup \Gamma \cup \Omega_-$
 be a domain given in Introduction. 
 We say that $\Omega$ is a uniform $C^2$ domain with interface $\Gamma$ 
if there exist positive constants $a_1$, $a_2$, and $A$ such 
that the following assertion holds:  For any $x_0 = (x_{01},
\ldots, x_{0N}) \in \Gamma$ there exist a coordinate number
$j$ and a $C^2$ function $h(x')$ defined on $B'_{a_1}(x_0')$ such that 
$\|h\|_{H^k_\infty(B'_{a_1}(x_0'))} \leq A$ {for $k\leq2$} and
\begin{align*}
\Gamma \cap B_{a_2}(x_0)  &= \{x \in \BR^N \mid 
x_j  =  h(x') \enskip
(x' \in B_{a_1}'(x_0')) \} \cap B_{a_2}(x_0),\\
\Omega_\pm  \cap B_{a_2}(x_0) & = \{x \in \BR^N \mid \pm x_j > h(x') \enskip
(x' \in B_{a_1}(x_0')) \} \cap B_{a_2}(x_0), 
\end{align*}
and 
for any $x_0 = (x_{01},
\ldots, x_{0N}) \in S_-$ there exist a coordinate number
$j$ and a $C^2$ function $h(x')$ defined on $B'_{a_1}(x_0')$ such that 
$\|h\|_{H^k_\infty(B'_{a_1}(x_0'))} \leq A$ {for $k\leq2$} and
\begin{align*}
\Omega \cap B_{a_2}(x_0) & = \{x \in \BR^N \mid x_j > h(x') \enskip
(x' \in B_{a_1}(x_0')) \} \cap B_{a_2}(x_0), \\
S_- \cap B_{a_2}(x_0) & = \{x \in \BR^N \mid x_j = h(x') \enskip
(x' \in B_{a_1}'(x_0')) \} \cap B_{a_2}(x_0).
\end{align*}
Here,  we have set 
\begin{gather*}
y' = (y_1, \ldots, y_{j-1}, y_{j+1}, \ldots, y_N) \enskip (y \in \{x, x_0\}),
 \\
B'_{a_1}(x'_0)  = \{x' \in \BR^{N-1} \mid |x' - x'_0| < a_1\}, \\
B_{a_2}(x_0) = \{x \in \BR^N \mid |x-x_0| < a_2\}.
\end{gather*}
\end{dfn}

\begin{thm}\label{thm:3.2.1}
Let $1 < p, q < \infty$, ${2/p+N/q \not=1,2}$,  and $T > 0$.  Assume that $\Omega$ 
is a uniform $C^2$ domain with interface $\Gamma$.   
Then, there exists a $\gamma_2$ such that the following assertion holds:
Let $\bH_0 \in B^{2(1-1/p)}_{q,p}(\dot\Omega)$ and let 
$\bff \in L_p((0, T), L_q(\dot\Omega)^N)$, and let 
 $\tilde\bh = (\tilde\bh', \tilde h_N)$, and $\tilde\bk = (\tilde \bk', \tilde k_N)$
be  functions such that $\tilde \bh' = \bh'$, $\tilde h_N=h_N$, $\tilde \bk'
=\bk$, and $\tilde k_N = k_N$ for $t \in (0, T)$, where $\bh'$, $h_N$, $\bk'$, 
and $k_N$ are functions given in the right side of \eqref{3.2.1}, and 
the following conditions hold: 
\begin{align*}
e^{-\gamma t}\tilde\bh \in L_p(\BR, H^1_q(\Omega)^N) \cap 
H^{1/2}_p(\BR, L_q(\Omega)^N), \quad
e^{-\gamma t}\tilde\bk \in L_p(\BR, H^2_q(\Omega)^N) \cap 
H^1_p(\BR, L_q(\Omega)^N)
\end{align*}
for any $\gamma \geq \gamma_2$.  Moreover, we assume that $\bH_0$, $\bh$ and 
$\bk$ satisfy the following compatibility conditions:
\begin{align}
&[[\alpha^{-1}\curl\bH_0]]\bn=\bh'|_{t=0}, \quad
[[\mu\dv\bH_0]]=h_N|_{t=0} \quad\text{on $\Gamma$}, 
\quad(\curl\bH_{0-})\bn_- = 0
\quad\text{on $S_-$} \label{compati:4}
\intertext{provided ${2/p+N/q < 1}$;} 
&[[\bH_0-<\bH_0, \bn>\bn]]=\bk'|_{t=0}, \quad
[[\mu\bH_0\cdot\bn]]=k_N|_{t=0} \quad\text{on $\Gamma$}, \quad
{\bn_-}\cdot\bH_{0-} = 0 \quad\text{on $S_-$} \label{compati:5}
\end{align}
provided ${2/p+N/q < 2}$. 
Then, problem \eqref{3.2.1} admits a unique solution $\bH$
with
$$\bH
\in L_p((0, T), H^2_q(\dot\Omega)^N) \cap 
H^1_p((0, T), L_q(\dot\Omega)^N)$$
possessing the estimate:
$$
\|\pd_t\bH\|_{L_p((0, T), L_q(\dot\Omega))}
+ \|\bH\|_{L_p((0, T), H^2_q(\dot\Omega))} 
\leq Ce^{\gamma T}\{\|\bH_0\|_{B^{2(1-1/p)}_{q,p}(\dot\Omega)} 
+ F_H(\bff, \tilde\bh, \tilde\bk)\}
$$
with
\begin{align*}
F_H(\bff, \tilde\bh, \tilde\bk) &=  \|\bff\|_{L_p(\BR, L_q(\dot\Omega))} 
+ \|e^{-\gamma t}\tilde\bh\|_{L_p(\BR, H^1_q(\Omega))}
 + \|e^{-\gamma t}\tilde\bh\|_{H^{1/2}_p(\BR, L_q(\Omega))} \\
&+ \gamma^{1/2}\|e^{-\gamma t}\tilde\bh\|_{L_p(\BR, L_q(\Omega))} 
+ \|e^{-\gamma t}\tilde\bk\|_{L_p(\BR, H^2_q(\Omega))}
+ \|e^{-\gamma t}\pd_t\tilde\bk\|_{L_p(\BR, L_q(\Omega))}\}
\end{align*}
for any $\gamma \geq \gamma_2$ with some constant $C > 0$ independent of 
$\gamma$. 
\end{thm}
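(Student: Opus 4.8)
The statement is the linear maximal $L_p$-$L_q$ regularity theorem of Frolova--Shibata; we outline the $\CR$-solver proof. The plan is to reduce the time-dependent system to the generalized resolvent problem obtained by Laplace transform in $t$: for $\lambda$ in a sector $\Sigma_\epsilon=\{\lambda\in\BC\setminus\{0\}\mid|\arg\lambda|<\pi-\epsilon\}$ shifted by a large $\gamma_2>0$, construct a solution operator family $\CS(\lambda)$ for
\begin{align*}
\mu\lambda\bH - \alpha^{-1}\Delta\bH &= \bff \quad\text{in }\dot\Omega,\\
[[\alpha^{-1}\curl\bH]]\bn = \bh',\quad [[\mu\dv\bH]] &= h_N \quad\text{on }\Gamma,\\
[[\bH-<\bH,\bn>\bn]] = \bk',\quad [[\mu\bH\cdot\bn]] &= k_N \quad\text{on }\Gamma,\\
\bn_-\cdot\bH_- = 0,\quad (\curl\bH_-)\bn_- &= 0 \quad\text{on }S_-,
\end{align*}
and prove that $\CS(\lambda)$ together with $\lambda\mapsto\lambda^{j/2}\nabla^{2-j}\CS(\lambda)$ ($j=0,1,2$), acting on the data space, are $\CR$-bounded. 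Granting this, the Weis operator-valued Fourier multiplier theorem converts the $\CR$-bound into the asserted $L_p((0,T),H^2_q(\dot\Omega))\cap H^1_p((0,T),L_q(\dot\Omega))$ estimate. The fractional-in-time space $H^{1/2}_p(L_q)$ for $\tilde\bh$ (which enters through the first-order boundary operators $\curl$ and $\dv$) and the space $H^1_p(L_q)$ for $\tilde\bk$ (which enters through the zeroth-order, Dirichlet-type conditions on the tangential and normal parts of $\bH$) are exactly the trace spaces dictated by parabolic scaling, which is why they appear in $F_H$; the weight $\gamma^{1/2}$ in front of the $L_p(L_q)$-norm of $\tilde\bh$ accounts for the shift $\lambda=\gamma+i\tau$.

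First I would reduce to zero initial data: extend $\bH_{0\pm}$ from $\Omega_\pm$ to $\BR^N$, solve the whole-space vector heat equation with that datum (which has maximal regularity), subtract it, and note that the compatibility conditions \eqref{compati:4}--\eqref{compati:5} are exactly what guarantees that the boundary and transmission data for the difference vanish at $t=0$ in the correct trace sense, so they may be extended to $t\in\BR$ (by zero, or by reflection) without producing spurious temporal jumps; one then works on $\BR$ with the weight $e^{-\gamma t}$. Next I would localize, using a partition of unity subordinate to the uniform $C^2$ atlases of $\Gamma$ and $S_-$, reducing the resolvent problem to three model problems: (i) the whole space $\BR^N$, solved directly by the Fourier transform; (ii) the flat two-phase problem on $\BR^N_+\cup\BR^N_0\cup\BR^N_-$ with the transmission conditions above on $\{x_N=0\}$; (iii) the half-space $\BR^N_+$ with the perfect-wall conditions, which for $\bn=-e_N$ read $H_N=0$ and $\pd_N H_j=0$ ($j<N$) on $\{x_N=0\}$, i.e.\ a decoupled Dirichlet/Neumann system of scalar heat equations. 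The bent-space perturbations of (ii), (iii) are absorbed by choosing $\gamma_2$ large and summing a Neumann series against the (lower-order, small) commutator terms, which is also the reason $\gamma_2>0$ is needed on the non-bounded uniform domain.

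The heart of the matter is model problem (ii). I would apply the partial Fourier transform in $x'=(x_1,\dots,x_{N-1})$, solve the resulting second-order ODE system in $x_N$ on each half-line requiring decay, and impose the four transmission conditions; this yields a linear algebraic system whose coefficient matrix is a boundary symbol depending on $\xi'$ and on $A_\pm=(\mu_\pm\alpha_\pm\lambda+|\xi'|^2)^{1/2}$. The key estimate is that the Lopatinskii--Shapiro determinant of this system is bounded below uniformly for $\lambda\in\gamma_2+\Sigma_\epsilon$ and all $\xi'$, so that the solution formulas represent $\bH$ as superpositions of the elementary kernels $e^{-A_\pm|x_N|}$ times rational functions of $\xi'$, $\lambda$, $A_\pm$; each such building block is $\CR$-bounded on $L_q$ by Shibata's multiplier lemmas (symbols in the appropriate Mikhlin-type class, combined with the Fourier-multiplier $\CR$-bound and the Volevich trick, which is used both to gain a derivative on the $\bff$-term and to handle the $\bh$-data that only has $H^{1/2}_p(L_q)$ regularity in time). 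I expect the verification of the determinant condition to be the main obstacle: one must check, in the block structure where the normal component $\bH\cdot\bn$ and the tangential part $\bH_\tau$ partially decouple but are coupled through the $[[\mu\dv\bH]]$ and $[[\alpha^{-1}\curl\bH]]\bn$ jumps with $\mu_+\neq\mu_-$, that no cancellation degrades ellipticity. Once (i)--(iii) are $\CR$-solved, the partition-of-unity parametrix and the perturbation argument produce the $\CR$-bounded solution operator $\CS(\lambda)$ on $\dot\Omega$ for $\Re\lambda\geq\gamma_2$; Weis's theorem yields the maximal regularity estimate with the $e^{\gamma T}$ factor coming from undoing the weight; and uniqueness follows by applying the a priori estimate to the difference of two solutions with vanishing data (equivalently, from the analytic semigroup generated by the underlying operator).
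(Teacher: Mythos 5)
The paper itself does not prove Theorem \ref{thm:3.2.1}: it is quoted from Frolova and Shibata \cite{FS1, FS2}, and the remark following the theorem simply cites that work. Your outline reproduces exactly the strategy of the cited proof (and of the companion results in \cite{S1, S2, S20}): Laplace transform to a generalized resolvent problem, construction of an $\CR$-bounded solution operator family via localization to the whole space, the flat two-phase transmission model problem, and the half-space perfect-wall model problem, with the partial Fourier transform, the Lopatinskii--Shapiro determinant bound, the Volevich trick for the $H^{1/2}_p(L_q)$ data $\tilde\bh$, a Neumann-series perturbation argument absorbing the bent-boundary errors for $\gamma_2$ large, and finally the Weis multiplier theorem plus a reduction to zero initial data using the compatibility conditions \eqref{compati:4}--\eqref{compati:5}. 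Your identification of the perfect-wall conditions on a flat boundary as a decoupled Dirichlet/Neumann system, and of the trace spaces for $\tilde\bh$ and $\tilde\bk$, is correct.

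That said, what you have written is a proof program rather than a proof: the decisive technical content is named but not carried out. In particular, \thetag1 the explicit solution of the flat two-phase model problem and the uniform lower bound on its Lopatinskii--Shapiro determinant over $\lambda\in\gamma_2+\Sigma_\epsilon$ and all $\xi'$ (precisely where the coupling through $[[\mu\dv\bH]]$ and $[[\alpha^{-1}\curl\bH]]\bn$ with $\mu_+\neq\mu_-$, $\alpha_+\neq\alpha_-$ could degenerate) is the heart of \cite{FS1, FS2} and is left unverified, as you yourself acknowledge; \thetag2 the $\CR$-boundedness of the resulting operator families, not merely their uniform boundedness, must be checked against Shibata's multiplier classes, since Weis's theorem needs the $\CR$-bound; and \thetag3 the reduction to zero initial data requires an extension of $\bH_0\in B^{2(1-1/p)}_{q,p}(\dot\Omega)$ compatible with the jump conditions (the compatibility conditions only make the traces of the corrected data vanish at $t=0$ in the sense that the relevant temporal trace exists when $2/p+N/q<1$, resp.\ $<2$; the borderline exclusions $2/p+N/q\neq1,2$ enter exactly here and are not addressed in your sketch). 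As a comparison of routes there is nothing to report, since you and the paper rely on the same $\CR$-solver machinery; but as a standalone argument your proposal is conditional on the unproved determinant and $\CR$-bound estimates, which is where all the real work of \cite{FS1, FS2} lies.
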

\begin{remark} \thetag1~Theorem \ref{thm:3.2.1}
 was proved by Froloba and Shibata \cite{FS2}. 
\vskip0.5pc
\thetag2~ The uniqueness holds in the following sense.  Let 
$\bH$ with
$$\bH
\in L_p((0, T), H^2_q(\dot\Omega)^N) \cap 
H^1_p((0, T), L_q(\dot\Omega)^N)$$
satisfies the homogeneous equations: 
\begin{equation}\label{null:1}\begin{split}
\mu\pd_t\bH - \alpha^{-1}\Delta\bH = 0
\quad&\text{in $\dot\Omega\times(0, T)$}, \\
[[\alpha^{-1}\curl\bH]]\bn = 0, \quad
[[\mu\dv\bH]] =0
\quad&\text{on $\Gamma\times(0, T)$}, \\
[[\bH-<\bH, \bn>\bn]]= 0, \quad 
[[\mu \bH\cdot\bn]] = 0
\quad&\text{on $\Gamma\times(0, T)$}, \\
\bn_\pm\cdot\bH_\pm = 0, \quad
(\curl\bH_\pm)\bn_\pm = 0 \quad
&\text{on $S_\pm\times(0, T)$}, \\
\bH|_{t=0} = 0\quad
&\text{in $\dot\Omega$}.
\end{split}\end{equation}
then $\bH=0$ in $\dot\Omega\times(0, T)$. 
\end{remark}
%%%%%%%%%%%%%%%%%%%%%
\section{Estimate of non-linear terms}

Let $\bu_+$ and $\bH_\pm$ be $N$-vectors of functions such that
\begin{equation}\label{cond:1}\begin{aligned}
\bu_+ &\in H^1_p((0, T), H^1_q(\Omega_+)^N) \cap L_p((0, T), H^3_q(\Omega_+)^N), 
&\quad \bu_+|_{t=0} &= \bu_{0+}, \\
\bH_\pm &\in H^1_p((0, T), L_q(\Omega_\pm)^N) \cap L_p((0, T), H^2_q(\Omega_\pm)^N), 
&\quad \bH_\pm|_{t=0} &= \tilde\bH_{0\pm}, 
\end{aligned}\end{equation}
and we shall estimate nonlinear terms 
$\bN_1(\bu_+, \bH_+),\ldots, \bN_9(\bu, \bH)$ appearing in the right side of
equations \eqref{mhd.3}.  Here, $\bw = \bw_+$ for $x \in \Omega_+$ and $\bw=\bw_-$
for $x \in \Omega_-$ ($\bw \in \{\bu, \bH\}$) and $\bu_-$ is an extension of $\bu_+$ defined in
\eqref{ext:1}.  For notational simplicity, we set
\begin{gather*}
E^1_T(\bu_+) = \|\pd_t\bu_+\|_{L_p((0, T), H^1_q(\Omega_+))} + 
\|\bu_+\|_{L_p((0, T), H^3_q(\Omega_+))}, \\
E^{2, \pm}_T(\bH_\pm)  = \|\pd_t\bH_\pm\|_{L_p((0, T), L_q(\Omega_\pm))}
+ \|\bH_\pm\|_{L_p((0, T), H^2_q(\Omega_\pm))},
\quad E^2_T(\bH) = E^{2,+}_T(\bH_+) + E^{2,-}_T(\bH_-).
\end{gather*}
Moreover, let $\bu^i_+$ and $\bH^i_\pm$ ($i=1,2$) be $N$-vectors of functions such that
\begin{equation}\label{cond:1}\begin{aligned}
\bu^i_+ &\in H^1_p((0, T), H^1_q(\Omega_+)^N) \cap L_p((0, T), H^3_q(\Omega_+)^N), 
&\quad \bu^i_+|_{t=0} &= \bu_{0+}, \\
\bH^i_\pm &\in H^1_p((0, T), L_q(\Omega_\pm)^N) \cap L_p((0, T), H^2_q(\Omega_\pm)^N), 
&\quad \bH^i_\pm|_{t=0} &= \tilde\bH_{0\pm}.
\end{aligned}\end{equation}
We also consider the differences: $\CN_1= \bN_1(\bu^1_+, \bH^1_+) - \bN_1(\bu_+^2, \bH_+^2), 
\ldots, \CN_9= \bN_9(\bu^1, \bH^1) - \bN_9(\bu^2, \bH^2)$. 
Here, $\bw = \bw_+$ for $x \in \Omega_+$ and $\bw=\bw_-$
for $x \in \Omega_-$ ($\bw \in \{\bu^1, \bu^2, \bH^1, \bH^2\}$) and $\bu_-^i$ is  
an extension
of $\bu^i_+$  defined in \eqref{ext:1}. 
For notational simplicity, we assume that 
\begin{equation}\label{initial:4.1}
\|\bu_{0+}\|_{B^{3-2/p}_{q,p}(\Omega_+)} + \|\tilde\bH_{0+}\|_{B^{2(1-1/p)}_{q,p}(\Omega_+)}
+ \|\tilde\bH_{0-}\|_{B^{2(1-1/p)}_{q,p}(\Omega_-)} \leq B
\end{equation}
for some constant $B > 0$.  
In what follows, we assume that 
$2 < p <\infty$, $N < q < \infty$ and $2/p + N/q < 1$.  
To estimate nonlinear terms, we use the following inequalities which follows from Sobolev's inequality.
\begin{equation}\label{4.3}\begin{aligned}
\|f\|_{L_\infty(\Omega_\pm)} & \leq C\|f\|_{H^1_q(\Omega_\pm)}, \\ 
\|fg\|_{H^1_q(\Omega_\pm)} & \leq C\|f\|_{H^1_q(\Omega_\pm)}
\|g\|_{H^1_q(\Omega_\pm)},  \\
\|fg\|_{H^2_q(\Omega_\pm)} & \leq C(\|f\|_{H^2_q(\Omega_\pm)}
\|g\|_{H^1_q(\Omega_\pm)}
+ \|f\|_{H^1_q(\Omega_\pm)}
\|g\|_{H^2_q(\Omega_\pm)}),  \\
\|fg\|_{W^{1-1/q}_q(\Gamma)} & \leq C\|f\|_{W^{1-1/q}_q(\Gamma)}
\|g\|_{W^{1-1/q}_q(\Gamma)},  \\
\|fg\|_{W^{2-1/q}_q(\Gamma)} & \leq C(\|f\|_{W^{2-1/q}_q(\Gamma)}
\|g\|_{W^{1-1/q}_q(\Gamma)}
+ \|f\|_{W^{1-1/q}_q(\Gamma)}
\|g\|_{W^{2-1/q}_q(\Gamma)}).
\end{aligned}\end{equation}
By H${\rm \ddot o}$lder's inequality and \eqref{ext:1}, we have
\begin{equation}\label{4.4}\begin{aligned}
\|\Psi_\bw&\|_{L_\infty((0, T), H^2_q(\Omega))} \leq CT^{1/p'}E^1_T(\bw_+)
\quad\text{for $\bw \in \{\bu, \bu^1, \bu^2\}$}, \\
&\|\Psi_{\bu^1} - \Psi_{\bu^2}\|_{L_\infty((0, T), H^2_q(\Omega))} 
\leq CT^{1/p'}E^1_T(\bu_+^1-\bu_+^2).
\end{aligned}\end{equation}
In view of  \eqref{4.3} and \eqref{4.4}, choosing $T>0$ so small, we may assume that 
\begin{equation}\label{2.1**}
\|\Psi_\bw\|_{L_\infty((0, T), L_\infty(\Omega))} \leq \delta
\quad\text{for $\bw \in \{\bu, \bu^1, \bu^2\}$}. 
\end{equation}
In the following, for simplicity, choosing $T > 0$ small,  we also assume that 
\begin{equation}\label{4.4.2} T^{1/p'}(E^1_T(\bw) +B) \leq 1
\quad\text{for $\bw \in \{\bu_+, \bu^1_+, \bu^2_+\}$}.
\end{equation}
We may assume that the unit outer normal $\bn$ to $\Gamma$ is defined on
$\BR^N$ and $\|\bn\|_{H^2_\infty(\BR^N)} < \infty$ because $\Omega_+$
is a uniform $C^3$ domain.  Thus, setting 
$$[\CA_i(\Psi_\bw)]_{T,2} := \|\CA_i(\Psi_\bw)\|_{L_\infty((0, T), L_\infty(\Omega))}
+ \|\nabla\CA_i(\Psi_\bw)\|_{L_\infty((0, T), H^1_q(\Omega))},
$$
 by \eqref{2.1**}, \eqref{4.3}, \eqref{4.4}, and \eqref{4.4.2} we have
\begin{equation}\label{4.4.3}
[\CA_i(\Psi_\bw)]_{T,2} \leq C
\end{equation}
with some constant $C > 0$ for $i = 1, \ldots, 9$ and $\bw \in \{\bu, \bu^1, \bu^2\}$,
where we have set $\CA_5(\Psi_\bw) = (\CA_{5+}(\Psi_\bw), \CA_{5-}(\Psi_\bw))$
and $\CA_6(\Psi_\bw) = (\CA_{61}(\Psi_\bw), \CA_{62}(\Psi_\bw))$. 

%%%%%%%% N_1%%%%%%%%
We first consider $\bN_1(\bu_+, \bH_+)$ and $\CN_1 = \bN_1(\bu_+^1, \bH_+^1) - \bN_1(\bu_+^2, \bH_+^2)$. 
Recall \eqref{2.12}.  Applying \eqref{4.3} and using \eqref{4.4}, we have
\begin{equation}\label{4.5.0}\begin{aligned}
&\|\Psi_\bu\otimes(\pd_t\bu_+, \nabla^2\bu_+)
\|_{L_p((0, T), H^1_q(\Omega_+))}
\leq CT^{1/p'}E^1_T(\bu_+)^2, \\
&\|\bu_+\otimes\nabla\bu_+\|_{L_p((0, T), H^1_q(\Omega_+))}
\leq CT^{1/p}\|\bu_+\|^2_{L_\infty((0, T), H^2_q(\Omega_+))}, \\
&{\|\nabla\Psi_\bu\otimes\nabla\bu_+\|_{L_p((0, T), H^1_q(\Omega_+))}
\leq CT^{1/p'}E^1_T(\bu_+)^2}, \\
&\|\bH_+\otimes\nabla\bH_+\|_{{L}_p((0, T), H^1_q(\Omega_+))}
\leq C\|\bH_+\|_{L_\infty((0, T), H^1_q(\Omega_+))}
\|\bH_+\|_{L_p((0, T), H^2_q(\Omega_+))}.
\end{aligned}\end{equation}

By real interpolation theory, we see that 
\begin{equation}\label{real.int}\begin{aligned}
&\sup_{t \in (0, T)} \|\bv_\pm({\cdot, t})\|_{B^{\ell+2-2/p}_{q,p}(\Omega_\pm)}
\\
&\quad \leq C(\|\bv|_{t=0}\|_{B^{\ell+2-2/p}_{{q,p}}(\Omega_\pm)}
+ \|\pd_t\bv_\pm\|_{L_p((0, T), H^\ell_q(\Omega_\pm))}
+ \|\bv_\pm\|_{L_p((0, T), H^{\ell+2}_q(\Omega_\pm))})
\end{aligned}\end{equation} 
($\ell=0,1$){. }
{In order to prove this, we make a few preparations.  
For a $X$-valued function $f(\cdot, t)$ defined for $t \in (0, T)$, where $X$ is a Banach space, we set
\begin{equation}\label{4.8}
e_T[f](\cdot, t) = \begin{cases} 0 & \quad\text{for $t < 0$}, \\
f(\cdot, t)  & \quad\text{for $0 < t < T$}, \\
f(\cdot, 2T-t)  & \quad\text{for $T < t < 2T$}, \\
0  & \quad\text{for $t > 2T$}.
\end{cases}
\end{equation}
Then, $e_T[f](\cdot, t) = f(\cdot, t)$ for $t \in (0, T)$.  If $f|_{t=0}=0$, then 
\begin{equation}\label{4.9}
\pd_te_T[f](\cdot, t) = \begin{cases} 0 & \quad\text{for $t < 0$}, \\
\pd_tf(\cdot, t)  & \quad\text{for $0 < t < T$}, \\
-(\pd_tf)(\cdot, 2T-t)  & \quad\text{for $T < t < 2T$}, \\
0  & \quad\text{for $t > 2T$}.
\end{cases}
\end{equation}
In partiular, we have
\begin{equation}\label{4.10}\begin{aligned}
\|e_T[f]\|_{L_p(\BR, X)} & \leq 2\|f\|_{L_p((0, T), X)}, \\
\|\pd_te_T[f]\|_{L_p(\BR, X)} & \leq 2\|\pd_tf\|_{L_p((0, T), X)}.
\end{aligned}\end{equation}}
{Let $\bw_\pm$ be a $N$-vector of function defined on $\Omega_\pm$ 
and let $E_\mp[\bw_\pm]$ be an extension of $\bw_\pm$ to $\Omega_\mp$ for which 
\begin{gather}
E_\mp[\bw_\pm] = \bw_\pm\quad\text{ for $x \in \Omega_\pm$}, 
\nonumber \\
\lim_{x\to x_0 \atop x \in \Omega_\mp}\pd_x^\alpha E_\mp[\bw_\pm](x, t)
= \lim_{x\to x_0 \atop x \in \Omega_\pm} \pd_x^\alpha \bw_\pm(x, t)
\quad\text{for $|\alpha| \leq 1$ and $x_0 \in \Gamma$}, \label{jump:1} \\
\|E_\mp[\bw_\pm](\cdot, t)\|_{H^i_q(\Omega)}
\leq C\|\bw_\pm(\cdot, t)\|_{H^i_q(\Omega_\pm)}
\quad\text{for $i=0,1,2$}. \nonumber
\end{gather}
Let $E_{\BR^N}[E_\mp[\bw_{\pm}]]$ 
be an extension of $E_\mp[\bw_{\pm}]$ to $\BR^N$ for which
\begin{equation}\label{4.11}\begin{aligned}
E_{\BR^N}[E_\mp[\bw_{\pm}]] &= E_\mp[\bw_{\pm}] \quad\text{on $\Omega$}, \\
\|E_{\BR^N}[E_\mp[\bw_{\pm}]]\|_{B^{\ell+2(1-1/p)}_{q,p}(\BR^N)}
& \leq C\|\bw_{\pm}\|_{B^{\ell+2(1-1/p)}_{q,p}(\Omega_\pm)} \quad (\ell=0,1). 
\end{aligned}\end{equation}
For $\bv_0 \in B^{\ell+2(1-1/p)}_{q,p}(\BR^N)$, let 
\begin{align} \label{semigroup}
T(t)\bv_0 = e^{(-1 + \Delta)t}\bv_0
\end{align}
be a $C^0$ analytic semigroup satisfying the condition:
$T(0)\bv_0 = \bv_0$ and possessing the estimate: 
\begin{equation} \label{4.12} \begin{aligned}
\|T(\cdot)\bv_0\|_{L_p((0, \infty), H^{\ell+2}_q(\BR^N))}
+ \|\pd_tT(\cdot)\bv_0\|_{L_p((0, \infty), H^\ell_q(\BR^N))}
\leq C\|\bv_0\|_{B^{\ell+2(1-1/p)}_{q,p}(\BR^N)} 
\end{aligned}\end{equation}
for $\ell=0,1,2$. Let $\bw_\pm$ be defined on $\Omega_\pm\times(0,T)$ and set $\bw_{0\pm}=\bw_\pm|_{t=0}$. 
Let $\psi(t)$ be a function which equals one for $t > -1$ and zero for $t < -2$
and let 
\begin{equation}\label{4.13*}\begin{aligned}
\CT[E_\mp[\bw_{0\pm}]](t) & = \psi(t)T(|t|)E_{\BR^N}[E_\mp[\bw_{0\pm}]], \\
\end{aligned}\end{equation}}
{Then, by \eqref{4.11} and \eqref{4.12} 
\allowdisplaybreaks
\begin{align}
\CT[E_\mp[\bw_{0\pm}]](0) & = E_\mp[\bw_{0\pm}] \quad\text{in $\Omega$},  
\nonumber\\
\|\CT[E_\mp[\bw_{0\pm}]]\|_{L_p(\BR, H^{2+\ell}_q(\Omega))}
&+ \|\pd_t\CT[E_\mp[\bw_{0\pm}]]\|_{L_p(\BR, H^\ell_q(\Omega))}
\leq C\|\bw_0\|_{B^{2(1-1/p)+\ell}_{q,p}(\dot\Omega)}. \label{4.13a}
\end{align}
Set 
\begin{equation}\label{4.14}\begin{aligned}
\CE[E_\mp[ \bw_\pm]] &= \CT[E_\mp[\bw_{0\pm}]] + e_T[E_\mp[\bw_\pm] - 
\CT[E_\mp[\bw_{0\pm}]]]. 
\end{aligned}\end{equation}
Obviously, $\CE[E_\mp[\bw_\pm]] = E_\mp[\bw_\pm]$ for $t \in (0, T)$. 
Then, \eqref{real.int} is guaranteed by \eqref{4.10}, \eqref{jump:1} and \eqref{4.13a} as follows: 
\begin{align*}
&\sup_{t \in (0, T)} \|\bv_\pm({\cdot, t})\|_{B^{\ell+2-2/p}_{q,p}(\Omega_\pm)}
= \sup_{t \in (0, T)} \|\CE[E_\mp[\bv_\pm]]({\cdot, t})\|_{B^{\ell+2-2/p}_{q,p}(\BR^N)}
\\
&\quad \leq C(\|\bv|_{t=0}\|_{B^{\ell+2-2/p}_{{q,p}}(\Omega_\pm)}
+ \|\pd_t\bv_\pm\|_{L_p((0, T), H^\ell_q(\Omega_\pm))}
+ \|\bv_\pm\|_{L_p((0, T), H^{\ell+2}_q(\Omega_\pm))}). 
\end{align*}}

{Combining \eqref{real.int} with \eqref{initial:4.1}} leads to 
\begin{equation}\label{4.5}\begin{aligned}
\|\bw_+\|_{L_\infty((0, T), H^2_q(\Omega_+))} & \leq C(B+E^1_T(\bw_+))
&\quad&\text{for $\bw \in \{\bu, \bu^1, \bu^2\}$}, \\
\|\bz_\pm\|_{L_\infty((0, T), H^1_q(\Omega_\pm))} & \leq C(B+E^{2,\pm}_T(\bz_\pm))
&\quad&\text{for $\bz \in \{\bH, \bH^1, \bH^2\}$},
\end{aligned}\end{equation}
because $B^{\ell+2-2/p}_{q,p}(\Omega_\pm)$ is continuously imbedded into 
$H^{\ell+1}_q(\Omega_\pm)$ as follows from $2-2/p > 1$, that is, $2 < p < \infty$. 
Moreover, 
\begin{equation}\label{4.5.1}
\|\bH_+\|_{L_\infty((0, T), H^1_q(\Omega_+))}
\leq C(B + T^{s/p'(1+s)}E^{2,+}_T(\bH_+)),
\end{equation}
provided  $0 < T < 1$. 
In fact, we write $\bH_+ = \tilde\bH_{0+} + \bv$ with $\bv = \bH_+ - \tilde\bH_{0+}$. 
Since $\bv|_{t=0}=0$, we have
$$\|\bv(\cdot, t)\|_{L_q(\Omega_+)} \leq \int^t_0\|\pd_s\bH_+(\cdot, s)\|_{L_q(\Omega_+)} \leq T^{1/p'}E^{2,+}_T(\bH_+)
$$
for $t \in (0, T)$. On the other hand,  choosing $s \in (0, 1- 2/p)$ yields that 
 $B^{2(1-1/p)}_{q,p}(\Omega_+)$
is continuously imbedded into $W^{1+s}_q(\Omega_+)$, 
and so by \eqref{real.int} 
$$\|\bv\|_{W^{1+s}_q(\Omega_+)} \leq C\|\bv\|_{B^{2(1-1/p)}_{q,p}(\Omega_+)}
\leq C(B + E^{2,+}_T(\bH_+)).
$$
Since $\|\bv\|_{H^1_q(\Omega_+)} 
\leq C_s\|\bv\|_{L_q(\Omega_+)}^{s/(1+s)}\|\bv\|_{W^{1+s}_q(\Omega_+)}^{1/(1+s)}
$, we have \eqref{4.5.1} provided $0 < T < 1$. 

Combining \eqref{4.5.0}, \eqref{4.4.3}, \eqref{4.5}, and \eqref{4.5.1}, we have
\begin{equation}\label{n:1}\begin{aligned}
&\|\bN_1(\bu_+, \bH_+)\|_{L_p((0, T), H^1_q(\Omega_+))}\\
&\quad\leq C(T^{1/p'}E^1_T(\bu)^2 + T^{1/p}(B+E^1_T(\bu_+))^2
+ (B+T^{s/p'(1+s)}E^{2,+}_T(\bH_+))E^{2,+}_T(\bH_+)).
\end{aligned}\end{equation}
We next consider $\CN_1 = \bN_1(\bu_+^1, \bH^1_+) - \bN_1(\bu_+^2, \bH_+^2)$, which is represented
by $\CN_1 = \CN_{11} + \CN_{12}$ with
\begin{align*}
\CN_{11}&= (\CA_1(\Psi_{\bu^1}) - \CA_1(\Psi_{\bu^2}))(K^1_1, K^1_2, K^1_3, K^1_4), \\
\CN_{12}&=\CA_1(\Psi_{\bu^2})(K^1_1-K^2_1, K^1_2-K^2_2, K^1_3-K^2_3, K^1_4-K^2_4), \\
K^i_1 &= \Psi_{\bu^i}\otimes(\pd_t\bu_+^i, \nabla^2\bu_+^i), 
\quad K^i_2 = \bu_+^i\otimes\nabla\bu_+^i, \quad 
K^i_3 = \nabla\Psi_{\bu^i}\otimes\nabla\bu^i_+, \quad 
K^i_4 = \bH^i_+\otimes\nabla\bH^i_+. 
\end{align*}
Representing 
\begin{equation}\label{4.6.0}\CA_i(\Psi_{\bu^1}) -\CA_i(\Psi_{\bu^2}) = \int^1_0(d_{\bk}\CA_i)(\Psi_{\bu^2} + \theta(
\Psi_{\bu^1}-\Psi_{\bu^2}))\,d\theta(\Psi_{\bu^1}-\Psi_{\bu^2}),
\end{equation}
by \eqref{4.3}, \eqref{2.1**}, \eqref{4.4}, and \eqref{4.4.2}, we have
\begin{equation}\label{4.6.1}
\|\CA_i(\Psi_{\bu^1}) - \CA_i(\Psi_{\bu^2})\|_{L_\infty((0, T), H^2_q(\Omega_+))}
\leq CT^{1/p'}\|\bu^1_+-\bu^2_+\|_{L_p((0, T), H^3_q(\Omega_+))}
\end{equation}
for $i=1, \ldots, 9$. Estimating $\|(K^1_1, \ldots, K^1_4)\|_{L_p((0, T), H^1_q(\Omega_+))}$ in the 
same manner as in proving \eqref{n:1}, we have
\begin{equation}\label{nd:1.1} \|\CN_{11}\|_{L_p((0, T), H^1_q(\Omega_+))}
\leq CT^{1/p'}(B^2 + E^1_T(\bu^1_+)^2 + E^{2,+}_T(\bH^1_+)^2)E^1_T(\bu^1_+-\bu^2_+),
\end{equation}
where we have used $0 < T < 1$ and $(B+T^{s/p'(1+s)}E^{2,+}_T(\bH_+))E^{2,+}_T(\bH_+)
\leq (1/2)B^2 + (3/2)E^{2,+}_T(\bH_+)^2$.  To estimate $\CN_{12}$ we use the fact:
\begin{equation}\label{4.6.2} |f_1f_2-g_1g_2| \leq |f_1-g_1||f_2| + |f_2-g_2||g_1|.
\end{equation}
Thus, by \eqref{4.3}, \eqref{4.4}, \eqref{4.4.2}, 
 \eqref{4.5}, and \eqref{4.5.1}, we have
\begin{align*}
&\|K^1_1-K^2_1\|_{L_p((0, T), H^1_q(\Omega_+))} \\
&\quad \leq C(\|\Psi_{\bu^1}-\Psi_{\bu^2}\|_{L_\infty(0, T), H^1_q(\Omega_+))}
\|(\pd_t\bu^1_+, \nabla^2\bu^1_+)\|_{L_p((0, T), H^1_q(\Omega_+))} \\
&\hskip2cm 
+\|\Psi_{\bu^2}\|_{L_\infty((0, T), H^1_q(\Omega_+))}E^1_T(\bu_+^1-\bu_+^2))\\
&\quad\leq CT^{1/p'}(E^1_T(\bu^1_+) + E^2_T(\bu^2_+))E^1_T(\bu_+^1-\bu_+^2);\\
&\|K^1_2-K^2_2\|_{L_p((0, T), H^1_q(\Omega_+))} 
 \leq CT^{1/p}( E^1_T(\bu^1_+) + E^2_T(\bu^2_+)+B)E^1_T(\bu^1_+-\bu^2_+); \\
&\|K^1_3-K^2_3\|_{L_p((0, T), H^1_q(\Omega_+))} 
\leq CT^{1/p'}(E^1_T(\bu^1_+) + E^2_T(\bu^2_+)+B)E^1_T(\bu^1_+-\bu^2_+); \\
&\|K^1_4-K^2_4\|_{L_p((0, T), H^1_q(\Omega_+))} \\
&\quad\leq C(\|\bH^1_+-\bH^2_+\|_{L_\infty((0, T), H^1_q(\Omega_+))}
E^{2,+}_T(\bH^1_+) + \|\bH^2_+\|_{L_\infty((0, T), H^1_q(\Omega_+))}E^{2,+}_T(\bH^1_+-\bH^2_+))\\
&\quad \leq C(T^{\frac{s}{p'(1+s)}}E^{2,+}_T(\bH^1_+-\bH^2_+)E^{2,+}_T(\bH^1_+)
 +(B+T^{\frac{s}{p'(1+s)}}E^{2,+}_T(\bH^2_+))E^{2,+}_T(\bH^1_+-\bH^2_+)\\
 &\quad \leq C\{B+T^{\frac{s}{p'(1+s)}}(E^{2,+}_T(\bH^1_+)
+E^{2,+}_T(\bH^2_+))\}E^{2,+}_T(\bH^1_+-\bH^2_+),
\end{align*}
which, combined with \eqref{4.4.3} and  \eqref{nd:1.1}  gives that
\begin{equation}\label{nd.1}\begin{aligned}
\|\CN_1\|_{L_p((0, T), H^1_q(\Omega_+))}
& \leq C[\{
T^{1/p'}(B^2 + E^1_T(\bu^1_+)^2 + E^1_T(\bu^2_+)^2 + E^{2,+}_T(\bH^1_+)^2)
+ \\
&
+ (T^{1/p'}+T^{1/p})(E^1_T(\bu^1_+) + E^1_T(\bu^2_+)+B)\}E^1_T(\bu^1_+-\bu^2_+) \\
&+ (B+T^{\frac{s}{p'(1+s)}}(E^{2,+}_T(\bH^1_+) + E^{2,+}_T(\bH^2_+)))
E^{2,+}_T(\bH^1_+-\bH^2_+)]. 
\end{aligned}\end{equation}
We now estimate $N_2$, $\bN_3$, $\CN_2$ and $\CN_3$. By \eqref{4.3}, \eqref{4.4}, 
\eqref{4.5},  and  \eqref{4.4.3},
to the formula of $\bN_2(\bu_+, \bH_+)$ given in \eqref{2.11}, we have
\begin{equation}\label{n:2.1} \begin{aligned}
\|N_2(\bu_+)\|_{L_p((0, T), H^2_q(\Omega_+)} & \leq C
[\CA_2(\Psi_{\bu})]_{T,2}\|\Psi_\bu\|_{L_\infty((0, T), H^2_q(\Omega_+))}
\|\nabla\bu_+\|_{L_p((0, T), H^2_q(\Omega_+))} \\
& \leq CT^{1/p'}E^1_T(\bu_+)^2. 
\end{aligned}\end{equation}
By \eqref{2.1**}, \eqref{4.4}, \eqref{4.6.0}, \eqref{4.4.2},  and \eqref{4.5}, 
\begin{equation}\label{4.6.3}\begin{aligned}
\|\pd_t\CA_i(\Psi_\bw)&\|_{L_\infty((0, T), H^1_q(\Omega))} 
\leq C\|\bu_+\|_{L_\infty((0, T), H^2_q(\Omega_+))} \leq C{(B+E^1_T(\bw_+))}
\quad\text{for $\bw \in \{\bu, \bu^1, \bu^2\}$}; \\
\|\pd_t(\CA_i(\Psi_{\bu^1})&-\CA_i(\Psi_{\bu^2}))\|_{L_\infty((0, T), H^1_q(\Omega))} \leq
C(\|\bu^1_+-\bu^2_+\|_{L_\infty((0, T), H^2_q(\Omega_+))} \\
&\quad + (\|\bu^1_+\|_{L_\infty((0, T), H^2_q(\Omega_+))} + 
\|\bu^2_+\|_{L_\infty((0, T), H^2_q(\Omega_+))})\|\Psi_{\bu^1}-\Psi_{\bu^2}\|_{L_\infty((0, T), H^1_q(\Omega_+))})
\\
& \leq CE^1_T(\bu^1_+-\bu^2_+)
\end{aligned}\end{equation}
By \eqref{4.3}, \eqref{4.4.2}, \eqref{4.4}, 
\eqref{4.5}, \eqref{4.4.3}, and \eqref{4.6.3}, we have
\allowdisplaybreaks
\begin{align*}
\|\pd_t{N}_2&(\bu_+)\|_{L_p((0, T), L_q(\Omega_+))} \\
&\leq C\{\|\pd_t\CA_2(\Psi_{\bu})\|_{L_\infty((0, T), H^1_q(\Omega))}
\|\Psi_\bu\|_{L_\infty((0, T), H^1_q(\Omega_+))}
T^{1/p}\|\nabla\bu_+\|_{L_\infty((0, T), L_q(\Omega_+))}\\
&+
 T^{1/p}[\CA_2(\Psi_{\bu})]_{T,2}\|\pd_t\Psi_\bu\|_{L_\infty((0, T), H^1_q(\Omega_+))}
\|\nabla\bu_+\|_{L_\infty((0, T), L_q(\Omega_+))}
\\
& + [\CA_2(\Psi_{\bu})]_{T,2}\|\Psi_\bu\|_{L_\infty((0, T), H^1_q(\Omega_+))}
\|\pd_t\nabla\bu_+\|_{L_p((0, T), L_q(\Omega_+))}\}
\\
& \leq C\{T^{1/p'}E^1_T(\bu_+)^2 +T^{1/p}(B+E^1_T(\bu_+))^2\};\\
{\|\bN_3(\bu}&{{}_+)\|_{L_p((0, T), H^1_q(\Omega_+))}} \\
&{\leq CT^{1/p}[\CA_3(\Psi_{\bu})]_{T,2}
 \|\Psi_\bu\|_{L_\infty((0, T), H^1_q(\Omega_+))}\|\bu_+\|_{L_\infty((0, T), H^1_q(\Omega_+))}\}} \\
& {\leq CT^{1/p}(B+E^1_T(\bu_+))^2,} \\
\|\pd_t\bN_3&(\bu_+)\|_{L_p((0, T), H^1_q(\Omega_+))} \\
&\leq C\{\|\pd_t\CA_3(\Psi_{\bu})\|_{L_\infty((0, T), H^1_q(\Omega))}
\|\Psi_\bu\|_{L_\infty((0, T), H^1_q(\Omega_+))}
T^p\|\bu_+\|_{L_\infty((0, T), H^1_q(\Omega_+))}\\
&\quad + T^{1/p}[\CA_3(\Psi_{\bu})]_{T,2}
\|\pd_t\Psi_\bu\|_{L_\infty((0, T), H^1_q(\Omega_+))}\|\bu_+\|_{L_\infty((0, T), H^1_q(\Omega_+))}
\\
&\quad  +[\CA_3(\Psi_{\bu})]_{T,2}
 \|\Psi_\bu\|_{L_\infty((0, T), H^1_q(\Omega_+))}\|\pd_t\bu_+\|_{L_p((0, T), H^1_q(\Omega_+))}\}
\\
& \leq C\{T^{1/p'}E^1_T(\bu_+)^2 +T^{1/p}(B+E^1_T(\bu_+))^2\},
\end{align*}
which, combined with \eqref{n:2.1}, yields that
\begin{equation}\label{n:2}\begin{aligned}
&\|N_2(\bu_+)\|_{L_p((0, T), H^2_q(\Omega_+))} + \|\pd_tN_2(\bu_+)\|_{L_p((0, T), L_q(\Omega_+))}
+ \|\bN_3(\bu_+)\|_{H^1_p((0, T), H^1_q((\Omega_+))}\\
&\quad \leq C\{T^{1/p'}E^1_T(\bu_+)^2 +T^{1/p}(B+E^1_T(\bu_+))^2\}.
\end{aligned}\end{equation}
To estimate $\CN_2$ and $\CN_3$, we write 
$\CN_2 = \CN_{21} + \CN_{22}$ and $\CN_3 = \CN_{31}+\CN_{32}$ with 
\begin{align*}
\CN_{21} & = (\CA_2(\Psi_{\bu^1}) -\CA_2(\Psi_{\bu^2}))\Psi_{\bu^1}\otimes\nabla\bu^1_+, \quad
\CN_{22}  = \CA_2(\Psi_{\bu^2})(\Psi_{\bu^1}\otimes\nabla\bu^1_+-\Psi_{\bu^2}\otimes\nabla\bu^2_+)\\
\CN_{31} & = (\CA_3(\Psi_{\bu^1}) -\CA_3(\Psi_{\bu^2}))\Psi_{\bu^1}\otimes\bu^1_+, \quad
\CN_{32}  = \CA_3(\Psi_{\bu^2})(\Psi_{\bu^1}\otimes\bu^1_+-\Psi_{\bu^2}\otimes\bu^2_+).
\end{align*}
Employing the same argument as in proving \eqref{n:2} and using \eqref{4.4.2},
\eqref{4.6.1} and \eqref{4.6.3}, 
we have
\begin{equation}\label{nd:2.1}\begin{aligned}
\|\CN_{21}\|_{L_p((0, T), H^2_q(\Omega_+))} &
\leq CT^{1/p'}E^1_T({\bu^1_+})E^1_T(\bu^1_+-\bu^2_+), \\
\|\pd_t\CN_{21}\|_{L_p((0, T), L_q(\Omega_+))}
&\leq C(T^{1/p'}E^1_T(\bu^1_+) + T^{1/p}(B+E^1_T(\bu^1_+)))
E^1_T(\bu^1_+-\bu^2_+); \\
{\|\CN_{31}\|_{L_p((0, T), H^1_q(\Omega_+))}}
&\leq {CT(B+E^1_T(\bu^1_+))E^1_T(\bu^1_+-\bu^2_+),} \\
\|\pd_t\CN_{31}\|_{L_p((0, T), H^1_q(\Omega_+))}
&\leq C(T^{1/p'}E^1_T(\bu^1_+) + T^{1/p}(B+E^1_T(\bu^1_+)))
E^1_T(\bu^1_+-\bu^2_+).
\end{aligned}\end{equation}
By \eqref{4.4.2}, \eqref{4.4}, \eqref{4.4.3}, \eqref{4.6.2}, \eqref{4.6.3}, and \eqref{4.5}, 
we have
\begin{equation}\label{nd:2.2}\begin{aligned}
\|\CN_{22}&\|_{L_p((0, T), H^2_q(\Omega_+))}\\
& \leq C[\CA_2(\Psi_{\bu^2})]_{T,2}\{\|\Psi_{\bu^1}-\Psi_{\bu^2}\|_{L_\infty((0, T), H^2_q(\Omega_+))}
\|\nabla\bu^1_+\|_{L_p((0, T), H^2_q(\Omega_+))} \\
& + \|\Psi_{\bu^2}\|_{L_\infty((0, T), H^2_q(\Omega_+))}
\|\nabla(\bu^1-\bu^2)\|_{L_p((0, T), H^2_q(\Omega_+))}\}\\
& \leq CT^{1/p'}({E^1_T(\bu^1_+) + E^1_T(\bu^2_+)})E^1_T(\bu^1_+-\bu^2_+); \\
%%%%%%%%%%%%%%%%%%%
\|\pd_t\CN_{22}&\|_{L_p((0, T), L_q(\Omega_+))} \\
& \leq 
C\{\|\pd_t\CA_2(\psi_{\bu^2})\|_{L_\infty((0, T), H^1_q(\Omega))}
(\|\Psi_{\bu^1}-\Psi_{\bu^2}\|_{L_\infty((0, T), H^1_q(\Omega_+))}
T^{1/p}\|\nabla\bu^1_+\|_{L_\infty((0, T), L_q(\Omega_+))}\\
&\qquad + T^{1/p}\|\Psi_{\bu^2}\|_{L_\infty((0, T), H^1_q(\Omega_+))}
\|\nabla(\bu^1_+-\bu^2_+)\|_{L_\infty((0, T), L_q(\Omega_+))})\\
&+T^{1/p}[\CA_2(\Psi_{\bu^2})]_{T,2}
(\|\pd_t(\Psi_{\bu^1}-\Psi_{\bu^2})\|_{L_\infty((0, T), H^1_q(\Omega_+))}
\|\nabla\bu^1_+\|_{L_\infty((0, T), L_q(\Omega_+))}\\
&\qquad + \|\pd_t\Psi_{\bu^2}\|_{L_\infty((0, T), H^1_q(\Omega_+))}
\|\nabla(\bu^1_+-\bu^2_+)\|_{L_\infty((0, T), L_q(\Omega_+))})\\
&+[\CA_2(\Psi_{\bu^2})]_{T,2}( \|\Psi_{\bu^1}-\Psi_{\bu^2}\|_{L_\infty((0, T), H^1_q(\Omega_+))}
\|\pd_t \nabla\bu^1_+\|_{L_p((0, T), L_q(\Omega_+))})\\
&\qquad + \|\Psi_{\bu^2}\|_{L_\infty((0, T), H^1_q(\Omega_+))}
\|\pd_t\nabla(\bu^1_+-\bu^2_+)\|_{L_p((0, T), L_q(\Omega_+))})\}\\
& \leq C\{T^{1/p'}(E^1_T(\bu^1_+) + E^1_T(\bu^2_+))
+T^{1/p}(B+E^1_T(\bu^1_+) + E^1_T(\bu^2_+))\}
E^1_T(\bu^1_+-\bu^2_+).
\end{aligned}\end{equation}
Employing the same argument as in proving the second inequality in
\eqref{nd:2.2}, we also have
\begin{align*}
&{\|\CN_{32}\|_{L_p((0, T), H^1_q(\Omega_+))} 
\leq CT^{1/p'}(E^1_T(\bu^1_+) + E^1_T(\bu^2_+))E^1_T(\bu^1_+-\bu^2_+),} \\
&\|\pd_t\CN_{32}\|_{L_p((0, T), {H^1_q}(\Omega_+))} \\
&\quad\leq C\{T^{1/p'}(E^1_T(\bu^1_+) + E^1_T(\bu^2_+))
+T^{1/p}(B+E^1_T(\bu^1_+) + E^1_T(\bu^2_+))\}
E^1_T(\bu^1_+-\bu^2_+), 
\end{align*}
which, combined with \eqref{nd:2.1} and \eqref{nd:2.2}, yields that
\begin{equation}\label{nd:2}\begin{aligned}
&\|\CN_2\|_{L_p((0, T), H^2_q(\Omega_+))} + \|\pd_t\CN_2\|_{L_p((0, T), L_q(\Omega_+))}
\|\pd_t\CN_3\|_{{H^1_p}((0, T), {H^1_q}(\Omega_+))}\\
&\quad \leq C(T^{1/p'}E^1_T(\bu^1_+) + T^{1/p}(B+E^1_T(\bu^1_+)))
E^1_T(\bu^1_+-\bu^2_+).
\end{aligned}\end{equation}

We now esstimate $\bN_4$ and $\CN_4$.  Applying \eqref{4.3} to the formula given in
\eqref{2.15}, we have
\begin{align*}
&\|\bN_4(\bu_+, \bH_+)\|_{L_p((0, T), H^2_q(\Omega_+))} \\ 
&\quad\leq C[\CA_4(\Psi_\bu)]_{T,2}(\|\Psi_\bu\|_{L_\infty((0, T), H^2_q(\Omega_+))}
\|\nabla\bu_+\|_{L_p((0, T), H^2_q(\Omega_+))} \\
&\hskip4cm+ \|\bH_+\|_{L_\infty((0, T), H^1_q(\Omega_+))}
\|\bH_+\|_{L_p((0, T), H^2_q(\Omega_+))}); \\
&\|\pd_t\bN_4(\bu_+, \bH_+)\|_{L_p((0, T), L_q(\Omega_+))} \\
&\quad \leq C\{\|\pd_t\CA_4(\Psi_\bu)\|_{L_\infty((0, T), H^1_q(\Omega_+))}
(\|\Psi_\bu\|_{L_\infty((0, T), H^1_q(\Omega_+))}
\|\nabla\bu_+\|_{L_p((0, T), L_q(\Omega_+))} \\
&\hskip4cm+ \|\bH_+\|_{L_\infty((0, T), H^1_q(\Omega_+))}
\|\bH_+\|_{L_p((0, T), L_q(\Omega_+))})\\
&\quad +T^{1/p}[{\CA_4(\Psi_\bu)}]_{T,2}
\|\pd_t\Psi_\bu\|_{L_\infty((0, T), H^1_q(\Omega_+))}
\|\nabla\bu_+\|_{L_\infty((0, T), L_q(\Omega_+))})\\
&\quad + [{\CA_4(\Psi_\bu)}]_{T,2}(
\|\Psi_\bu\|_{L_\infty((0, T), H^1_q(\Omega_+))}\|\pd_t\nabla\bu_+\|_{L_p((0, T), L_q(\Omega_+))}\\
&\hskip4cm+ \|\bH_+\|_{L_\infty((0, T), H^1_q(\Omega_+))}
\|\pd_t\bH_+\|_{L_p((0, T), L_q(\Omega_+))})\}.
\end{align*}
Thus, by \eqref{4.4}, \eqref{4.4.3}, \eqref{4.5.1}, \eqref{4.4.2}, and \eqref{4.6.3}, 
we have
\begin{equation}\label{n:3}\begin{aligned}
\|\bN_4(\bu_+, \bH_+)\|_{L_p((0, T), H^2_q(\Omega_+))}
&\leq C(T^{1/p'}E^1_T(\bu_+)^2 
+ (B+T^{\frac{s}{p'(1+s)}}E^{2,+}_T(\bH_+))E^{2,+}_T(\bH_+)); \\
\|\pd_t\bN_4(\bu_+, \bH_+)\|_{L_p((0, T), L_q(\Omega_+))}
&\leq C(T^{1/p'}E^1_T(\bu_+)^2 + T^{1/p}(B+E^1_T(\bu_+))^2 \\
&\quad + {(1+B+E^1_T(\bu_+))}(B+T^{\frac{s}{p'(1+s)}}E^{2,+}_T(\bH_+))E^{2,+}_T(\bH_+)).
\end{aligned}\end{equation}
To estimate $\CN_4$, we write $\CN_4 = \CN_{41} + \CN_{42}$ with 
\begin{align*}
\CN_{41} & = (\CA_4(\Psi_{\bu^1}) - \CA_4(\Psi_{\bu^2}))(\Psi_{\bu^1}\otimes\nabla\bu^1_+, 
\bH^1_+\otimes\bH^1_+), \\
\CN_{42} &= \CA_4(\Psi_{\bu^2})((\Psi_{\bu^1}-\Psi_{\bu^2})\otimes\nabla \bu^1_+
+ \Psi_{\bu^2}\otimes\nabla(\bu^1_+-\bu^2_+), 
(\bH^1_+-\bH^2_+)\otimes\bH^1_+ + \bH^2_+\otimes(\bH^1_+-\bH^2_+)).
\end{align*}
By \eqref{4.3}, we have 
\allowdisplaybreaks
\begin{align*}
&\|\CN_{41}\|_{L_p((0, T), H^2_q(\Omega_+))} \\
&\quad \leq C\|\CA_4(\Psi_{\bu^1})-\CA_4(\Psi_{\bu^2})\|_{L_\infty((0, T), H^2_q(\Omega_+))}
( \|\Psi_{\bu^1}\|_{L_\infty((0, T), H^2_q(\Omega_+))}\|\nabla\bu^1_+\|_{L_p((0, T), H^2_q(\Omega_+))}\\
&\hskip2cm+\|\bH^1_+\|_{L_\infty((0, T), H^1_q(\Omega_+))}
\|\bH^1_+\|_{L_p((0, T), H^2_q(\Omega_+))}); \\
&\|\pd_t\CN_{41}\|_{L_p((0, T), L_q(\Omega_+))} \\
&\quad \leq C\{T^{1/p}\|\pd_t(\CA_4(\Psi_{\bu^1})-\CA_4(\Psi_{\bu^2}))\|_{L_\infty((0, T), H^1_q(\Omega_+))}
(\|\Psi_{\bu^1}\|_{L_\infty((0, T), H^1_q(\Omega_+))}\|\nabla\bu^1_+\|_{L_\infty((0, T), L_q(\Omega_+))} \\
&\hskip2cm+ \|\bH^1_+\|_{L_\infty((0, T), H^1_q(\Omega_+))}\|\bH^1_+\|_{L_\infty((0, T), L_q(\Omega_+))})\\
&\quad + \|\CA_4(\Psi_{\bu^1})-\CA_4(\Psi_{\bu^2})\|_{L_\infty((0, T), H^1_q(\Omega_+))}
({T^{1/p}} \|\pd_t\Psi_{\bu^1}\|_{L_\infty((0, T), H^1_q(\Omega_+))}\|\nabla\bu^1_+\|_{L_\infty((0, T), L_q(\Omega_+))} \\
&\quad+\|\Psi_{\bu^1}\|_{L_\infty((0, T), H^1_q(\Omega_+))}
\|\pd_t\nabla\bu^1_+\|_{L_p((0, T), L_q(\Omega_+))} 
+ \|\bH^1_+\|_{L_\infty((0, T), H^1_q(\Omega_+))}\|\pd_t\bH^1_+\|_{L_p((0, T), L_q(\Omega_+))})\}.\\
\end{align*}
Thus, by \eqref{4.4}, \eqref{4.4.2}, \eqref{4.4.3}, \eqref{4.5}, \eqref{4.6.1}, 
and \eqref{4.6.3}, we have 
\allowdisplaybreaks
\begin{align}\label{nd4.1}\begin{aligned}
\|\CN_{41}\|_{L_p((0, T), H^2_q(\Omega_+))} 
&\leq CT^{1/p'}\{E^1_T(\bu^1_+) + (B+E^{2,+}_T(\bH^1_+))E^{2,+}_T(\bH^1_+)\}E^1_T(\bu^1_+-\bu^2_+); \\
\|\pd_t\CN_{41}\|_{L_p((0, T), L_q(\Omega_+))}  &\leq C[T^{1/p}\{(B+E^1_T(\bu^1_+)) 
+ (B+E^{2,+}_T(\bH^1_+))^2\} \\
&+ T^{1/p'}\{E^1_T(\bu^1_+) + (B+E^{2,+}_T(\bH^1_+))E^{2,+}_T(\bH^1_+)\}]
E^1_T(\bu^1_+-\bu^2_+).
\end{aligned}\end{align}
On the other hand, by \eqref{4.3}, 
\allowdisplaybreaks
\begin{align*}
\|\CN_{42}\|_{L_p((0, T), H^2_q(\Omega_+))}
&\leq C[\CA_4(\Psi_{\bu^2})]_{T,2}(\|\Psi_{\bu^1}-\Psi_{\bu^2}\|_{L_\infty((0, T), H^2_q(\Omega_+))}
\|\nabla\bu^1_+\|_{L_p((0, T), H^2_q(\Omega_+))}\\
&\quad + \|\Psi_{\bu^2}\|_{L_\infty((0, T), H^2_q(\Omega_+))}
\|\nabla(\bu^1_+-\bu^2_+)\|_{L_p((0, T), H^2_q(\Omega_+))}\\
&\quad+\|\bH^1_+-\bH^2_+\|_{L_\infty((0, T), H^1_q(\Omega_+))}
\|(\bH^1_+, \bH^2_+)\|_{L_p((0, T), H^2_q(\Omega_+))}\\
&\quad +\|\bH^1_+-\bH^2_+\|_{L_p((0, T), H^2_q(\Omega_+))}
\|(\bH^1_+, \bH^2_+)\|_{L_\infty((0, T), H^1_q(\Omega_+))});\\
%%%%%%%%%%%%%%%%%%%%%
\|\pd_t\CN_{42}\|_{L_p((0, T), L_q(\Omega_+))} &
\leq C[\|\pd_t\CA_4(\Psi_{\bu^2}]\|_{L_\infty((0, T), H^1_q(\Omega_+))}\\
&\times(\|\Psi_{\bu^1}-\Psi_{\bu^2}\|_{L_\infty((0, T), H^1_q(\Omega_+))}
T^{1/p}\|\nabla\bu^1_+\|_{L_\infty((0, T), L_q(\Omega_+))}\\
&\quad + \|\Psi_{\bu^2}\|_{L_\infty((0, T), H^1_q(\Omega_+))}
T^{1/p}\|\nabla(\bu^1_+-\bu^2_+)\|_{L_\infty((0, T), L_q(\Omega_+))}\\
&\quad+T^{1/p}\|\bH^1_+-\bH^2_+\|_{L_\infty((0, T), H^1_q(\Omega_+))}
\|(\bH^1_+, \bH^2_+)\|_{L_\infty((0, T), L_q(\Omega_+))})\\
&+[A_4(\Psi_{\bu^2})]_{T,2}(T^{1/p}(\|\pd_t(\Psi_{\bu^1}-\Psi_{\bu^2})\|_{L_\infty((0, T), H^1_q(\Omega_+))}
\|\nabla\bu^1_+\|_{L_\infty((0, T), L_q(\Omega_+))}\\
&\quad + \|\pd_t\Psi_{\bu^2}\|_{L_\infty((0, T), H^1_q(\Omega_+))}
\|\nabla(\bu^1_+-\bu^2_+)\|_{L_\infty((0, T), L_q(\Omega_+))})\\
&\quad+\|\pd_t(\bH^1_+-\bH^2_+)\|_{L_p((0, T), L_q(\Omega_+))}
\|(\bH^1_+, \bH^2_+)\|_{L_\infty((0, T), H^1_q(\Omega_+))}\\
&\quad + \|\Psi_{\bu^1}-\Psi_{\bu^2}\|_{L_\infty((0, T), H^1_q(\Omega_+))}
\|\pd_t\nabla\bu^1_+\|_{L_p((0, T), L_q(\Omega_+))}\\
&\quad + \|\Psi_{\bu^2}\|_{L_\infty((0, T), H^1_q(\Omega_+))}
\|\pd_t\nabla(\bu^1_+-\bu^2_+)\|_{L_p((0, T), L_q(\Omega_+))}\\
&\quad+\|(\bH^1_+-\bH^2_+)\|_{L_\infty((0, T), H^1_q(\Omega_+))}
\|(\pd_t\bH^1_+,\pd_t \bH^2_+)\|_{L_p((0, T), L_q(\Omega_+))}].
\end{align*}
Thus, by \eqref{4.4}, \eqref{4.4.2}, \eqref{4.4.3}, \eqref{4.5}, \eqref{4.5.1}, \eqref{nd:1}, 
\begin{align*}
\|\CN_{42}\|_{L_p((0, T), H^2_q(\Omega_+))}
&\leq C\{T^{1/p'}(E^1_T(\bu_+^1) + E^1_T(\bu^2_+))E^1_T(\bu^1_+-\bu^2_+)\\
& + (B + T^{\frac{s}{p'(1+s)}}(E^{2,+}_T(\bH^1_+) + E^{2,+}_T(\bH^2_+)))E^{2,+}_T(\bH^1_+-\bH^2_+)\},\\
\|\pd_t\CN_{42}\|_{L_p((0, T), L_q(\Omega_+))} &\leq C\{T^{1/p}({B + }E^1_T(\bu^1_+)
+ E^1_T(\bu^2_+))E^1_T(\bu^1_+-\bu^2_+)\\
&+{T^{1/p}(B+E^1_T(\bu^2_+))}(B+E^{2,+}_T(\bH^1_+) + E^{2,+}_T(\bH^2_+))
E^{2,+}_T(\bH^1_+-\bH^2_+)\\
&+T^{1/p'}(E^1_T(\bu^1_+)+ E^1_T(\bu^2_+))E^1_T(\bu^1_+-\bu^2_+)\\
&+(B+T^{\frac{s}{p'(1+s)}}(E^{2,+}_T(\bH^1_+) + E^{2,+}_T(\bH^2_+)))E^{2,+}_T(\bH^1_+-\bH^2_+)\},
\end{align*}
which, combined with \eqref{nd:4.1}, yields that
\begin{equation}\label{nd:4}\begin{aligned}
&\|\CN_4\|_{L_p((0, T), H^2_q(\Omega_+))}
+ \|\pd_t\CN_4\|_{L_p((0, T), L_q(\Omega_+))} \\
&\quad \leq C[T^{1/p'}\{
%(B+E^1_T(\bu^1_+))E^1_T(\bu^1_+) +
  E^1_T(\bu^1_+) + E^1_T(\bu^2_+) + (B+E^{2,+}_T(\bH^1_+))E^{2,+}_T(\bH^1_+)\}\\
&\hskip3.15cm + T^{1/p}({B+}E^1_T(\bu^1_+) + E^1_T(\bu^2_+) + (B+E^{2,+}_T(\bH^1_+))^2)]
E^1_+(\bu^1_+-\bu^2_+)\\
& \quad+[T^{1/p}(B+E^1_T(\bu^2_+))(B+E^{2,+}_T(\bH^1_+) + E^{2,+}_T(\bH^2_+))\\
&\hskip3.15cm +B+T^{\frac{s}{p'(1+s)}}(E^{2,+}_T(\bH^1_+) + E^{2,+}_T(\bH^2_+))]E^{2,+}_T(\bH^1_+-\bH^2_+)\}
\end{aligned}\end{equation}
where we have used $0 < T < 1$.

We now estimate $\bN_5$ and $\CN_5$.  Applying  \eqref{4.3} to the formula given in \eqref{2.13}, we have
\begin{equation}\label{n:5}\begin{aligned}
&\|\bN_{5\pm}(\bu, \bH)\|_{L_p((0, T), L_q(\Omega_\pm))}
 \leq C[\CA_{5\pm}(\Psi_\bu)]_{T,2}(T^{1/p}\|\bH_\pm\|^2_{L_\infty((0, T), H^1_q(\Omega_\pm))} \\
&\quad  +\|\Psi_\bu\|_{L_\infty((0, T), H^1_q(\Omega_\pm))}\|\bH_\pm\|_{L_p((0, T), H^2_q(\Omega_\pm))}
+ T^{1/p}\|\bu_+\|_{L_\infty((0, T), H^1_q(\Omega_+))}
\|\bH_\pm\|_{L_\infty((0, T), H^1_q(\Omega_+))})\\
& \leq C\{T^{1/p}(B+E^2_T(\bH))(B+E^1_T(\bu_+)+E^2_T(\bH))
+ T^{1/p'}E^1_T(\bu_+)E^2_T(\bH)\}.
\end{aligned}\end{equation}
To estimte $\CN_5$, we write $\CN_{5\pm} = \CN_{51\pm} + \CN_{52\pm}$ with
\begin{align*}
\CN_{51\pm} & = (\CA_{5\pm}(\Psi_{\bu^1})-\CA_{5\pm}(\Psi_{\bu^2}))(K^5_{11}, K^5_{21}, K^5_{31}, K^5_{41}, K^5_{51}), \\
\CN_{52\pm} & = \CA_{5\pm}(\Psi_{\bu^2})(K^5_{11}-K^5_{12}, K^5_{21}-K^5_{22}, K^5_{31}-K^5_{32}, 
K^5_{41}-K^5_{42}, K^5_{51}-K^5_{52}), 
\end{align*}
where $K^5_{1i} = \bH^i_\pm\otimes\nabla \bH^i_\pm$, 
$K^5_{2i} = \Psi_{\bu^i}\otimes\nabla^2\bH_{\pm}^i$, 
$K^5_{3i}=\nabla\Psi_{\bu^i}\otimes\nabla \bH^i_\pm$, 
$K^5_{4i} = \delta_\pm\nabla\bu^i_\pm\otimes\bH^i_\pm$, and 
$K^5_{5i}=\bu^i_\pm\otimes\bH^i_\pm$. 
By \eqref{4.6.1} and \eqref{n:5}, we have
\begin{equation}\label{nd:5.1} \|\CN_{51\pm}\|_{L_p((0, T), L_q(\Omega_\pm))}
\leq CT^{1/p'}(B+E^2_T(\bH^1))(B+E^1_T(\bu^1_+)+E^2_T(\bH^1))E^1_T(\bu^1_+-\bu^2_+),
\end{equation}
where we have used $0 < T < 1$. 
Furthermore, by \eqref{4.3}, \eqref{4.4}, and \eqref{4.5}
\begin{align*}
\|K^5_{11}-K^5_{12}\|_{L_p((0, T), L_q(\Omega_\pm))}
& \leq CT^{1/p}(B+E^2_T(\bH^1) + E^2_T(\bH^2))E^2_T(\bH^1-\bH^2); \\
\|K^5_{21}-K^5_{22}\|_{L_p((0, T), L_q(\Omega_\pm))} &\leq CT^{1/p'}\{E^2_T(\bH^1)E^1_T(\bu^1_+-\bu^2_+)
+E^1_T(\bu^2_+)E^2_T(\bH^1-\bH^2)\}; \\
\|K^5_{31}-K^5_{32}\|_{L_p((0, T), L_q(\Omega_\pm))} &\leq CT^{1/p'}\{E^2_T(\bH^1)E^1_T(\bu^1_+-\bu^2_+)
+E^1_T(\bu^2_+)E^2_T(\bH^1-\bH^2)\}; \\
\|K^5_{41}-K^5_{42}\|_{L_p((0, T), L_q(\Omega_\pm))}
& \leq CT^{1/p}\{(B+E^2_T(\bH^1))E^1_T(\bu^1_+-\bu^2_+) + 
(B+E^1_T(\bu^2_+))E^2_T(\bH^1-\bH^2)\}; \\
\|K^5_{51}-K^5_{52}\|_{L_p((0, T), L_q(\Omega_\pm))}
& \leq CT^{1/p}\{(B+E^2_T(\bH^1))E^1_T(\bu^1_+-\bu^2_+) +
(B+ E^1_T(\bu^2_+))E^2_T(\bH^1-\bH^2)\},
\end{align*}
which, combined with \eqref{4.4.3}, yields that 
\begin{align}
\|\CN_{52\pm}\|_{L_p((0, T), L_q(\Omega_\pm))}
&\leq C\{T^{1/p}(B+E^2_T(\bH^1) + E^2_T(\bH^2) + E^1_T(\bu^2_+))
+ T^{1/p'}E^1_T(\bu^2_+)\}E^2_T(\bH^1-\bH^2) \nonumber \\
&\quad+ C\{T^{1/p'}E^2_T(\bH^1) + 
T^{1/p}(B+E^2_T(\bH^1))\}E^1_T(\bu^1_+-\bu^2_+). 
\label{nd:5.2}
\end{align}
Combining \eqref{nd:5.1} and \eqref{nd:5.2} gives that 
\begin{equation}\label{nd:5}\begin{aligned}
\|\CN_5&\|_{L_p((0, T), L_q(\Omega))}  \leq C\{ T^{1/p'}((B+E^2_T(\bH^1))(B+E^1_T(\bu^1_+) + E^2_T(\bH^1))
+ E^2_T(\bH^1))\\
&\phantom{\|_{L_p((0, T), L_q(\Omega))}  \leq}\,\,
+ T^{1/p}(B+E^2_T(\bH^1))\}E^1_T(\bu^1_+-\bu^2_+)\\
&\quad\, +C\{T^{1/p'}E^1_T(\bu^2_+)+ T^{1/p}(B+E^2_T(\bH^1) + E^2_T(\bH^2) + E^1_T(\bu^2_+))\}
E^2_T(\bH^1-\bH^2). 
\end{aligned}\end{equation}

We now consider $\bN_6$ and $\CN_6$. We have to extend them to $t < 0$. 
{For this purpose, Let $E_\mp$ be an extension operator satisfying \eqref{jump:1}}. 
In view of \eqref{2.16}, we have
\begin{equation}\label{2.16*}\begin{aligned}
\bN_6(\bu, \bH) &= \CA_{61}(\Psi_\bu)(\alpha_+^{-1}\nabla E_-[\bH_+]-\alpha^{-1}_-\nabla E_+[\bH_-])
+ (\CB+\CA_{62}(\Psi_\bu))\nabla E_-[\bu_+]\otimes \nabla E_-[\bH_+]; \\
\CN_6 & = (\CA_{61}(\Psi_{\bu^1})-\CA_{61}(\Psi_{\bu^2}))
(\alpha_+^{-1}E_-[\bH_+^1]-\alpha^{-1}_-E_+[\bH_-^1])\\
&+( \CA_{62}(\Psi_{\bu^1})- \CA_{62}(\Psi_{\bu^2}))E_-[\bu_+^1]\otimes E_-[\bH_+^1]\\
& + \CA_{61}(\Psi_{\bu^2})(\alpha_+^{-1}\nabla (E_-[\bH_+^1]-E_-[\bH^2_+])-\alpha^{-1}_-
\nabla (E_+[\bH_-^1]-E_+[\bH_-^2]))\\
&+(\CB+\CA_{62}(\Psi_{\bu^2}))((E_-[\bu_+^1]-E_-[\bu_+^2])\otimes E_-[\bH_+^1]
+ E_-[\bu_+^2]\otimes(E_-[\bH_+^1]-E_-[\bH_+^2])).
\end{aligned}\end{equation}
on $\Gamma\times(0, T)$. 
{Define the extension operator $e_T$ by \eqref{4.8} 
and let $E_\pm$ and $E_{\BR^N}$ be extension operators 
satisfying \eqref{jump:1} and \eqref{4.11}, respectively. 
Let $\gamma_1$ and $\gamma_2$ be the positive constants appearing in 
respective Theorem \ref{thm:3.1.1} and Theorem \ref{thm:3.2.1} 
and set $\gamma_0=\max(\gamma_1,\gamma_2)$ below. 
Instead of \eqref{semigroup}, we set 
\begin{align*}
T(t)\bv_0=e^{(-\gamma_0-1+\Delta)t}\bv_0. 
\end{align*}
Then let $\CT$ and $\CE$ be extension operators 
satisfying \eqref{4.13*} and \eqref{4.14}, respectively. For $\ell=0,1$, we obtain 
\begin{equation*}\begin{aligned}
\|e^{\gamma_0t}T(\cdot)\bv_0\|_{L_p((0, \infty), H^{\ell+2}_q(\BR^N))}
+ \|e^{\gamma_0t}\pd_tT(\cdot)\bv_0\|_{L_p((0, \infty), H^\ell_q(\BR^N))}
\leq C\|\bv_0\|_{B^{\ell+2(1-1/p)}_{q,p}(\BR^N)} 
\end{aligned}\end{equation*}
and 
\begin{align}
\CT[E_\mp[\bw_{0\pm}]](0) & = E_\mp[\bw_{0\pm}] \quad\text{in $\Omega$},  
\nonumber\\
\|e^{\gamma_0t}\CT[E_\mp[\bw_{0\pm}]]\|_{L_p(\BR, H^{2+\ell}_q(\Omega))}
&+ \|e^{\gamma_0t}\pd_t\CT[E_\mp[\bw_{0\pm}]]\|_{L_p(\BR, H^\ell_q(\Omega))}
\leq C\|\bw_0\|_{B^{2(1-1/p)+\ell}_{q,p}(\dot\Omega)}.\label{4.13}
\end{align}}
{We now define an extension $\tilde \bN_6(\bu, \bH)$. Let $\tilde \bN_6(\bu, \bH)
= \tilde \bN_{61}(\bu, \bH) + \tilde \bN_{62}(\bu, \bH)$ with 
\begin{align*}
\tilde\bN_{61}(\bu, \bH) &= \CA_{61}(e_T[\Psi_\bu])(\alpha_+^{-1}\nabla\CE[E_-[\bH_+]]-\alpha^{-1}_-
\nabla\CE[E_+[\bH_-]]);\\
\tilde\bN_{62}(\bu, \bH) & = (\CB+\CA_{62}(e_T[\Psi_\bu]))\CE[E_-[\bu_+]]\otimes \CE[E_-[\bH_+]].
\end{align*}

%%%%%%%%%%
%%%%%%%%%

To estimate $H^{1/2}_p(\BR, L_q(\dot\Omega))$ norm,   we use
the following {lemma}.
\begin{lem}\label{lem:4.5.3} Let $1 < p < \infty$ and $N < q < \infty$.
Let 
\begin{align*}
f  \in L_\infty(\BR, H^1_q(\dot\Omega)) \cap H^1_\infty(\BR, L_q(\dot\Omega)), \quad 
g \in H^{1/2}_p(\BR, H^1_q(\dot\Omega)) \cap L_p(\BR, H^1_q(\dot\Omega)).
\end{align*}
Then, we have
\begin{align*}
&\|fg\|_{H^{1/2}_p(\BR, L_q(\dot\Omega))} + \|fg\|_{L_p(\BR, H^1_q(\dot\Omega))}\\
&
\leq C(\|\pd_tf\|_{L_\infty(\BR, L_q(\dot\Omega))}+ \|f\|_{L_\infty(\BR, H^1_q(\dot\Omega))})^{1/2}
\|f\|_{L_\infty(\BR, H^1_q(\dot\Omega))}^{1/2}
(\|g\|_{H^{1/2}_p(\BR, L_q(\dot\Omega))}
+ \|g\|_{L_p(\BR, H^1_q(\dot\Omega))}).
\end{align*}
\end{lem}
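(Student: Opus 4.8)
The plan is to reduce the estimate to two standard ingredients. The first is the pointwise‑in‑time multiplication inequality $\|\phi\psi\|_{H^1_q(\dot\Omega)}\le C\|\phi\|_{H^1_q(\dot\Omega)}\|\psi\|_{H^1_q(\dot\Omega)}$ together with $\|\phi\|_{L_\infty(\dot\Omega)}\le C\|\phi\|_{H^1_q(\dot\Omega)}$, both contained in \eqref{4.3} and both valid exactly because $q>N$. The second is the standard first‑difference characterization of the Bessel potential space: since $X=L_q(\dot\Omega)$ is a UMD space,
\[
\|u\|_{H^{1/2}_p(\BR,X)}\sim \|u\|_{L_p(\BR,X)}+\Bigl\|\,s\mapsto\Bigl(\int_0^\infty t^{-2}\,\|u(s+t)-u(s)\|_X^2\,dt\Bigr)^{1/2}\,\Bigr\|_{L_p(\BR)} .
\]
Throughout I read the hypothesis on $g$ as $g\in H^{1/2}_p(\BR,L_q(\dot\Omega))\cap L_p(\BR,H^1_q(\dot\Omega))$, which is what makes the stated right‑hand side meaningful, and I abbreviate $M_0:=\|\pd_t f\|_{L_\infty(\BR,L_q(\dot\Omega))}$ and $M_1:=\|f\|_{L_\infty(\BR,H^1_q(\dot\Omega))}$.

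First I would dispose of the two easy pieces. By the algebra property and $\|f\|_{L_\infty(\BR,L_\infty(\dot\Omega))}\le CM_1$ one gets $\|fg\|_{L_p(\BR,H^1_q(\dot\Omega))}\le CM_1\|g\|_{L_p(\BR,H^1_q(\dot\Omega))}$ and $\|fg\|_{L_p(\BR,L_q(\dot\Omega))}\le CM_1\|g\|_{L_p(\BR,L_q(\dot\Omega))}$; since $M_1\le M_1^{1/2}(M_0+M_1)^{1/2}$ these are dominated by the right‑hand side of the Lemma. It then remains to bound the difference seminorm of $fg$, and for that I use the Leibniz splitting
\[
(fg)(s+t)-(fg)(s)=[f(s+t)-f(s)]\,g(s)+f(s+t)\,[g(s+t)-g(s)]=:\mathrm I(s,t)+\mathrm{II}(s,t).
\]
Term $\mathrm{II}$ is immediate: $\|\mathrm{II}(s,t)\|_{L_q}\le\|f(s+t)\|_{L_\infty}\,\|g(s+t)-g(s)\|_{L_q}\le CM_1\|g(s+t)-g(s)\|_{L_q}$, so pulling $CM_1$ out of the $t$–integral uniformly in $s$ and taking $L_p$ in $s$ shows that $\mathrm{II}$ contributes $\le CM_1\,[g]_{H^{1/2}_p(\BR,L_q(\dot\Omega))}$.

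Term $\mathrm I$ is where the structure of the constant is produced. By Hölder, $\|g(s)\|_{L_\infty(\dot\Omega)}\le C\|g(s)\|_{H^1_q(\dot\Omega)}$, and the two elementary bounds $\|f(s+t)-f(s)\|_{L_q}\le\int_s^{s+t}\|\pd_\sigma f(\sigma)\|_{L_q}\,d\sigma\le tM_0$ and $\|f(s+t)-f(s)\|_{L_q}\le\|f(s+t)\|_{L_q}+\|f(s)\|_{L_q}\le 2M_1$, I obtain
\[
\|\mathrm I(s,t)\|_{L_q}\le\|f(s+t)-f(s)\|_{L_q}\,\|g(s)\|_{L_\infty}\le C\,\min\{tM_0,\,2M_1\}\,\|g(s)\|_{H^1_q}.
\]
Inserting this into the $t$–integral with $g(s)$ frozen and splitting $\int_0^\infty=\int_0^{t_\ast}+\int_{t_\ast}^\infty$ at the balance point $t_\ast:=2M_1/M_0$, a direct computation gives $\int_0^\infty t^{-2}\min\{tM_0,2M_1\}^2\,dt=4M_0M_1$ (each half contributing $2M_0M_1$), whence $\bigl(\int_0^\infty t^{-2}\|\mathrm I(s,t)\|_{L_q}^2\,dt\bigr)^{1/2}\le C(M_0M_1)^{1/2}\|g(s)\|_{H^1_q}$; taking $L_p$ in $s$ gives the contribution $C(M_0M_1)^{1/2}\|g\|_{L_p(\BR,H^1_q(\dot\Omega))}$. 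Collecting the four pieces and using $(M_0M_1)^{1/2}\le M_1^{1/2}(M_0+M_1)^{1/2}$ yields exactly the asserted inequality.

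The only genuinely delicate point is the temporal integral in $\mathrm I$. A naive treatment — splitting at $t=1$, or interpolating the two bounds on $\|f(s+t)-f(s)\|_{L_q}$ with a fixed exponent — only produces the weaker constant $C(M_0+M_1)$; it is the splitting precisely at the balance point $t_\ast=2M_1/M_0$ that makes the two halves equal and thereby yields the symmetric geometric mean. The hypothesis $q>N$ is used in exactly one place, namely the embedding $H^1_q(\dot\Omega)\hookrightarrow L_\infty(\dot\Omega)$ needed to pass from $\|g(s)\|_{L_\infty}$ to $\|g(s)\|_{H^1_q}$ (and, together with the algebra property, for the two easy pieces); no Gagliardo–Nirenberg interpolation is required.
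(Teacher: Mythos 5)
Your plan has a genuine gap at its foundation. The quantity
\[
\|u\|_{L_p(\BR,X)}+\Bigl\|\,s\mapsto\Bigl(\int_0^\infty t^{-2}\,\|u(s+t)-u(s)\|_X^2\,dt\Bigr)^{1/2}\,\Bigr\|_{L_p(\BR)}
\]
is the difference norm of the vector-valued \emph{Triebel--Lizorkin} space $F^{1/2}_{p,2}(\BR;X)$, not of the Bessel potential space $H^{1/2}_p(\BR;X)$. For $X$-valued functions one has $H^{1/2}_p(\BR;X)=F^{1/2}_{p,2}(\BR;X)$ if and only if $X$ is isomorphic to a Hilbert space; the UMD property alone is not enough. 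With $X=L_q(\dot\Omega)$ and $q>N\geq 2$ these two scales are genuinely different: the $\ell^2$-square-function form of Littlewood--Paley fails for $L_q$-valued functions when $q\neq 2$, and the available Littlewood--Paley characterization of $H^s_p(\BR;X)$ for UMD $X$ is the \emph{randomized} (Rademacher-averaged) one, not the one with a deterministic $L_2(dt/t)$ inner norm. So the equivalence you invoke in the second display is false in the setting of the Lemma, and the rest of the argument --- Leibniz splitting, the two bounds on $\|f(s+t)-f(s)\|_{L_q}$, and the optimization of $\int_0^\infty t^{-2}\min\{tM_0,2M_1\}^2\,dt=4M_0M_1$ at $t_\ast=2M_1/M_0$ --- while arithmetically correct, proves a bound on the $F^{1/2}_{p,2}$ norm of $fg$, not on the stated $H^{1/2}_p$ norm.

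The paper proves the Lemma by complex interpolation instead, which sidesteps the issue: one verifies the two endpoint bounds
\[
\|fg\|_{H^1_p(\BR,L_q)\cap L_p(\BR,H^1_q)}\leq C(\|\pd_tf\|_{L_\infty L_q}+\|f\|_{L_\infty H^1_q})\,\|g\|_{H^1_p(\BR,L_q)\cap L_p(\BR,H^1_q)},\qquad
\|fg\|_{L_p(\BR,L_q)}\leq C\|f\|_{L_\infty H^1_q}\|g\|_{L_p(\BR,L_q)},
\]
and uses the mixed-derivative identity
$\bigl(L_p(\BR,L_q),\,H^1_p(\BR,L_q)\cap L_p(\BR,H^1_q)\bigr)_{[1/2]}
= H^{1/2}_p(\BR,L_q)\cap L_p(\BR,H^{1/2}_q)$,
which holds for UMD target spaces. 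Under that route the $1/2$--$1/2$ geometric-mean constant falls out of the interpolation of operator norms, with no balance-point optimization needed; the extra term $\|fg\|_{L_p(\BR,H^1_q)}$ is handled by the direct algebra estimate, exactly as in your ``easy pieces''. If you want to keep a difference-quotient argument in the spirit of your balance-point computation, you would be proving a Besov/Slobodetskii analogue ($W^{1/2}_p$ or $B^{1/2}_{p,2}$), which is not what Theorem \ref{thm:3.2.1} requires on the boundary data.
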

\begin{proof} To prove Lemma \ref{lem:4.5.3}, we use the fact that
$$H^{1/2}_p(\BR, L_q(\dot\Omega)) \cap L_p(\BR, H^{1/2}_q(\dot\Omega))
=(L_p(\BR, L_q(\dot\Omega)), 
H^1_p(\BR, L_q(\dot\Omega)) \cap L_p(\BR, H^1_q(\dot\Omega)))_{[1/2]}
$$
where $(\cdot, \cdot)_{[1/2]}$ denotes a  complex interpolation functor of order
$1/2$. We have
\begin{align*}
&\|fg\|_{H^1_p(\BR, L_q(\dot\Omega))} + 
\|fg\|_{L_p(\BR, H^1_q(\dot\Omega))}\\
&\quad \leq C(\|\pd_tf\|_{L_\infty(\BR, L_q(\dot\Omega))}
\|g\|_{L_p(\BR, H^1_q(\dot\Omega))}
+ \|f\|_{L_\infty(\BR, H^1_q(\dot\Omega))}
\|g\|_{H^1_p(\BR, L_q(\dot\Omega))})\\
&\quad \leq C(\|\pd_tf\|_{L_\infty(\BR, L_q(\dot\Omega))}
+ \|f\|_{L_\infty(\BR, H^1_q(\dot\Omega))})
(\|g\|_{L_p(\BR, H^1_q(\dot\Omega))}
+\|g\|_{H^1_p(\BR, L_q(\dot\Omega))}).
\end{align*}
Moreover, 
$$\|fg\|_{L_p(\BR, L_q(\dot\Omega))}
\leq C\|f\|_{L_p(\BR, H^1_q(\dot\Omega))}\|g\|_{L_p(\BR, L_q(\dot\Omega))}.
$$
Thus, by complex interpolation, we have
\begin{align*}
&\|fg\|_{H^{1/2}_p(\BR, L_q(\dot\Omega))} + \|fg\|_{L_p(\BR, H^{1/2}_q(\dot\Omega))}\\
&
\leq C(\|\pd_tf\|_{L_\infty(\BR, L_q(\dot\Omega))}+ \|f\|_{L_\infty(\BR, H^1_q(\dot\Omega))})^{1/2}
\|f\|_{L_\infty(\BR, H^1_q(\dot\Omega))}^{1/2}
(\|g\|_{H^{1/2}_p(\BR, L_q(\dot\Omega))}
+ \|g\|_{L_p(\BR, H^{1/2}_q(\dot\Omega))}).
\end{align*}
Moreover, we have
$$\|fg\|_{L_p(\BR, H^1_q(\dot\Omega))} \leq C\|f\|_{L_\infty(\BR, H^1_q(\dot\Omega))}
\|g\|_{L_p(\BR, H^1_q(\dot\Omega))}.
$$
Thus, combining these two inequalities give the required estimate, which completes
 the proof of Lemma \ref{lem:4.5.3}. 
\end{proof}

\begin{lem}\label{lem:4.5.4}
Let $1 < p, q < \infty$.  Then, 
$$H^1_p(\BR, L_q(\dot\Omega)) \cap L_p(\BR, H^2_q(\dot\Omega))
\subset H^{1/2}_p(\BR, H^1_q(\dot\Omega))$$
and 
$$\|u\|_{H^{1/2}_p(\BR, H^1_q(\dot\Omega))} \leq C(\|u\|_{L_p(\BR, H^2_q(\dot\Omega))}
+ \|\pd_tu\|_{L_p(\BR, L_q(\dot\Omega))}).
$$
\end{lem}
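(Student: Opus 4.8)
The plan is to deduce the embedding from a complex interpolation identity, after first transferring the problem to the whole space $\BR^N$. Since $\Omega_\pm$ are $C^2$ domains, there is an extension operator $\mathfrak{E}_\pm$ mapping $H^k_q(\Omega_\pm)$ boundedly into $H^k_q(\BR^N)$ \emph{simultaneously} for $k=0,1,2$ with $\mathfrak{E}_\pm u=u$ on $\Omega_\pm$ (a Stein-type extension; one may also compose the operators appearing in \eqref{jump:1} and \eqref{4.11}). Because $\mathfrak{E}_\pm$ acts only in the space variable, it commutes with $\partial_t$ and with the temporal Bessel potential $(1-\partial_t^2)^{s/2}$; hence, extending $u_+$ and $u_-$ separately, the map $u\mapsto\mathfrak{E}u:=(\mathfrak{E}_+u_+,\mathfrak{E}_-u_-)$ sends $H^1_p(\BR,L_q(\dot\Omega))\cap L_p(\BR,H^2_q(\dot\Omega))$ boundedly into $H^1_p(\BR,L_q(\BR^N))\cap L_p(\BR,H^2_q(\BR^N))$, the restriction $R$ sends $H^{1/2}_p(\BR,H^1_q(\BR^N))$ boundedly onto $H^{1/2}_p(\BR,H^1_q(\dot\Omega))$, and $R\mathfrak{E}=\mathrm{id}$. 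Thus it suffices to prove the norm estimate with $\dot\Omega$ replaced by $\BR^N$.

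For the statement on $\BR^N$, I would invoke the complex interpolation identity for vector-valued Bessel potential spaces, in the form
\begin{equation*}
[H^1_p(\BR,L_q(\BR^N)),\,L_p(\BR,H^2_q(\BR^N))]_{[1/2]}=H^{1/2}_p\bigl(\BR,\,[L_q(\BR^N),H^2_q(\BR^N)]_{[1/2]}\bigr)=H^{1/2}_p(\BR,H^1_q(\BR^N)),
\end{equation*}
where in the last equality one uses $[L_q(\BR^N),H^2_q(\BR^N)]_{[1/2]}=H^1_q(\BR^N)$ (the standard Bessel potential scale), and where the first equality is the case $s_0=1$, $s_1=0$, $\theta=1/2$ of $[H^{s_0}_p(\BR,X_0),H^{s_1}_p(\BR,X_1)]_{[\theta]}=H^{(1-\theta)s_0+\theta s_1}_p(\BR,[X_0,X_1]_{[\theta]})$, valid since $L_q(\BR^N)$ and $H^2_q(\BR^N)$ are UMD spaces for $1<q<\infty$. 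Then the elementary fact that the intersection of a Banach couple embeds into any complex interpolation space, with $\|a\|_{[Y_0,Y_1]_{[1/2]}}\le\|a\|_{Y_0}^{1/2}\|a\|_{Y_1}^{1/2}\le\|a\|_{Y_0}+\|a\|_{Y_1}$, applied with $Y_0=H^1_p(\BR,L_q(\BR^N))$ and $Y_1=L_p(\BR,H^2_q(\BR^N))$, gives the inclusion and the estimate on $\BR^N$. Transferring back through $R$ and $\mathfrak{E}$ and using $\|u\|_{H^1_p(\BR,L_q(\dot\Omega))}\le C(\|u\|_{L_p(\BR,H^2_q(\dot\Omega))}+\|\partial_tu\|_{L_p(\BR,L_q(\dot\Omega))})$ (the $L_q$-based Sobolev norm being equivalent to the Bessel norm, again by the UMD property) yields the assertion of the lemma.

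A more self-contained alternative avoids the abstract interpolation theorem and argues by Fourier multipliers on $\BR^N$: writing $(1-\partial_t^2)^{1/4}(1-\Delta_x)^{1/2}=m(D_t,D_x)\,\bigl((1-\partial_t^2)^{1/2}+(1-\Delta_x)\bigr)$ with
\begin{equation*}
m(\tau,\xi)=\frac{(1+\tau^2)^{1/4}(1+|\xi|^2)^{1/2}}{(1+\tau^2)^{1/2}+(1+|\xi|^2)},
\end{equation*}
one has $|m|\le1/2$ by the arithmetic--geometric mean inequality, and a direct computation shows $m$ satisfies a mixed Mikhlin condition on $\BR_\tau\times\BR^N_\xi$; since $L_q(\BR^N)$ is UMD, the families $\{m(\tau,\cdot)\}_\tau$ and $\{\tau\partial_\tau m(\tau,\cdot)\}_\tau$ are $\CR$-bounded on $L_q(\BR^N)$, so the operator-valued Mikhlin (Weis) theorem in the $t$-variable makes $m(D_t,D_x)$ bounded on $L_p(\BR,L_q(\BR^N))$; combined with $\|\bigl((1-\partial_t^2)^{1/2}+(1-\Delta_x)\bigr)u\|_{L_p(\BR,L_q)}\le\|u\|_{H^1_p(\BR,L_q)}+\|u\|_{L_p(\BR,H^2_q)}$ this gives the $\BR^N$ estimate directly.

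The only nontrivial point in either route is the verification of the vector-valued interpolation identity, equivalently of the mixed Mikhlin estimate for $m$ — classical but genuinely dependent on the UMD property of the $L_q$-based spaces; the construction of the simultaneous extension operator for $k=0,1,2$ and the ``intersection embeds in the interpolation space'' bound are routine. I expect that Mikhlin/interpolation step to be the main obstacle to present cleanly, with everything else being bookkeeping.
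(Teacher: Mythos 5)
The paper does not give a proof of this lemma at all; it simply cites Shibata \cite[Proposition 1]{S2}. Your proposal supplies an actual argument, and both routes you describe are essentially correct, with the Fourier--multiplier route being the cleaner and more self-contained one. What you are reproving is the \emph{mixed derivative theorem} (Sobolevskii; Pr\"uss--Sohr) applied to the commuting operators $A=(1-\partial_t^2)^{1/2}$ on $L_p(\BR,L_q)$ with domain $H^1_p(\BR,L_q)$ and $B=1-\Delta_x$ with domain $L_p(\BR,H^2_q)$: the conclusion is exactly $D(A)\cap D(B)\subset D(A^{1/2}B^{1/2})=H^{1/2}_p(\BR,H^1_q)$, and the multiplier $m(\tau,\xi)$ together with Weis's operator-valued Mikhlin theorem (applicable since $L_q(\BR^N)$ is UMD with Pisier's property $(\alpha)$) is the standard concrete realization of that theorem. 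This is almost certainly what the cited Proposition~1 does.

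One point to tighten in the interpolation route: the identity $[H^{s_0}_p(\BR,X_0),H^{s_1}_p(\BR,X_1)]_{[\theta]}=H^{s_\theta}_p(\BR,[X_0,X_1]_{[\theta]})$ is completely standard only when $X_0=X_1$. For $X_0\neq X_1$ (here $L_q$ versus $H^2_q$) this is a genuinely anisotropic interpolation result; it is true, but its proofs in the literature (Amann's anisotropic function space theory, Meyries--Veraar, or Pr\"uss--Simonett \cite{Pr-Sim}) already pass through the same commuting-operator/bounded-imaginary-powers machinery, so this route does not actually shortcut the multiplier computation --- it just relocates it. Since for the lemma you only need the one inclusion $[Y_0,Y_1]_{[1/2]}\subset H^{1/2}_p(\BR,H^1_q)$ (the other half, $Y_0\cap Y_1\hookrightarrow[Y_0,Y_1]_{[1/2]}$, is trivial), and that inclusion is exactly what your multiplier bound proves directly, I would present the multiplier argument as the proof and mention interpolation only as a reformulation. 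Finally, note that the extension operators $\mathfrak{E}_\pm$ must be uniformly bounded on possibly unbounded $\Omega_\pm$; this is fine because the paper assumes \emph{uniform} $C^2$/$C^3$ domains, under which a simultaneous Stein-type extension for $k=0,1,2$ with uniform bounds exists, and since $\mathfrak{E}_\pm$ acts only in $x$ it indeed commutes with $(1-\partial_t^2)^{s/2}$, as you use.
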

\begin{proof} For a proof, see Shibata \cite[Proposition 1]{S2}. 
\end{proof}

By \eqref{2.1**}, we may assume that
\begin{equation}\label{2.1a}
\|e_T[\Psi_\bw]\|_{L_\infty(\BR, L_\infty(\Omega))} \leq \delta \quad\text{for $\bw \in \{\bu, \bu^1, \bu^2\}$}.
\end{equation}
By \eqref{4.4}, \eqref{mat:1} and the same argument as in proving \eqref{4.6.1}, we have
\begin{equation}\label{e4.4}\begin{aligned}
\|\CA_{6i}(e_T[\Psi_\bw])\|_{L_\infty(\BR, H^2_q(\Omega)} 
&\leq CT^{1/p'}{E^1_T(\bw_+)} \hspace{10mm} \text{for $\bw \in \{\bu, \bu^1, \bu^2\}$};
\\
\|\CA_{6i}(e_T[\Psi_{\bu^1}])-\CA_{6i}(e_T[\Psi_{\bu^2}])\|_{L_\infty(\BR, H^2_q(\Omega))}
&\leq CT^{1/p'}E^1_T(\bu^1_+-\bu^2_+). 
\end{aligned}\end{equation}
Employing the same as in proving \eqref{4.6.3} yields 
\begin{equation}\label{e4.15}\begin{aligned}
\|\pd_t(\CA_{6i}(e_T[\Psi_\bu])\|_{L_\infty(\BR, H^1_q(\Omega))}
& \leq C(B+E^1_T(\bu_+)), \\
\|\pd_t(\CA_{6i}(e_T[\Psi_{\bu^1}])-\CA_{6i}(e_T[\Psi_{\bu^2}))\|_{L_\infty(\BR, H^1_q(\Omega))}
 & \leq CE^1_T(\bu^1_+-\bu^2_+), 
\end{aligned}\end{equation}
and so noting that $e_T[\Psi_\bu]$  vanishes for $t \not \in (0, 2T)$,
by Lemma \ref{lem:4.5.3}, \eqref{e4.4} and \eqref{e4.15}, we have
\begin{align*}
&\|e^{-\gamma t}\tilde\bN^1_6(\bu, \bH)\|_{H^{1/2}_p(\BR, L_q(\Omega))}
+ \|e^{-\gamma t}\tilde\bN^1_6(\bu, \bH)\|_{L_p(\BR, H^1_q(\Omega))}
+ \gamma^{1/2}\|e^{-\gamma t}\tilde\bN^1_6(\bu, \bH)\|_{L_p(\BR, L_q(\Omega))}\\
&\quad \leq CT^{1/(2p')}(B+E^1_T(\bu_+)) \\
&\qquad\times
(\|\nabla(\CE[E_-[\bH_+]], \CE[E_+[\bH_-]])\|_{H^{1/2}_p(\BR, L_q(\Omega))}
+ \|\nabla(\CE[E_-[\bH_+]], \CE[E_+[\bH_-]])\|_{L_p(\BR, H^1_q(\Omega))})
\end{align*}
Thus, applying Lemma \ref{lem:4.5.4}, 
\eqref{4.14}, \eqref{4.13}, \eqref{4.10}, and \eqref{jump:1}, we have 
\begin{equation}\label{non:4.2}\begin{aligned}
\|e^{-\gamma t}\tilde\bN_{61}(\bu, \bH)\|_{H^{1/2}_p(\BR, L_q(\Omega))}
+ \|e^{-\gamma t}\tilde\bN_{61}(\bu, \bH)\|_{L_p(\BR, H^1_q(\Omega))}
+ \gamma^{1/2}\|e^{-\gamma t}\tilde\bN^1_6(\bu, \bH)\|_{L_p(\BR, L_q(\Omega))}\\
\leq CT^{1/(2p')}(B+E^1_T(\bu_+))(B+E^2_T(\bH)). 
\end{aligned}\end{equation}

%%%%%%%%
We now estimate $\bN_6^2(\bu, \bH)$. For this purpose we use the following
esitmate:
\begin{equation}\label{complex:1}
\|f\|_{H^{1/2}_p(\BR, L_q(\dot\Omega))}
\leq C\|f\|_{H^1_p(\BR, L_q(\dot\Omega))}^{1/2}
\|f\|_{L_p(\BR, L_q(\dot\Omega))}^{1/2}, 
\end{equation}
which follows from  complex interpolation theory.  Let 
\begin{align*}
A^2_1 & = \CE[E_-[\bu_+]]\otimes  \CE[E_-[\bH_+]], 
\quad 
A^2_2  = \CA_{62}(\Psi_\bu)A^2_1.
\end{align*}
And then, $\tilde \bN_{62}(\bu, \bH) = {\mathcal{B}}A^2_1 + A^2_2$. 
We further divide $A^2_1$ into 
$A^2_1 = A^2_{11} + A^2_{12} + A^2_{21} + A^2_{22}$ with 
\allowdisplaybreaks
\begin{align*}
A^2_{11} &= \CT[E_-[\bu_{0+}]]\otimes \CT[E_-[\tilde\bH_{0+}]], \\
A^2_{12} &=\CT[E_-[\bu_{0+}]]\otimes e_T[E_-[\bH_+]-\CT[E_-[\tilde\bH_{0+}]]],  \\
A^2_{21} & = e_T[E_-[\bu_+] - \CT[E_-[\bu_{0+}]]]\otimes \CT[E_-[\tilde\bH_{0+}]], \\
A^2_{22} &= e_T[E_-[\bu_+] - \CT[E_-[\bu_{0+}]]]\otimes  e_T[E_-[\bH_+]-\CT[E_-[\tilde\bH_{0+}]]]
\end{align*}
By  \eqref{initial:4.1},  \eqref{4.3}, \eqref{4.5}, and \eqref{4.13}, 
\begin{align*}
&\|e^{-\gamma t}A^2_{11}\|_{H^i_p(\BR, L_q(\Omega_+))} \\
&\quad\leq C(\|e^{-\gamma t}\CT[E_-[\bu_{0+}]]\|_{H^i_p(\BR, L_q(\Omega_+))}
\|\CT[E_-[\tilde\bH_{0+}]]\|_{L_\infty(\BR, H^1_q(\Omega_+)} \\
&\hspace{20mm}+\|\CT[E_-[\bu_{0+}]]\|_{L_\infty(\BR, H^1_q(\Omega_+)}
\|e^{-\gamma t}\CT[E_-[\tilde\bH_{0+}]]\|_{H^i_p(\BR, L_q(\Omega_+))}) \\
&\quad\leq C{e}^{2(\gamma-\gamma_0)}B^2; \\
&\|e^{-\gamma t}\pd_tA^2_{12}\|_{L_p(\BR, L_q(\Omega_+))}\\
&\quad\leq C\{\|\pd_t\CT[E_-[\bu_{0+}]]\|_{L_p(\BR, L_q(\Omega_+))}
\|e_T[E_-[\bH_+] - \CT[E_-[\tilde\bH_{0+}]]]\|_{L_\infty(\BR, H^1_q(\Omega_+))}\\
&\quad+ \|\CT[E_-[\bu_{0+}]]\|_{L_\infty(\BR, H^1_q(\Omega_+))}
\|\pd_te_T[E_-[\bH_+] - \CT[E_-[\tilde\bH_{0+}]]]\|_{L_p(\BR, L_q(\Omega_+))}\}\\
&\quad\leq CB(B+E^{2,+}_T(\bH_+)); \\
&\|e^{-\gamma t}A^2_{12}\|_{L_p(\BR, L_q(\Omega_+))}\\
&\quad  \leq
T^{1/p}\|\CT[E_-[\bu_{0+}]]\|_{L_\infty(\BR, H^1_q(\Omega_+))}
\|e_T[E_-[\bH_+] - \CT[E_-[\tilde\bH_{0+}]]]\|_{L_\infty((0, 2T), L_q(\Omega_+))}\\
&\quad\leq CBT\|\pd_te_T[E_-[\bH_+] 
- \CT[E_-[\tilde\bH_{0+}]]\|_{L_p((0, 2T), L_q(\Omega_+))}\\
&\quad \leq CTB(B+E^{2,+}_T(\bH_+)), \\
&\|e^{-\gamma t}\pd_tA^2_{21}\|_{L_p(\BR, L_q(\Omega_+))}
\leq CB(B+E^{1}_T(\bu_+)); \\
&\|e^{-\gamma t}A^2_{21}\|_{L_p(\BR, L_q(\Omega_+))}
 \leq CTB(B+E^{1}_T(\bu_+)), \\
&\|e^{-\gamma t}\pd_tA^2_{22}\|_{L_p(\BR, L_q(\Omega_+))} \\
&\quad \leq C\{\|\pd_t e_T[E_-[\bu_+] - \CT[E_-[\tilde\bu_{0+}]]]\|_{L_p((0, 2T), L_q(\Omega_+))}
\|e_T[E_-[\bH_+] - \CT[E_-[\tilde\bH_{0+}]]]\|_{L_\infty((0, 2T), H^1_q(\Omega_+))}\\
&\quad +\| e_T[E_-[\bu_+] - \CT[E_-[\tilde\bu_{0+}]]]
\|_{L_\infty((0, 2T), H^1_q(\Omega_+))}
\|\pd_te_T[E_-[\bH_+] - \CT[E_-[\tilde\bH_{0+}]]]\|_{L_p((0, 2T), L_q(\Omega_+))}\\
&\quad \leq C(B+E^1_T(\bu_+))(B+E^{2,+}_T(\bH_+)); \\
&\|e^{-\gamma t}A^2_{22}\|_{L_p(\BR, L_q(\Omega_+))} \\
&\quad  \leq CT^{1/p}\| e_T[E_-[\bu_+] 
- \CT[E_-[\tilde\bu_{0+}]]]\|_{L_\infty((0, 2T), H^1_q(\Omega_+))}
\|e_T[E_-[\bH_+] - 
\CT[E_-[\tilde\bH_{0+}]]]\|_{L_\infty((0, 2T), L_q(\Omega_+))}\\
&\quad  \leq C(B+E^{1}_T(\bu_+))T
\|\pd_t e_T[E_-[\bH_+] - \CT[E_-[\tilde\bH_{0+}]]]\|_{L_p((0, 2T), L_q(\Omega_+))}\\
&\quad  \leq CT(B+E^1_T(\bu_+))(B+E^{2,+}_T(\bH_+))). 
\end{align*}
Using \eqref{complex:1}, 
we have
\begin{align}\label{non:4.4*}\begin{aligned}
\|e^{-\gamma t}A^2_1\|_{H^{1/2}_p(\BR, L_q(\Omega_+))}
&\leq C\sum_{i,j=1}^{2} \|e^{-\gamma t}A_{ij}\|_{H^1_p(\BR,L_q(\dot\Omega))}^{1/2}
\|e^{-\gamma t}A_{ij}\|_{L_p(\BR,L_q(\dot\Omega))}^{1/2} \\
&\leq C(e^{2(\gamma-\gamma_0)}B^2 +T^{1/2}(B+E^1_T(\bu_+))(B+E^{2,+}_T(\bH_+))). 
\end{aligned}\end{align}
And also, by \eqref{4.3}, \eqref{4.5}, \eqref{4.13}, and \eqref{initial:4.1}, 
\begin{align*}
\|e^{-\gamma t}A^2_{11}\|_{L_p(\BR, H^1_q(\Omega_+))}
& \leq C\|e^{-\gamma t}\CT[E_-[\bu_{0+}]]\|_{L_p(\BR, H^1_q(\Omega_+))}
\|\CT[E_-[\tilde\bH_{0+}]]\|_{L_\infty(\BR, H^1_q(\Omega_+))}\\
& \leq Ce^{2(\gamma-\gamma_0)}B^2; \\
\|e^{-\gamma t}A^2_{12}\|_{L_p(\BR, H^1_q(\Omega_+))} 
& \leq C\|\CT[E_-[\bu_{0+}]]\|_{L_\infty(\BR, H^1_q(\Omega_+))}
\|e_T[E_-[\bH_+]-\CT[E_-[\tilde\bH_{0+}]]]\|_{L_p((0, 2T), H^1_q(\Omega_+))} \\
&\leq CBT^{1/p}\|e_T[E_-[\bH_+]-\CT[E_-[\tilde\bH_{0+}]]]\|_{L_\infty((0, 2T), H^1_q(\Omega_+))}\\
& \leq CB(B+E^{2,+}_T(\bH_+))T^{1/p}; \\
\|e^{-\gamma t}A^2_{21}\|_{L_p(\BR, H^1_q(\Omega_+))} 
& \leq CB(B+E^1_T(\bu_+))T^{1/p};  \\
\|e^{-\gamma t}A^2_{22}\|_{L_p(\BR, H^1_q(\Omega_+))} 
& \leq C\|e_T[E_-[\bH_+]-\CT[E_-[\tilde\bH_{0+}]]]\|_{L_p((0, 2T), H^1_q(\Omega_+))}\\
&\quad\times
\|e_T[E_-[[\bu_+]-\CT[E_-[\bu_{0+}]]]\|_{L_\infty((0, 2T), H^1_q(\Omega_+))}
\\
&\leq CT^{1/p}(B+E^1_T(\bu_+))(B+E^{2,+}_T(\bH_+)),
\end{align*}
and so we have
$$\|e^{-\gamma t}A^2_1\|_{L_p(\BR, H^1_q(\Omega_+))}
\leq C(e^{2(\gamma-\gamma_0)}B^2 + T^{1/p}(B+E^1_T(\bu_+))(B+E^{2,+}_T(\bH_+))),
$$
which, combined with \eqref{non:4.4*}, yields that
\begin{equation}\label{non:4.4}\begin{aligned}
&\|e^{-\gamma t}A^2_1\|_{H^{1/2}_p(\BR, L_q(\Omega_+))}
+ \|e^{-\gamma t}A^2_1\|_{L_p(\BR, H^1_q(\Omega_+))}
+ \gamma^{1/2}\|e^{-\gamma t}A^2_1\|_{L_p(\BR, L_q(\Omega_+))}\\
&\quad
\leq C\{e^{2(\gamma-\gamma_0)}B^2 + (T^{1/2}+T^{1/p})(B+E^1_+(\bu_+))(B+E^{2,+}_T(\bH_+))\}.
\end{aligned}\end{equation}
By \eqref{e4.4}, \eqref{e4.15}, \eqref{non:4.4}, and Lemma \ref{lem:4.5.3}, 
\begin{equation}\label{non:4.5}\begin{aligned}
&\|e^{-\gamma t}A^2_2\|_{H^{1/2}_p(\BR, L_q(\Omega_+))}
+ \|e^{-\gamma t}A^2_2\|_{L_p(\BR, H^1_q(\Omega_+))} 
+ \gamma^{1/2}\|e^{-\gamma t}A^2_2\|_{L_p(\BR, L_q(\Omega_+))}\\
&\quad \leq C((B+E^1_T(\bu_+))T^{1/(2p')} {)} 
(\|A^2_1\|_{H^{1/2}_p(\BR, L_q(\Omega))} + \|A^2_1\|_{L_p(\BR, H^1_q(\Omega_+))})
\\
& \quad\leq C((B+E^1_+(\bu_+))T^{1/(2p')})
\{e^{2(\gamma-\gamma_0)}B^2 + (T^{1/2}+T^{1/p})(B+E^1_+(\bu_+))(B+E^{2,+}_T(\bH_+))\}.
\end{aligned}\end{equation}
Combining \eqref{non:4.2}, \eqref{non:4.4}, and \eqref{non:4.5} yields that
\begin{equation}\label{n:6}\begin{aligned}
&\|\tilde\bN_6(\bu, \bH)\|_{H^{1/2}_p(\BR, L_q(\Omega))} + 
\|\tilde\bN_6(\bu, \bH)\|_{L_p(\BR, H^1_q(\Omega))} 
+\gamma^{1/2}\|\tilde\bN_6(\bu, \bH)\|_{L_p(\BR, L_q(\Omega))}\\
&\quad \leq C[T^{1/(2p')}(B+E^1_T(\bu_+))(B+E^2_T(\bH))\\
&\quad + (1+(B+E^1_T(\bu_+))T^{1/(2p')})
\{e^{2(\gamma-\gamma_0)}B^2 + (T^{1/2}+T^{1/p})(B+E^1_+(\bu_+))(B+E^{2,+}_T(\bH_+))\}].
\end{aligned}\end{equation}
Recalling the formula of $\CN_6$ in \eqref{2.16*}, we define an extension, $\tilde \CN_6$, of $\CN_6$ by setting
$\tilde\CN_6 = \tilde\CN_{61} + \tilde\CN_{62} + \tilde\CN_{63} + \tilde\CN_{64}$ with 
\begin{align*}
\tilde\CN_{61} & = (\CA_{61}(e_T[\Psi_{\bu^1}])-\CA_{61}(e_T[\Psi_{\bu^2}]))
(\alpha_+^{-1}\nabla\CE[E_-[\bH_+^1]]-\alpha^{-1}_-\nabla\CE[E_+[\bH_-^1]])\\
\tilde \CN_{62} & = ( \CA_{62}(e_T[\Psi_{\bu^1}])- \CA_{62}(e_T[\Psi_{\bu^2}]))
\CE[E_-[\bu_+^1]]\otimes \CE[E_-[\bH_+^1]]\\
\tilde\CN_{63}&= \CA_{61}(e_T[\Psi_{\bu^2}])\{\alpha_+^{-1}\nabla(\CE[E_-[\bH_+^1]]-
\CE[E_-[\bH^2_+]])-\alpha^{-1}_-\nabla
(\CE[E_+[\bH_-^1]]-\CE[E_+[\bH_-^2]])\}\\
\tilde\CN_{64} & = (\CB+\CA_{62}(e_T[\Psi_{\bu^2}]))((\CE[E_-[\bu_+^1]]-
\CE[E_-[\bu_+^2]])\otimes \CE[E_-[\bH_+^1]]\\
&\phantom{ (\CB+\CA_{62}(e_T[\Psi_{\bu^2}]))}\,\,\,\,
+ \CE[E_-[\bu_+^2]]\otimes(\CE[E_-[\bH_+^1]]-\CE[E_-[\bH_+^2]])).
\end{align*}
Setting $\CD_{6i} = \CA_{6i}(e_T[\Psi_{\bu^1}]) - \CA_{6i}(e_T[\Psi_{\bu^2}])$ for 
notational simplicity, by \eqref{e4.4} and \eqref{e4.15} we have
\begin{equation}\label{4.16}
(\|\pd_t\CD_{6i}\|_{L_\infty(\BR, L_q(\dot\Omega))} + 
\|\CD_{6i}\|_{L_\infty(\BR, H^1_q(\dot\Omega))})^{1/2}
\|\CD_{6i}\|_{L_\infty(\BR, H^1_q(\dot\Omega))}^{1/2} \leq CT^{1/(2p')}E^1_T(\bu^1_+-\bu^2_+).
\end{equation}
 And, noting  Lemma \ref{lem:4.5.4}, we have 
$$\|\nabla \CE[E_\mp[\bH^1_\pm]]\|_{H^{1/2}_p(\BR, L_q(\Omega))}
+ \|\nabla \CE[E_\mp[\bH^1_\pm]]\|_{L_p(\BR, H^1_q(\Omega))} \leq C(B+E^2_T(\bH^1)),
$$
which, combined with Lemma \ref{lem:4.5.3}  
and \eqref{4.16}, yields that
\begin{equation}\label{nd.6.1} \begin{aligned}
\|e^{-\gamma t}\tilde\CN_{61}\|_{H^{1/2}_p(\BR, L_q(\dot\Omega))}
+ \|e^{-\gamma t}\tilde\CN_{61}\|_{L_p(\BR, H^1_q(\dot\Omega))}
+ \gamma^{1/2}\|e^{-\gamma t}\tilde\CN_{61}\|_{L_p(\BR, L_q(\dot\Omega))} \\
\leq CT^{1/(2p')}(B+E^2_T(\bH^1))E^1_T(\bu^1_+-\bu^2_+).
\end{aligned}\end{equation}
Analogously, by \eqref{4.16}, Lemma \ref{lem:4.5.3} and 
\eqref{non:4.4}, we have
\begin{equation}\label{nd.6.2} \begin{aligned}
&\|e^{-\gamma t}\tilde\CN_{62}\|_{H^{1/2}_p(\BR, L_q(\dot\Omega))}
+ \|e^{-\gamma t}\tilde\CN_{62}\|_{L_p(\BR, H^1_q(\dot\Omega))}
+ \gamma^{1/2}\|e^{-\gamma t}\tilde\CN_{62}\|_{L_p(\BR, L_q(\dot\Omega))} \\
&\quad \leq CT^{1/(2p')}\{e^{2(\gamma-\gamma_0)}B^2+(T^{1/2}+T^{1/p})(B+E^1_T(\bu^1_+))
(B+E^{2,+}_T(\bH^1_+))\}E^1_T(\bu^1_+-\bu^2_+).
\end{aligned}\end{equation}
Since
\begin{equation}\label{4.16*}
\CE[E_\mp[\bH^1_\pm]]-\CE[E_\mp[\bH^2_\pm]]
= e_T[\bH^1_\pm - \bH^2_\pm], 
\quad \CE[E_-[\bu^1_+]]-\CE[E_-[\bu^2_+]]=e_T(\bu^1_+ - \bu^2_+)
\end{equation}
as follows from \eqref{4.14}, by Lemma \ref{lem:4.5.3}, Lemma \ref{lem:4.5.4},
\eqref{e4.4}, and \eqref{e4.15}, we have
\begin{align}\label{nd.6.3}\begin{aligned}
\|e^{-\gamma t}\tilde\CN_{63}\|_{H^{1/2}_p(\BR, L_q(\dot\Omega))} + 
\|e^{-\gamma t}\tilde\CN_{63}\|_{L_p(\BR, H^1_q(\dot\Omega))} +
\gamma^{1/2}\|e^{-\gamma t}\tilde\CN_{63}\|_{L_p(\BR, L_q(\dot\Omega))} \\
\leq CT^{1/(2p')}(B+E^1_T(\bu^2_+))E^2_T(\bH^1-\bH^2).
\end{aligned}\end{align}
In view of \eqref{4.16*},  by \eqref{4.9} we have 
\begin{align*}
&\|e^{-\gamma t}(\CE[E_\mp[\bH^1_\pm]]-\CE[E_\mp[\bH^2_\pm]])\|_{L_p(\BR, L_q(\dot\Omega))}
\leq CT^{1/p}\|e_T[\bH^1_\pm - \bH^2_\pm]\|_{L_\infty((0, 2T), L_q(\dot\Omega))}\\
&\quad \leq CT\|\pd_t(e_T[\bH^1_\pm - \bH^2_\pm]\|_{L_p((0, 2T), L_q(\dot\Omega))}
\leq CTE^{2,\pm}_T(\bH^1_\pm - \bH^2_\pm). 
\end{align*}
Analogously, 
\begin{align*}\|e^{-\gamma t}(\CE[E_-[\bu^1_+]]-\CE[E_-[\bu^2_+]])\|_{L_p(\BR, L_q(\dot\Omega))}
&\leq CTE^1_T(\bu^1_+-\bu^2_+);\\
\|e^{-\gamma t}(\CE[E_\mp[\bH^1_\pm]]-\CE[E_\mp[\bH^2_\pm]])\|_{L_p(\BR, H^1_q(\dot\Omega))}
& \leq CT^{1/p}\|e_T[\bH^1_\pm - \bH^2_\pm]\|_{L_\infty((0, 2T), H^1_q(\dot\Omega))}\\
&\leq CT^{1/p}E^{2,\pm}_T(\bH^1_\pm-\bH^2_\pm); \\
\|e^{-\gamma t}(\CE[E_-[\bu^1_+]]-\CE[E_-[\bu^2_+]])\|_{L_p(\BR, H^1_q(\dot\Omega))}
&\leq CT^{1/p}E^1_T(\bu^1_+-\bu^2_+).
\end{align*}
Thus, by \eqref{complex:1},
\begin{align*}
\|e^{-\gamma t}(\CE[E_-[\bu^1_+]]-\CE[E_-[\bu^2_+]])
\otimes \CE[E_-[\bH^1_+]]\|_{H^{1/2}_p(\BR, L_q(\dot\Omega))}
\leq CT^{1/2}(B+E^{2,+}_T(\bH^1_+))E^1_T(\bu^1_+-\bu^2_+):\\
\|e^{-\gamma t}\CE[E_-[\bu^1_+]]\otimes
(\CE[E_-[\bH^1_+]]-\CE[E_-[\bH^2_+]])\|_{H^{1/2}_p(\BR, L_q(\dot\Omega))}
\leq CT^{1/2}(B+E^1_T(\bu^2_+))E^{2,+}_T(\bH^1_+-\bH^2_+):\\
\|e^{-\gamma t}(\CE[E_-[\bu^1_+]]-\CE[E_-[\bu^2_+]])
\otimes \CE[E_-[\bH^1_+]]\|_{L_p(\BR, H^1_q(\dot\Omega))}
\leq CT^{1/p}(B+E^{2,+}_T(\bH^1_+))E^1_T(\bu^1_+-\bu^2_+):\\
\|e^{-\gamma t}\CE[E_-[\bu^1_+]]\otimes
(\CE[E_-[\bH^1_+]]-\CE[E_-[\bH^2_+]])\|_{L_p(\BR, H^1_q(\dot\Omega))}
\leq CT^{1/p}(B+E^1_T(\bu^2_+))E^{2,+}_T(\bH^1_+-\bH^2_+),
\end{align*}
which, combined with Lemma \ref{lem:4.5.3}, \eqref{e4.4}, and \eqref{e4.15}, yields that 
\begin{equation}\label{nd.6.4}\begin{aligned}
&\|e^{-\gamma t}\tilde\CN_{64}\|_{H^{1/2}_p(\BR, L_q(\dot\Omega))}
+ \|e^{-\gamma t}\tilde\CN_{64}\|_{L_p(\BR, H^1_q(\dot\Omega))}
+ \gamma^{1/2}\|e^{-\gamma t}\tilde\CN_{64}\|_{L_p(\BR, L_q(\dot\Omega))}\\
&\quad\leq C(1+T^{1/(2p')}(B+E^1_T(\bu^2_+)))\\
&\qquad \times (T^{1/2}+T^{1/p})((B+E^{2,+}_T(\bH^1_+))E^1_T(\bu^1_+-\bu^2_+)
+ (B+E^1_T(\bu^2_+))E^{2,+}_T(\bH^1_+-\bH^2_+)).
\end{aligned}\end{equation}
Combining \eqref{nd.6.1}, \eqref{nd.6.2}, \eqref{nd.6.3}, and \eqref{nd.6.4} yields that
\begin{equation}\label{nd:6}\begin{aligned}
&\|e^{-\gamma t}\tilde\CN_6\|_{H^{1/2}_p(\BR, L_q(\dot\Omega))}
+ \|e^{-\gamma t}\tilde\CN_6\|_{L_p(\BR, H^1_q(\dot\Omega))}
+ \gamma^{1/2}\|e^{-\gamma t}\tilde\CN_6\|_{L_p(\BR, L_q(\dot\Omega))}\\
& \quad\leq C\{T^{1/(2p')}(B+E^2_T(\bH^1) +e^{2(\gamma-\gamma_0)}B^2)\\
&\qquad+(T^{1/2}+T^{1/p})\{1+T^{1/(2p')}(B + E^1_T(\bu^1_+)+E^1_T(\bu^2_+))\}(B+E^{2,+}_T(\bH^1_+))\}
E^1_T(\bu^1_+-\bu^2_+)\\
&\quad + C\{T^{1/(2p')}(B+E^1_T(\bu^2_+)) \\
&\qquad+ (T^{1/2}+T^{1/p})(1+T^{1/(2p')}(B+E^1_T(\bu^2_+)))(B+E^1_T(\bu^2_+))\}
E^{2}_T(\bH^1-\bH^2).
\end{aligned}\end{equation}
We now consider $N_7$ and $\CN_7$.  In view of \eqref{2.18}, we define extensions of 
$N_7$ and $\CN_7$ to $\BR\setminus(0, T)$ by setting
\begin{align*}
\tilde N_7(\bu, \bH)  = &\,\,\CA_7(e_T[\Psi_\bu])\nabla(\mu_+\CE[E_-[\bH_+]]-\mu_-\CE[E_+[\bH_-]]), \\
\tilde\CN_7  = &(\CA_7(e_T[\Psi_{\bu^1}]) - \CA_7(e_T[\Psi_{\bu^2}]))
\nabla(\mu_+\CE[E_-[\bH^1_+]]-\mu_-\CE[E_+[\bH^1_-]]) \\
+ &\,\,\CA_7(e_T[\Psi_{\bu^2}])\nabla\{\mu_+(\CE[E_-[\bH^1_+]]-\CE[E_-[\bH^2_+]])
-\mu_-(\CE[E_+[\bH^1_-]]-\CE[E_+[\bH^2_-]])\}. 
\end{align*}
Employing the same argument as in proving \eqref{non:4.2}, \eqref{nd.6.1}, and \eqref{nd.6.3}, we have 
\begin{align}
&\|e^{-\gamma t}\tilde N_7(\bu, \bH)\|_{H^{1/2}_p(\BR, L_q(\dot\Omega))}
+ \|e^{-\gamma t}\tilde N_7(\bu, \bH)\|_{L_p(\BR, H^1_q(\dot\Omega))}
+ \gamma^{1/2}\|e^{-\gamma t}\tilde N_7(\bu, \bH)\|_{L_p(\BR, L_q(\dot\Omega))}\nonumber \\
&\quad \leq CT^{1/(2p')}(B+E^1_T(\bu_+))(B+E^2_T(\bH)), \label{n:7}\\
&\|e^{-\gamma t}\tilde \CN_7\|_{H^{1/2}_p(\BR, L_q(\dot\Omega))}
+ \|e^{-\gamma t}\tilde \CN_7\|_{L_p(\BR, H^1_q(\dot\Omega))} 
+ \gamma^{1/2}\|e^{-\gamma t}\tilde \CN_7\|_{L_p(\BR, L_q(\dot\Omega))} \nonumber \\
&\quad\leq CT^{1/(2p')}\{(B+E^2_T(\bH^1))E^1_T(\bu^1_+-\bu^2_+)
+(B+E^1_T(\bu^2_+))E^2_T(\bH^1-\bH^2)\}. \label{nd:7}
\end{align}

We finally define extensions of $(N_8(\bu, \bH), \bN_9(\bu, \bH)) = \CA_8(\Psi_\bu)[[\bH]]$ 
and  $\CN_8$ 
by setting
\begin{align*}
&(\tilde N_8(\bu, \bH), \tilde \bN_9(\bu, \bH)) = \CA_8(e_T[\Psi_\bu])(\mu_+\CE[E_-[\bH_+]]-\mu_-\CE_-[E_+[\bH_-]]); \\
&\tilde\CN_8  = (\CA_8(e_T[\Psi_{\bu^1}]) - \CA_8(e_T[\Psi_{\bu^2}]))
(\mu_+\CE[E_-[\bH^1_+]]-\mu_-\CE_-[E_+[\bH^1_-]]) \\
&+\CA_8(e_T[\Psi_{\bu^2}])\{\mu_+(\CE[E_-[\bH^1_+]]-\CE[E_-[\bH^2]])-
\mu_-(\CE_-[E_+[\bH^1_-]]-\CE_-[E_+[\bH^2_-]]) \}.
\end{align*}
By \eqref{e4.4}, \eqref{e4.15}, and \eqref{4.13}, we have 
\begin{align}
&\|e^{-\gamma t}(\tilde N_8(\bu, \bH), \tilde \bN_9(\bu, \bH))\|_{L_p(\BR, H^2_q(\dot\Omega))} +
\|e^{-\gamma t}\pd_t(\tilde N_8(\bu, \bH), \tilde \bN_9(\bu, \bH))\|_{L_p(\BR, L_q(\dot\Omega))} \nonumber \\
&\quad  \leq C\{T^{1/p'}E^1_T(\bu_+)E^2_T(\bH) 
+ T^{1/p}(B+E^1_T(\bu_+))(B+E^2_T(\bH))\}; \label{n:8}\\
&\|e^{-\gamma t}\tilde\CN_8\|_{L_p(\BR, H^2_q(\dot\Omega))} +
 \|e^{-\gamma t}\pd_t\tilde \CN_8\|_{L_p(\BR, L_q(\dot\Omega))} \nonumber\\
&\quad \leq C{(T^{1/p'}E^2_T(\bH^1))+T^{1/p}(B+E^2_T(\bH^1))}E^1_T(\bu^1_+-\bu^2_+) 
\nonumber\\
&\hskip1cm + C\{T^{1/p'}E^1_T(\bu^2_+)
+ T^{1/p}{(B+E^1_T(\bu^2_+))}\}E^2_T(\bH^1-\bH^2). \label{nd:8}
\end{align}

%%%%%%%%%%%%%%%%%%%%%%%%%%%%%%
%%%%%%%%%%%%%%%%%%%%%%
\section{A proof of Theorem \ref{thm:main}}\label{sec:5}

We shall prove Theorem \ref{thm:main} by contraction mapping principle. 
For this purpose, we define an underlying space $\bU_{T,L}$ for a large
number $L > 1$ and a small time $T \in (0, 1)$ by setting 
\begin{equation}\label{under:0}\begin{aligned}
\bU_{T, L} & = \{(\bu_+, \tilde\bH_\pm) \mid \bu_+ 
\in H^1_p((0, T), H^1_q(\Omega_+)^N) \cap L_p((0, T), H^3_q(\Omega_+)^N), \\
&\tilde\bH_\pm \in H^1_p((0, T), L_q(\Omega_\pm)^N) \cap 
L_p((0, T), H^2_q(\Omega_\pm)^N), \\
&\bu_+|_{t=0} = \bu_{0+} \enskip\text{in $\Omega_+$}, \quad 
\tilde\bH_\pm|_{t=0} = \tilde\bH_{0\pm} \enskip\text{in $\Omega_\pm$}, \\
&E_T^1(\bu_+)\leq L, \quad  E^{2,\pm}_T(\tilde\bH_\pm) \leq  L\}.
\end{aligned}\end{equation}
Let $B$ be a positive number  for which initial
data $\bu_{0+}$ and $\tilde\bH_{0\pm}$ for equations \eqref{mhd.3}
satisfy the condition \eqref{initial:4.1}. 
%For simplicity we may assume that $0 < B \leq L$.   
%Moreover, in view of \eqref{4.4} and \eqref{4.4.2}, we assume that 
%\begin{equation}\label{assump:1}   (B+L)T^{1/p'} < 1. \end{equation}

Let $(\bu_+, \tilde\bH_\pm)$ be an element of $\bU_{T, L}$, and let 
$\bN_{5\pm}(\bu, \tilde\bH)$, 
$\bN_6(\bu, \tilde\bH)$, $N_7(\bu, \tilde\bH)$, $N_8(\bu, \tilde\bH)$,
and $\bN_9(\bu, \tilde\bH)$ be respective non-linear terms defined
in \eqref{2.13}, \eqref{2.16}, \eqref{2.17}, \eqref{2.18*}, and \eqref{2.19*}.
Let $\bH$ be a solution of equations:
\allowdisplaybreaks{
\begin{align}
\mu\pd_t\bH - \alpha^{-1}\Delta\bH& = \bN_5(\bu, \tilde\bH)&
\quad&\text{in $\dot\Omega\times(0, T)$}, \nonumber \\
[[\alpha^{-1}\curl\bH]]\bn &= \bN_6(\bu, \tilde\bH)&
\quad&\text{on $\Gamma\times(0, T)$}, \nonumber \\
[[\mu\dv\bH]] &=N_7(\bu, \tilde\bH)&
\quad&\text{on $\Gamma\times(0, T)$}, \nonumber \\
[[\mu \bH\cdot\bn]] &= N_8(\bu, \tilde\bH)&
\quad&\text{on $\Gamma\times(0, T)$}, \nonumber \\
[[\bH-<\bH, \bn>\bn]] &=\bN_9(\bu, \tilde\bH)&\quad 
\quad&\text{on $\Gamma\times(0, T)$}, \nonumber \\
\bn_-\cdot\bH_- = 0, \quad
(\curl\bH_-)\bn_- &= 0& \quad
&\text{on $S_\pm\times(0, T)$},  \nonumber \\
\bH|_{t=0} &= \tilde\bH_0&\quad
&\text{in $\dot\Omega$}.\label{eq:5.1}
\end{align}}
Next, let $\bN_1(\bu_+, \bH_+)$, $N_2(\bu_+)$, $\bN_3(\bu_+)$
 and $\bN_4(\bu_+, \bH_+)$ be respective non-linear
terms given in \eqref{2.12}, \eqref{2.11}, and \eqref{2.15} by replacing
$\tilde\bH_+$ with $\bH_+$, where $\bH_+=\bH|_{\Omega_+}$ and 
$\bH$ is a solution of equations \eqref{eq:5.1}. And then, let $\bv$ be 
a solution of equations:
\begin{equation}\label{eq:5.2}
\begin{aligned}
\rho\pd_t\bv_+ - \DV\bT(\bv_+, \fq) &= \bN_1(\bu_+, \bH_+)
&\quad &\text{in $\Omega_+\times(0, T)$}, \\
 \dv\bv_+= N_2(\bu_+) &= \dv\bN_3(\bu_+)
&\quad &\text{in $\Omega_+\times(0, T)$}, \\
\bT(\bv_+, \fq)\bn&= \bN_4(\bu_+, \bH_+)
&\quad &\text{on $\Gamma\times(0, T)$}, \\
%\bv_+&=0
%&\quad&\text{on $S_+\times(0, T)$}, \\
\bv_+|_{t=0} & = \bu_{0+}
&\quad&\text{in $\Omega_+$}.
\end{aligned}
\end{equation}
%%%%%%%%%%%%%%%%%%%%%%
Recalling that $E^1_T(\bu_+) \leq L$,  in view of  \eqref{4.3}, \eqref{4.4}, and \eqref{2.1**}
we choose $T > 0$ so small that
$$\|\Psi_\bu\|_{L_\infty((0, T), L_\infty(\Omega))} 
\leq C\|\Psi_\bu\|_{L_\infty((0, T), H^2_q(\Omega))} 
\leq CT^{1/p'}L \leq \delta.
$$
Moreover, in view of \eqref{4.4.2}, we choose $T>0$ in such a way that
$$T^{1/p'}(E^1_T(\bu_+)+B) \leq T^{1/p'}(L+B) \leq 1.$$
Let
$\tilde\bh=(\tilde\bN_6(\bu, \tilde\bH), N_7(\bu, \tilde\bH))$.
and 
$\tilde\bk=(\tilde N_8(\bu, \tilde\bH), \tilde\bN_9(\bu, \tilde\bH))$, and 
let $F_H(\bff, \tilde\bh, \tilde\bk)$ a symbol given in Theorem \ref{thm:3.2.1}. 
By \eqref{n:5}, \eqref{n:6}, \eqref{n:7}, and \eqref{n:8},
we have
\begin{align*}
F_H(\bN_5(\bu, \tilde\bH), \tilde\bh, \tilde\bk) &\leq C[T^{1/p}(B+L)^2 
+ T^{1/p'}(B+L)^2+T^{1/(2p')}(B+L)^2\\
&+(1+(B+L)T^{1/(2p')})\{e^{2(\gamma-\gamma_0)}B^2
+(T^{1/2}+T^{1/p})(B+L)^2\}]
\end{align*}
for any $\gamma\geq\gamma_0$. We fix $\gamma$ as $\gamma= \gamma_0$. 
Let $\alpha = \min(1/p. 1/{p'}, 1/{2p'}, 1/2, s/{p'(1+s)})$.  Since $0 < t< 1$, 
there exsit positive constants $M_1$ and $M_2$ for which 
$$F_H(\bN_5(\bu, \tilde\bH), \tilde\bh, \tilde\bk) 
\leq M_1B^2 + M_2((L+B)^2  + (L+B)^3)T^\alpha.$$
Applying Theorem \ref{thm:3.2.1} to equations \eqref{eq:5.1} yields that 
$E^2_T(\bH) \leq Ce^{\gamma_0T}(B + F_H(\bN_5(\bu, \tilde\bH), \tilde\bh, \tilde\bk))$
for some constant $C_1$.
Choosing $T>0$ in such a way that $\gamma_0T \leq 1$ gives that 
\begin{equation}\label{5.2}
E^2_T(\bH) \leq C_1e[{B + M_1B^2} + M_2({(L+B)^2 + (L+B)^3})T^\alpha]
\end{equation}
In particular, we choose $T > 0$ so small that 
$M_2({(L+B)^2 + (L+B)^3})T^\alpha \leq {B + M_1B^2}$ and $L > 0$ so large 
that ${4}C_1e{(B + M_1B^2)} \leq L$, and then by \eqref{5.2} 
\begin{equation}\label{5.3} 
E^{2,\pm}_T(\bH^\pm) \leq L{/2}.
\end{equation}

We next consider equations \eqref{eq:5.2}.  Let $F_v$ be a symbol given in
Theorem \ref{thm:3.1.1}.  By \eqref{n:1}, \eqref{n:2}, \eqref{n:3}, \eqref{5.2},
and \eqref{5.3},
\begin{align*}
&F_v(\bN_1(\bu_+, \bH_+), N_2(\bu_+), \bN_3(\bu_+), \bN_4(\bu_+, \bH))\\
&\leq C(T^{1/p'}L^2 + T^{1/p}(L+B)^2 + T^{s/(p'(1+s))}{(L^2+(L+B)L^2)} +{(1+B)}BE^{2,+}_T(\bH_+))\\
&\leq C[T^{1/p'}L^2 + T^{1/p}(L+B)^2 + T^{s/(p'(1+s))}{(L^2+(L+B)L^2)}\\
&\qquad +{(1+B)}BCe\{M_1B^2 + M_2((L+B)^2 + (L+B)B^2 + (L+B)^3)T^\alpha\}],
\end{align*}
which yields that 
$$F_v(\bN_1(\bu_+, \bH_+), N_2(\bu_+), \bN_3(\bu_+), \bN_4(\bu_+, \bH))
\leq M_3{(1+B)^2}B^2 + M_4{(1+B)^2}((L+B)^2 + (L+B)^3)T^\alpha
$$
for some constants  $M_3$ and $M_4$.
Thus, applying Theorem \ref{thm:3.1.1} with $0 < T < 1$ to equations \eqref{eq:5.2}, we have
$$E^1_T(\bv) \leq C_2e^{\gamma_1T}\{B + M_3{(1+B)^2}B^2 + M_4{(1+B)^2}((L+B)^2 + (L+B)^3)T^\alpha\}.
$$
for some constant $C_2$.
Recalling that $\gamma_1 \leq \gamma_0$ and $\gamma_0T \leq 1$, choosing
$T>0$ so small that $M_4{(1+B)^2}((L+B)^2 + (L+B)^3)T^\alpha \leq 
B + M_3{(1+B)^2}B^2$ and choosing $L > 0$ so large that 
$L \geq {4}C_2e(B + M_3{(1+B)^2}B^2)$, we have   
$E^1_T(\bv) \leq L{/2}$, which implies that 
$(\bv, \bH_\pm) \in \bU_{T, L}$.  In particular, we set
$L = {4}e\max(C_1{(B+M_1B^2)}, C_2(B + M_3{(1+B)^2}B^2))$.
Let $\Phi$ be a map acting on $(\bu, \tilde\bH) \in \bU_{T, L}$
by setting $\Phi(\bu, \tilde\bH) = (\bv, \bH)$, and then $\Phi$ is a map from
$\bU_{T,L}$ into itself. 

We now prove that $\Phi$ is a contraction map. Let $(\bu_+^i, \tilde\bH^i)
\in \bU_{T,L}$ ($i=1,2$) and set $(\bv^i_+, \bH^i) = \Phi(\bu_+^i, \tilde\bH^i)$.  
In view of \eqref{2.1**}, \eqref{4.4.2}, and \eqref{2.1a}, choosing
$T > 0$ smaller if necessary, we may assume that 
$$\|\Psi_{\bu^i}\|_{L_\infty((0, T), L_\infty(\Omega))} \leq \delta,
\quad
\|e_T[\Psi_{\bu^i}]\|_{L_\infty(\BR, L_\infty(\Omega))} \leq \delta, 
\quad T^{1/p'}(E^1_T(\bu_i) + B) \leq 1
$$
for $i=1,2$.
Set 
\begin{align*} \bv_+ &= \bv^1_+-\bv^2_+, \quad \bH = \bH^1-\bH^2, 
\quad \CN_i = \bN_i(\bu^1_+, \bH^1_+) - \bN_i(\bu^2_+, \bH^2_+),
\quad \CN_2 = N_2(\bu^1_+)-N_2(\bu^2_+), \\
\CN_3& = \bN_3(\bu^1_+)-\bN_3(\bu^2_+), 
\quad \CN_j = \bN_j(\bu^1_+, \tilde\bH^1_+) - \bN_j(\bu^2_+,\tilde  \bH^2_+),
\quad \CN_k = N_k(\bu_+^1, \tilde\bH^1) - N_k(\bu^2_+, \tilde\bH^2)
\end{align*}}
for $i= 1, 4$, $j=5, 6, 9$ and $k=7,8$. Noticing that 
$\bv_+^1|_{t=0}=\bv_+^2|_{t=0} = \bu_{0+}$, and $\bH^1_\pm|_{t=0}=\bH^2_\pm|_{t=0}=\tilde\bH_{0\pm}$, 
by \eqref{eq:5.1}, \eqref{eq:5.2} we see that $\bH$ satisfies the following 
equations:
\begin{equation}\label{eqh:5.1}\begin{aligned}
\mu\pd_t\bH - \alpha^{-1}\Delta\bH = \CN_5&&
\quad&\text{in $\dot\Omega\times(0, T)$},\\
[[\alpha^{-1}\curl\bH]]\bn = \CN_6, \quad 
[[\mu\dv\bH]] =\CN_7&&
\quad&\text{on $\Gamma\times(0, T)$}, \\
[[\mu \bH\cdot\bn]] = \CN_8, \quad 
[[\bH-<\bH, \bn>\bn]] =\CN_9&&\quad 
\quad&\text{on $\Gamma\times(0, T)$}, \\
\bn_\pm\cdot\bH_\pm = 0, \quad
(\curl\bH_\pm)\bn_\pm = 0&& \quad
&\text{on $S_\pm\times(0, T)$}, \\
\bH|_{t=0} = 0&&\quad
&\text{in $\dot\Omega$}.
\end{aligned}\end{equation}
and that $\bv_+$ satisfies the following equations:
\begin{equation}\label{eqh:5.2}
\begin{aligned}
\rho\pd_t\bv_+ - \DV\bT(\bv_+, \fq) = \CN_1&
&\quad &\text{in $\Omega_+\times(0, T)$}, \\
 \dv\bv_+= \CN_2 = \dv\CN_3&
&\quad &\text{in $\Omega_+\times(0, T)$}, \\
\bT(\bv_+, \fq)\bn= \CN_4&
&\quad &\text{on $\Gamma\times(0, T)$}, \\
\bv_+=0&
&\quad&\text{on $S_+\times(0, T)$}, \\
\bv_+|_{t=0}  = 0&
&\quad&\text{in $\Omega_+$}.
\end{aligned}
\end{equation}
Set $\tilde\CH = (\CN_6, \CN_7)$ and $\tilde\CK=(\CN_8, \CN_9)$.
By \eqref{nd:5}, \eqref{nd:6}, \eqref{nd:7}, and \eqref{nd:8}, we have
\begin{align*}
F_H(&\CN_5, \tilde\CH, \tilde\CK) \leq C[\{T^{1/p'}((B+L)^2 + (B+L))
+ T^{1/p}(B+L) + T^{1/(2p')}(B+L)\}E^1_T(\bu^1-\bu^2) \\
&\quad  +\{T^{1/p'}L + T^{1/p}(B+L) + T^{1/(2p')}(B+L)\}E^2_T(\tilde\bH^1-\tilde\bH^2) \\
&\quad +
\{T^{1/(2p')}(B+L+e^{\gamma-\gamma_0}B^2)+(T^{1/2}+T^{1/p})(1+T^{1/(2p')}(B+L))
(B+L)\}E^1_T(\bu^1-\bu^2)\\
&\quad +\{T^{1/(2p')}(B+L)+(T^{1/2}+T^{1/p})(1+T^{1/(2p')}(B+L))(B+L)\}
E^2_T(\tilde\bH^1-\tilde\bH^2)].
\end{align*}
for any $\gamma \geq \gamma_0$.  Thus, choosing $\gamma=\gamma_0$ and 
noting $0 < T < 1$, we have
\begin{equation}\label{5.4}
F_H(\CN_5, \tilde\CH, \tilde\CK) \leq C(B+L+(B+L)^2)T^\alpha(
E^1_T(\bu^1_+-\bu^2_+) + E^2_T(\tilde\bH^1-\tilde\bH^2)).
\end{equation}
Applying Theorem \ref{thm:3.2.1} to equations \eqref{eqh:5.1} and using
\eqref{5.4} gives that 
\begin{equation}\label{diff.1}
E^1_T(\bH^1-\bH^2) \leq M_5(B+L+(B+L)^2)T^\alpha(E^1_T(\bu^1_+-\bu^2_+)
+E^2_T(\tilde\bH^1-\tilde\bH^2))
\end{equation}
for some constant $M_5$, where we have used $\gamma_2\leq \gamma_0$ and 
$\gamma_0T\leq 1$. Moreover, by \eqref{nd.1}, \eqref{nd:2}, and \eqref{nd:4}
\begin{align*}
F_v(\CN_1, \CN_2, \CN_3, \CN_4) \leq &\,C\{
{(T^{1/p'}+T^{1/p})((L+B)+(L+B)^2)}\}E^1_T(\bu^1_+-\bu^2_+) \\
+ &\,C{\{T^{1/p}(B+L)^2+(1+T^{1/p}(B+L))(B+T^{s/p'(1+s)}L)\}}E^2_T(\bH^1-\bH^2)\},
\end{align*}
which, combined with \eqref{diff.1}, leads to 
\begin{align*}
F_v(\CN_1, \CN_2, \CN_3, \CN_4) \leq &\,{C(B+L+(B+L)^2)T^\alpha
E^1_T(\bu^1_+-\bu^2_+)} \\
{+}&\,{CM_5(B+L+(B+L)^2)^2T^\alpha(E^1_T(\bu^1_+-\bu^2_+) 
+ E^2_T(\tilde\bH^1-\tilde\bH^2))}.
\end{align*}
Thus, applying Theorem \ref{thm:3.1.1} to equations \eqref{eqh:5.2} leads to 
\begin{equation}\label{diff.2}\begin{aligned}
E^1_T(\bv^1_+-\bv^2_+) \leq M_6\{&{(B+L+(B+L)^2)}\\
&{+(B+L+(B+L)^2)^2}\}T^\alpha
(E^1_T(\bu^1_+-\bu^2_+) + E^2_T(\tilde\bH^1-\tilde\bH^2))
\end{aligned}\end{equation}
for some $M_6$, where we have used $\gamma_1 \leq \gamma_0$ and $\gamma_0T\leq 1$.
Choosing $T>0$ so small that 
$M_5(B+L+(B+L)^2)T^\alpha \leq 1/4$ in \eqref{diff.1} and 
$M_6\{{(B+L+(B+L)^2)+(B+L+(B+L)^2)^2}\}T^\alpha \leq 1/4$ in 
\eqref{diff.2} gives that 
$$E^1_T(\bv^1_+-\bv^2_+) + E^2_T(\bH_1-\bH_2) \leq (1/2)
(E^1_T(\bu^1_+-\bu^2_+) 
+ E^2_T(\tilde\bH^1-\tilde\bH^2)),
$$
which shows that the $\Phi$ is a contraction map.  Thus, the Banach fixed point
theorem yields the unique existence of a fixed point, $(\bu_+, \tilde\bH_\pm)
\in \bU_{T,L}$, of the map $\Phi$, which is a unique solution of 
equations \eqref{mhd.3}.  This completes the proof of Theorem \ref{thm:main}.
%%%%%%%%%%%%%%%%%%%
%%%%%%%%%%%%%%%
\appendix
\section{A Proof of Theorem \ref{thm:3.1.1}}
To prove the maximal $L_p$-$L_q$ regularity, acording to Shibata \cite{S1, S2, S20}  
%and an idea in \cite[Appendix 6.2]{DHMT} 
a main step is to prove the 
existence of  $\CR$-solver for the following model problem: 
\begin{equation}\label{Ap:1}\begin{aligned}
\lambda\bu - \DV(\mu\bD(\bu)-\fp\bI) =\bff&&\quad&\text{in $\BR^N_+$}, \\
\dv\bu = g = \dv\bg&&\quad&\text{in $\BR^N_+$}, \\
(\mu\bD(\bu)-\fp\bI)\bn=\bh&&\quad&\text{on $\BR^N_0$}, 
\end{aligned}\end{equation}
where $\BR^N_+ = \{x = (x_1, \ldots, x_N) \mid x_N > 0\}$, 
$\BR^N_0 = \{x = (x_1, \ldots, x_{N-1}, 0)
\mid (x_1, \ldots, x_{N-1}) \in \BR^{N-1}\}$, and $\bn = (0, \ldots, 0, -1)$.  The 
$\lambda$ is a complex parameter ranging in $\Sigma_{\epsilon, \lambda_0}
= \{ \lambda \in \BC\setminus\{0\} \mid |\arg\lambda| 
\leq \pi-\epsilon, |\lambda| \geq \lambda_0\}$
with $0 < \epsilon < \pi/2$ and $\lambda_0>0$.  In \cite{S1, S20}, it was proved the existence
of $\CR$-solvers, $\CS(\lambda)$, $\CP(\lambda)$,  for equations \eqref{Ap:1} 
which satisfy the following properties:
\begin{itemize}
\item[\thetag1]~
$
\CS(\lambda) \in {\rm Hol}\,(\Sigma_{\epsilon, \lambda_0}, \CL(\CX_q(\BR^N_+), 
H^2_q(\HS)^N))$, \quad
$\CP(\lambda) \in {\rm Hol}\,(\Sigma_{\epsilon, \lambda_0},
\CL(\CX_q(\HS), H^1_q(\HS) + \hat H^1_{q,0}(\HS)))$. 
\item[\thetag2]~ Problem \eqref{Ap:1} admits unique solutions 
$\bu=\CS(\lambda)F_\lambda(\bff, g, \bg, \bh)$ and 
$\fp = \CP(\lambda)F_\lambda(\bff, g, \bg, \bh)$
for any $(\bff, g, \bg, \bh) \in X_q$ and $\lambda \in \Sigma_{\epsilon, \lambda_0}$, 
where
 $F_\lambda(\bff, g, \bg, \bh) = (\bff, g, \lambda^{1/2}g, \lambda\bg, \bh, \lambda^{1/2}\bh),
 $. 
 \item[\thetag3]~\hskip2cm$
 \CR_{\CL(\CX_q(\HS), H^{2-j}_q(\HS)^N)}(\{(\tau\pd_\tau)^\ell(\lambda^{j/2}\CS(\lambda)) 
\mid
 \lambda \in \Sigma_{\epsilon, \lambda_0}\}) \leq r_b \quad(j=0,1,2)$,  \vskip0.1pc
 \hskip2.85cm $\CR_{\CL(\CX_q(\HS), L_q(\HS)^N)}
(\{(\tau\pd_\tau)^\ell(\nabla\CP(\lambda)) \mid
 \lambda \in \Sigma_{\epsilon, \lambda_0}\}) \leq r_b$\vskip0.1pc
 for $\ell=0,1$ with some constant $r_b$ depending on $\lambda_0$ and $\epsilon$. 
 \end{itemize}
 Here, $\lambda = \gamma + i\tau \in \BC$,  $\CR_{\CL(X, Y)}\CT$ denotes the 
$\CR$ norm of an operator family $\CT \subset \CL(X, Y)$, $\CL(X, Y)$ being the 
set of all bounded linear operators from $X$ into $Y$, 
 \begin{align*}
 \CX_q(\HS) & = \{(F_1, \ldots, F_6) \mid F_1, F_4, F_6 \in L_q(\HS)^N, \quad
 F_2 \in H^1_q(\HS), \quad F_3 \in L_q(\HS), \quad F_5 \in H^1_q(\HS)^N\}; \\
 X_q(\HS) & = \{(\bff, g, \bg, \bh) \mid \bff \in L_q(\HS)^N, \enskip g \in H^1_q(\HS), 
 \enskip \bg \in L_q(\HS)^N, \enskip 
 \bh \in H^1_q(\HS)^N, \enskip g = \dv\bg\}.
 \end{align*}
 The $F_1$, $F_2$, $F_3$, $F_4$, $F_5$, and $F_6$ are corresponding variables
 to $\bff$, $g$, $\lambda^{1/2}g$, $\lambda\bg$, $\bh$, and $\lambda^{1/2}\bh$, 
 respectively. The norm of $\CX_q(\HS)$ is defined by  setting
 $$\|(F_1, \ldots, F_6)\|_{\CX_q(\HS)}
 = \|(F_1, F_3, F_4, F_6)\|_{L_q(\HS)} + \|(F_2, F_5)\|_{H^1_q(\HS)}.
 $$
 In particular, we know an unique existence of solutions $\bu \in H^2_q(\HS)^N$ and
 $\fp \in H^1_q(\HS) + \hat H^1_{q,0}(\HS)$
\footnote{Here, we  just give an idea of obtaining third order regularities. An idea 
also is found in \cite[Appendix 6.2]{DHMT}. To prove 
Theorem \ref{thm:3.1.1} exactly from the $\CR$-bounded solution operators
point of view, we have to start returning  the non-zero $\bff$, $g$
and $\bg$ situation to the situation where $\bff=g=\bg=0$, which needs an idea.
We will give an exact proof of Theorem \ref{thm:3.1.1} in a forthcomming 
paper.}  of equations \eqref{Ap:1} 
 possessing the estimate:
 \begin{equation}\label{ap:2}\begin{aligned}
 &\|\lambda\bu\|_{L_q(\HS)} + \|\bu\|_{H^2_q(\HS)} + \|\nabla\fp\|_{L_q(\HS)}\\
 &\quad \leq C(\|\bff\|_{L_q(\HS)} + \|(g, \bh)\|_{H^1_q(\HS)}
 + \|\lambda^{1/2}(g, \bh)\|_{L_q(\HS)} + \|\lambda\bg\|_{L_q(\HS)}).
 \end{aligned}\end{equation}
 We now prove that $\bu \in H^3_q(\HS)^N$ and $\nabla\fp \in H^1_q(\HS)^N$ provided that
 $\bff \in H^1_q(\HS)^N$, $g \in H^2_q(\HS)$, $\bg \in H^1_q(\HS)^N$, and 
 $\bh \in H^2_q(\HS)^N$. Moreover, $\bu$ and $\fp$ satisfy the estimate:
 \begin{equation}\label{ap:3}\begin{aligned}
 &\|\lambda\bu\|_{H^1_q(\HS)} + \|\bu\|_{H^3_q(\HS)} + \|\nabla\fp\|_{H^1_q(\HS)} \\
 &\quad\leq C(\|\bff\|_{H^1_q(\HS)} + \|(g, \bh)\|_{H^2_q(\HS)}
 +\|\lambda(g, \bh)\|_{L_q(\HS)} + \|\lambda \bg\|_{H^1_q(\HS)}). 
 \end{aligned}\end{equation}
 In fact, differentiating equations \eqref{Ap:1} with respect to tangential variables $x_j$ ($j=1, \ldots, N-1$)
 and noting that $\pd_j\bu$ and $\pd_j\fp$ satisfy equations replacing 
 $\bff$, $g=\dv\bg$, and $\bh$ with $\pd_j\bff$, $\pd_jg=\dv\pd_j\bg$ and $\pd_j\bh$,
 by \eqref{ap:2} and the uniquness of solutions we see that 
 $\pd_j\bu \in H^2_q(\HS)^N$ and $\nabla \pd_j\fp \in L_q(\HS)^N$, and 
 \begin{equation}\label{ap:4}\begin{aligned}
 &\|\lambda\pd_j\bu\|_{L_q(\HS)} + \|\pd_j\bu\|_{H^2_q(\HS)} + \|\nabla\pd_j\fp\|_{L_q(\HS)}\\
 &\quad \leq C(\|\pd_j\bff\|_{L_q(\HS)} + \|(\pd_jg, \pd_j\bh)\|_{H^1_q(\HS)}
 + \|\lambda^{1/2}(\pd_jg, \pd_j\bh)\|_{L_q(\HS)} + \|\lambda\pd_j\bg\|_{L_q(\HS)})
 \end{aligned}\end{equation}
 for $j=1, \ldots, N-1$. To estimate $\pd_N\bu$ and $\pd_N\fp$, we start with estimating 
$\pd_Nu_N$. 
 In fact, from the divergence equations it follows that  
 $\pd_Nu_N = -\sum_{j=1}^{N-1}\pd_ju_j + g$, 
 and so 
 $$\lambda\pd_Nu_N = -\sum_{j=1}^{N-1}\lambda\pd_ju_j + \lambda g,\quad 
 \pd_N^3u_N = -\sum_{j=1}^{N-1}\pd_N^2\pd_ju_j + {\pd_N^2}g,
 $$
 which, combined with \eqref{ap:4} yields that 
 \begin{equation}\label{ap:5}\begin{aligned}
 &\|\lambda\pd_Nu_N\|_{L_q(\HS)} + \|\pd_N^3u_N\|_{L_q(\HS)} \\
 &\quad \leq C{\bigg\{} \sum_{j-1}^{N-1}(\|\pd_j\bff\|_{L_q(\HS)} + \|(\pd_jg, \pd_j\bh)\|_{H^1_q(\HS)}
 + \|\lambda^{1/2}(\pd_jg, \pd_j\bh)\|_{L_q(\HS)} + \|\lambda\pd_j\bg\|_{L_q(\HS)})\\
&\hspace{15mm} {+\|(\lambda g,\pd_N^2g)\|_{L_q(\HS)} \bigg\}}.
 \end{aligned}\end{equation}
 From the {the $N$-th component of the first} equation of equations \eqref{Ap:1} and $\dv \bu = g$, we have  
 $$\lambda u_N - \mu\Delta u_N - \mu\pd_Ng + \pd_N\fp = f_N,
 $$
 and so, we see that $\pd_N^2\fp \in L_q(\HS)$ and 
 \begin{equation}\label{ap:6}\|\pd_N^2\fp\|_{L_q(\HS)} \leq  \|\pd_Nf_N\|_{L_q(\HS)}
 + \mu\|\pd_N^2g\|_{L_q(\HS)} + 
 \|\lambda\pd_Nu_N\|_{L_q(\HS)} + \mu\|u_N\|_{H^3_q(\HS)}.
 \end{equation}
 From equations \eqref{Ap:1}, we have
 $$\begin{aligned}
 \lambda u_j - \mu\Delta u_j & = f_j - \pd_jp+ \mu\pd_jg&\quad&\text{in $\HS$}, \\
 \pd_Nu_j & = -\pd_ju_N + \mu^{-1}h_j&\quad&\text{on $\BR^N_0$}.
 \end{aligned}$$
 Differentiating the first equation of the above set of equations with respect to
 $x_N$ and setting $\pd_Nu_j=v$, we have
 $$\begin{aligned}
 \lambda v- \mu\Delta v& = \pd_Nf_j - \pd_j\pd_N\fp+ \mu\pd_N\pd_jg&\quad&\text{in $\HS$}, \\
 v & = -\pd_ju_N + \mu^{-1}h_j&\quad&\text{on $\BR^N_0$}.
 \end{aligned}$$
 Thus, setting $w = v+\pd_ju_N - \mu^{-1}h_j$, we have
 $$\begin{aligned}
 \lambda w- \mu\Delta w& = \pd_Nf_j - \pd_j\pd_N\fp+ \mu\pd_N\pd_jg
 +(\lambda-\Delta)(\pd_ju_N - \mu^{-1}h_j)&\quad&\text{in $\HS$}, \\
w & =0&\quad&\text{on $\BR^N_0$}.
 \end{aligned}$$
 Thus, by a known estimate for the Dirichlet problem, we have
 \begin{equation}\label{ap:7}\begin{aligned}
 &\|\lambda\pd_Nu_j\|_{L_q(\HS)} + \|\pd_Nu_j\|_{H^2_q(\HS)}\\
&\quad  \leq C\{\|\pd_Nf_j\|_{L_q(\HS)} + \|\pd_j\pd_N\fp\|_{L_q(\HS)}
 + \|\pd_j\pd_Ng\|_{L_q(\HS)}
 + \|\lambda\pd_ju_N\|_{L_q(\HS)} \\
&\quad  + \|\lambda h_j\|_{L_q(\HS)} + \|\pd_ju_N\|_{H^2_q(\HS)}
 + \|h_j\|_{H^2_q(\HS)}\}.
 \end{aligned}\end{equation}
 Noting that $\|\lambda^{1/2}(\pd_jg, \pd_j\bh)\|_{L_q(\HS)} \leq C(\|\lambda(g, \bh)\|_{L_q(\HS)}
 + \|(g, \bh)\|_{H^2_q(\HS)})$ and combining \eqref{ap:2} and \eqref{ap:4}--\eqref{ap:7}, we have
 \eqref{ap:3}.

Localizing the problem and using the argument above, we can show Theorem \ref{thm:3.1.1}. 

%%%%%%%%%%%%%%%%%%%%


\begin{thebibliography}{9}
\bibitem{C} G.~H.~A.~Cole, {\it Fluid dynamics}, London \& Colchester
1962
%%%%%%%%%%%%%%
\bibitem{DHMT} R.~Danchin, M.~Hieber, P.~B.~Mucha, 
and P.~Tolksdorf, {\it Free boundary problems via Da Prato-Grisvard theory},
arXiv:2011.07918v1 [math.AP] 16 Nov. 2020. 
\bibitem{Denisova}
 I.~V.~Denisova, {\it Evolution of compressible
and incompressible fluids separated by a closed interface},
Interface Free Bound. {\bf 2} (2000), no. 3, 283--312.
%%%%%%%%%%%
\bibitem{ES} Y.~Enomoto and Y.~Shibata, {\it On the
$\CR$-sectoriality and the initial boundary value problem
for the viscous compressible fluid flow}, Funkcial. Ekvac., 
{\bf 56}(3) (2013), 441--505.
%%%%%%%%%%%%%
\bibitem{Frolova1} E.~V.~Frolova, 
{\it Free boundary problem of magnetohydrodynamics}, Zap. Nauchn. Semi. POMI
{\bf 425} (2014), 149--178.
%%%%%%%%%%%%%%%%%%%
\bibitem{FS2} E.~V.~Frolova and Y.~Shibata, 
{\it On the maximal $L_p$-$L_q$ regularity theorem of the linearized 
Electro-Magnetic field equatins with interface condition}, 
Zapiski Nauchn. Sem. POMI 
{\bf 489} (2020), p.130-142. 
%%%%%%%%%%%%%%%%%%%%%%%%%%%%%
\bibitem{FS1} E.~V.~Frolova and Y.~Shibata, 
{\it Local well-posedness for the magnetohydrodynamics 
in the different two liquids case}, 
Preprint. 
%%%%%%%%%%%%%%%%%%
\bibitem{K1} P.~Kacprzyk, {\it Local free boundary problem for incompressible 
magnetohydrodynamics}, Dissertationes Mathematicae, {\bf 509} (2015), 1--52
\bibitem{K2} P.~Kacprzyk, {\it Global free boundary problem for incompressible
magnetohydrodynamics}, Dissertationes Mathematicase, {\bf 510} (2015), 
1--44.
%%%%%%%%%%%%%%
\bibitem{LL} L.~D.~Landau, E.~M.~Lifshitz, and 
L.~P.~Pitaevskii, {\it Electrodynamics of Continuous Media}, 
Second Edition,
Landau and Lifschitz Course of Theoretical Physcs Vol. 8. 
\bibitem{LS} O.~A.~Ladyzhenskaya and V.~A.~Solonnikov,
{\it Solvability of some non-stationary problems of magnetohydrodynamics for 
viscous incompressible fluids}, Trudy Mat. Inst. Steklov {\bf 59} (1960), 155--173
(in Russian). 
%%%%%%%%%%%%%%%
\bibitem{PS2010}  M.~Padula and V.~A.~Solonnikov, 
{\it On the local solvability of free boundary problem for the 
Navier-Stokes equations,} J. Mathematical Sciences {\bf 170} (2010),  
522--552. 
\bibitem{PS} M.~Padula and V.~A.~Solonnikov, 
{\it On the free boundary problem of Magnetohydrodynamics},
J. Mathematical Sciences, {\bf 178} (3)(2011), 313--344.
%%%%%%%%%%%%
\bibitem{Pr-Sim} J.~Pruess and G.~Simonett, {\it Moving Interfaces and Quasilinear
Parabolic Evolution Equations,} Birkhauser Monographs in Mathematics,
2016, ISBN:978-3-319-27698-4.
%%%%%%%%%%%%%%%
%\bibitem{SS11} M.~Sahaev and V.~A.~Sokonnikov, {\it On some
%stationary problems of magnetohydrodynamics in general domains},
%Apa Nauchn. Sem. POMI {\bf 397} (2011), 126--149.
%%%%%%
\bibitem{ST} M.~Sermange and R.~Temam, {\it Some mathematical questions related to the
MHD equations}, Comm. Pure Appl. Math. {\bf 36} (5) (1983), 635--664.
%\bibitem{S1} Y.~Shibata, {\it Local well-posedness of free surface problem 
%for the Navier-Stokes equations in a general domain}, 
%Discrete and Continuous Dynamical Systems Series S
%\bf 9} (1) (2016) February, 315--342. 
\bibitem{S1} Y.~Shibata, {\it On the $\CR$-boundedness of solution operators 
for the Stokes equations with free boundary condition},
Differential Integral Equations, {\bf 27}(3-4) (2014), 313--368.
doi:10.3934/cpaa.2018081. 
\bibitem{S2} Y.~Shibata, {\it On the $\CR$-bounded solution 
operator and the maximal $L_p$-$L_q$ regularity of the Stokes
equations with free boundary condition}, 
Y. Shibata and Y. Suzuki (eds.), Mathematical Fluid Dynamics, Fresent and 
Future, Springer Proceedings in Mathematics \& Statistics 183, 
pp.203--285. Springer Japan 2016, DOI 10.1007/978-4-431-56457-7-9
\bibitem{S2} Y.~Shibata, {\it On the local wellposedness of free boundary
problem for the Navier-Stokes equations in an exterior domain}, 
Commun. Pure Appl. Anal. {\bf 17} (4) (2018), 1681--1721.
doi:10.3934/cpaa.2018081.
\bibitem{S20} Y.~Shibata, {\it $\CR$ boundedness, Maximal Regularity and Free Boundary
Problems for the Navier-Stokes Equations}, G.~P.~Galdi and Y.~Shibata (eds.),
Mathematical Analysis of the Navier-Stokes Equations, Lecture Notes in 
Mathematics 2254, Springer Nature Switzerland AG 2020, https://doi.org/10.1007/978-
3-030-36226-3-3
%\bibitem{SS} 
%Y.~Shibata and H.~Saito, {\it 
% Global Well-posedness for Incompressible-Incompressible Two Phase Problem}, 
 %In: T. Bodnar, G.P. Galdi, S. Necasova (eitors): Fluids under Pressure, 
 %Book Series: Advances in Mathematical Fluid %Mechanics, Springer (2020)
%%%%%%%%%%
%%%%%%%%%%%%%%%%%%%%%%%%%%
%\bibitem{SS10} Yoshihiro Shibata and Senjo Shimizu, 
%{\it   On a free boundary problem for the Navier-Stokes equations}, 
%Differential Integral Equations, {\bf 20}(3) (2007), 
%241--276.
%\bibitem{SS13} Yoshihiro Shibata and Senjo Shimizu,
%{\it On the $L_p$-$L_q$ maximal regularity of the Neumann problem for 
%the Stokes equations in a bounded domain}, 
%J. Reine Angew. Mat., {\bf 615} (2008), 157--209.

%%%%%%%%%%%%%%%%%
%%%%%%%%%%%%
%\bibitem{Solonnikov1} V.~A.~Solonnikov, 
%{\it On boundary problems for linear parabolic system of 
%differential equatuions of general type}, 
%Trudy Mat. Inst. Steklov., {\bf 83} (1965), 
%3--163 (in Russian); English transl.:
%Proc. Steklov Inst. Math. {\bf 83} (1967), 1--184.
%%%%%%%%%%%%%%%%%%%%%%%
\bibitem{SZ1} Y.~Shibata and W.~Zaj\c aczkowski, {\it On local solution to a free boundary problem
for incompressible visous magnetohydrodynamics in the $L_p$ - approach,
Part 1}..  Preprints in 2017.

\bibitem{SZ2} Y.~Shibata and W.~Zaj\c aczkowski, {\it On local solution to a free boundary problem
for incompressible visous magnetohydrodynamics in the $L_p$ - approach,
Part 2}..  Preprints in 2018
%%%%%%%%%%%%%
\bibitem{Sol15} 
\newblock V.~A.~Solonnikov,
\newblock {\it Lectures on evolution free boundary problems: 
Classical solutions},
\newblock \emph{Mathematical aspects of evolving interfaces (Funchal, 2000),} 
Lecture Notes in Math., 1812, Springer, Berlin, 2003, 123--175. 

%\bibitem{Sol14-1} V.~A.~Solonnikov, {\it $L_p$-theory of free boundary problems of 
%magnetohydrodynamics in simply connected domains}, Proc St. Petersburg
%Mathematical Society, {\bf 15} (2014), 245--270
\bibitem{Sol14-2} V.~A.~Solonnikov,
{\it  $L_p$-theory of free boundary problems of magnetohydrodynamics in simply connected domains}, Proc St. Petersburg
Mathematical Society, {\bf 15} (2014), 245--270;  Transl. 
Amer. Math. Soc. Ser. 2, 232, Amer. Math. Soc., Providence, RI, 2014.
\bibitem{Sol14-3} V.~A.~Solonnikov, {\it $L_p$-theory of free boundary problems of 
magnetohydrodynamics in multi-connected domains}, Ann. Univ. Ferrara
 (2014)60:263--288, 
DOI 10.1007/s11565-01400205-4

\bibitem{SE1} V.~A.~Solonnikov and E.~V.~Frolova,
{\it Solvability of a free boundary problem of magnetohydrodynamics
in an infinite time intergal},  Zap. Nauchn. Semi. POMI
{\bf 410} (2013), 131--167.
%%%%%%%%%
%\bibitem{Weis} L.~Weis, 
%{\it Operator-valued Fourier multiplier 
%theorems and maximal $L_p$-regularity}. 
%Math. Ann. {\bf 319} (2001), 735--758.
\bibitem{Y} N.~Yamaguchi, {\it On an existence theorem of global strong solutions to the 
magnetohydrodynamic system in three-dimensional exterior domains}, Diff. Int. Eqns., {\bf 19} (8) (2006), 
919--944.
\end{thebibliography}
\end{document}